\documentclass[12pt]{amsart}

\setlength{\textheight}{23cm}
\setlength{\textwidth}{16cm}
\setlength{\topmargin}{-0.8cm}
\setlength{\parskip}{0.3\baselineskip}
\hoffset=-1.4cm

\usepackage{amsmath,amsthm,amsfonts,amssymb}

\theoremstyle{plain}
\newtheorem{thm}{Theorem}[section]
\newtheorem{prop}[thm]{Proposition}
\newtheorem{lem}[thm]{Lemma}
\newtheorem{cor}[thm]{Corollary}

\theoremstyle{definition}
\newtheorem{definition}[equation]{Definition}

\newtheorem{remark}[equation]{Remark}
\newtheorem{dfn}[thm]{Definition}

\theoremstyle{remark}

\numberwithin{equation}{section}

\renewcommand{\phi}{\varphi}

\newcommand{\GL}{{\rm GL}}

\DeclareMathSymbol{\sdp}{\mathbin}{AMSb}{"6F}

\newcommand{\diag}{\mathrm{diag}}

\newcommand\bb[1]{{\text{\bf#1}}}

\newcommand\bbz{\mathbb{Z}} 

\newcommand\bbr{\mathbb{R}} 
\newcommand\bbc{\mathbb{C}}

\newcommand\bbp{\bb{P}}
\newcommand\bbi{\bb{I}}


\renewcommand\gg{\mathfrak{g}}
\newcommand\gh{\mathfrak{h}} 
 
\newcommand\gu{\mathfrak{u}} 
\newcommand\gs{\mathfrak{s}}

\newcommand\ca{\mathcal}
\newcommand\cagm{{\ca G \ca M}_{n,\delta_0} }

\begin{document}
\baselineskip=16pt
\title[Grassmannian framed bundles and generalized parabolic structures] {Grassmannian framed bundles and generalized parabolic structures}

\author[U. Bhosle]{Usha Bhosle}

\address{School of Mathematics, Tata Institute of Fundamental Research,
Homi Bhabha Road, Bombay 400005, India}

\email{usha@math.tifr.res.in}

\author[I. Biswas]{Indranil Biswas}

\address{School of Mathematics, Tata Institute of Fundamental Research,
Homi Bhabha Road, Bombay 400005, India}

\email{indranil@math.tifr.res.in}

\author[J. Hurtubise]{Jacques Hurtubise}

\address{Department of Mathematics, McGill University, Burnside
Hall, 805 Sherbrooke St. W., Montreal, Que. H3A 2K6, Canada}

\email{jacques.hurtubise@mcgill.ca}

\subjclass[2000]{14H60, 14F05}

\keywords{Grassmannian framed bundle, parabolic structure,
Hitchin-Kobayashi correspondence}

\date{}

\begin{abstract}
We build compact moduli spaces of Grassmannian framed bundles over a 
Riemann surface, essentially replacing a group by a 
bi-equivariant
compactification. We do this both in the algebraic and symplectic 
settings, and prove a Hitchin-Kobayashi correspondence between the two. 
The spaces are universal spaces for parabolic bundles (in the sense that all of the moduli can be obtained as quotients), and the reduction to 
parabolic bundles commutes with the correspondence. An analogous 
correspondence is outlined for the generalized parabolic bundles of Bhosle.

\end{abstract}

\maketitle

\section{Introduction}

The Hitchin-Kobayashi correspondence, which 
establishes on a compact K\"ahler (or even more general) manifold a 
bicontinuous 
correspondence between on one hand unitary bundles equipped with an irreducible 
connection (and possibly auxiliary fields) satisfying a suitable curvature 
condition, and on the other, stable holomorphic bundles or stable 
holomorphic pairs, triples, etc, is by now a well established paradigm, 
proven over the years in increasing degrees of generality, by 
Narasimhan-Seshadri \cite{NS}, Mehta-Seshadri \cite{MS}, 
Donaldson \cite{Do1, Do2}, Uhlenbeck-Yau \cite{UY}, Simpson 
\cite{Si}; good references and an overview can be found in 
L\"ubke-Teleman \cite{LuTe}. This correspondence has been invaluable, 
both 
for understanding the holomorphic moduli, and in understanding the 
moduli of connections satisfying the curvature condition (for example, 
anti-self-duality in complex dimension two).

In dimension one, the curvature condition is generally one of flatness, 
or, for non-zero degree, of constant central curvature; the particular 
correspondences that concern us first are the classical ones, which 
served as models for the others:

\begin{itemize} \item The
correspondence on a closed Riemann surface between stable holomorphic vector bundles and flat (or 
constant central curvature) unitary connections. The holomorphic stable bundles 
then get linked to unitary representations of the fundamental group of 
the surface. See Narasimhan-Seshadri \cite{NS}, Donaldson \cite{Do1}.

\item The correspondence on a Riemann surface with marked points between stable holomorphic 
vector bundles with 
parabolic structures at marked points, and flat unitary connections on 
the complement of the marked points with fixed conjugacy classes for the 
holonomy around the marked points. See Mehta-Seshadri \cite{MS}, Biquard 
\cite{Bi}, Poritz \cite{Po}.
\end{itemize}

For these cases, there are two ``universal", moduli spaces, one holomorphic, one symplectic, which in some sense 
contain all of the parabolic spaces, in  that the parabolic moduli can be 
obtained as quotients of these spaces in a uniform way. The first, on the 
algebraic or holomorphic side, is the space of framed bundles, i.e., the 
space of pairs of (bundles, trivializations at the marked points); the 
second, on the unitary or symplectic side, are the extended moduli 
spaces defined by Jeffrey \cite{Je}. By rights, these spaces should also 
correspond under the Hitchin-Kobayashi correspondence, but so far this 
has not been clear.

Both of these spaces also present the difficulty of being inherently 
non-compact. On the algebraic side, there is a compactification, but by 
sheaves \cite{HL}; on the symplectic side, as well as non-compactness, there is the problem that the 
symplectic form 
can degenerate. (One can get a compact space on the symplectic side, at the price of considering quasi-Hamiltonian structures, as in \cite{AMM}.) 

We will remedy all of these problems by replacing the framings 
with their graphs in a Grassmannian modelled on the Grassmannian of 
$n$-planes in $\bbc^{2n}$. This latter Grassmannian is a particularly nice 
smooth compactification of $\GL(n)$; in particular it is equivariant 
under both the left and right actions of $\GL(n)$, represented by 
the embeddings of $\GL(n)$ into $\GL(2n)$ as subgroups 
$\GL(n)\times\{\bbi\}, \{\bbi\}\times \GL(n)$. 

It is the purpose of this paper to construct the moduli spaces $\cagm$ of pairs
(bundles, Grassmannian framing), as well as the analogous spaces $GM_{n, \delta_0}$ on 
the symplectic side. We will show:
\begin{itemize} 
\item that they are compact; on the holomorphic side, this space 
only involves bundles, and on the symplectic side, it 
is symplectic 
where it is reasonable to expect this (i.e., over the locus where a moment map is a submersion);
\item that there is a bijective Hitchin-Kobayashi correspondence $C: GM_{n, \delta_0}\rightarrow \cagm$ relating them;
\item that on the symplectic side, there is a Hamiltonian action of $U(n)^\ell$, and on the holomorphic side, a holomorphic action of $Gl(n,\bbc)^\ell$ such that one can obtain the various moduli   spaces of parabolic bundles from our spaces 
as either  symplectic quotients under $U(n)^\ell$ or as holomorphic quotients under $Gl(n,\bbc)^\ell$; 
this process commutes with the  Hitchin-Kobayashi correspondence; see the end of section 4 below.
\item that on the symplectic side again, the moment map for the $U(n)^\ell$-action is the conjugacy class of a logarithm of the holonomy around the puncture. Our construction will give us all representations of the fundamental group of the punctured Riemann surface,  along with some framing information, with no redundancy, apart from the cases where the holonomy around the puncture is minus the identity, in which cases there are some extra spaces which are added. 
\end{itemize}

It is the relationship to parabolic bundles that explains the use of the Grassmannian compactification of $Gl(n,\bbc)$. There are, as recent work of Martens and Thaddeus \cite{MaTh} has emphasized quite beautifully, quite a few viable compactifications of $Gl(n,\bbc)$ to choose from. Apart from the convenience of using an explicit space, there is a good geometric reason for choosing this particular compactification. This is best seen from the symplectic point of view. Indeed, the image of the moment map for the action of $U(n)$ on the Grassmannian that extends the left action on $Gl(n,\bbc)$ is a $U(n)$ orbit of the diagonal matrices with entries in $[-1/2, 1/2]$; the Kirwan polytope for the moment map is the set $-1/2\leq \lambda_1\leq \lambda_2\leq\cdots \leq \lambda_n\leq1/2$. Normalizing by $2\pi \sqrt{-1}$, this corresponds naturally to the fundamental alcove of $U(n)$, and so to the set of conjugacy classes of holonomies around the puncture, the only redundancy being that $\pm 1/2$ represent the same holonomy of $-1$. In other words this Grassmannian gives through its moment map all the possible weights for a parabolic moduli space, and only once, again with a caveat about $\pm 1/2$. Results of Alexeev and Brion \cite{AB} tell us that the Kirwan polytope essentially determines the bi-equivariant compactification of $Gl(n,\bbc)$, confirming that the Grassmannian is in essence the right choice. In some sense, the moduli spaces we construct are, in essence, the ``smallest" compact symplectic/holomorphic spaces which ``contain" all unitary representations of the fundamental group of the  punctured curve, as unitary/general linear quotients.

If, instead of considering the Grassmannian 
of $n$-planes in the direct sum $E_p\oplus \bbc^n$ of the fiber $E_p$ of 
a 
vector bundle $E$ and a fixed copy of $\bbc^n$, one now chooses a pair of  points $p$ and $q$, and considers the 
Grassmannian of 
$n$-planes in $E_p\oplus E_q$,  we obtain an example of the generalized parabolic 
bundles of Bhosle \cite{Bh2}; the generic element of this space corresponds  to bundles over the nodal curve obtained by identifying $p$ and $q$, by a simple glueing.    Again, one can consider this both from  the
symplectic and holomorphic points of view, with a Hitchin-Kobayashi 
correspondence connecting them; also, one can construct from both 
points of view
the moduli of bundles on a nodal curve. We summarise this in section 5 of this paper. Indeed, it was the question of finding out what the Narasimhan-Seshadri theorem looked like for nodal curves which was the initial motivation for this work.

We note in passing that another set of universal spaces for parabolic 
moduli, this time with trivializations parametrized by a torus instead of $\GL(n,\bbc)$, was given by Hurtubise, Jeffrey and Sjamaar in \cite{HuJeS}; the quotient there is by a torus, but one must include some non-locally free sheaves in the space.

There should be equivalent correspondences for arbitrary reductive 
groups; these should be particularly interesting, as the bi-equivariant 
compactifications of these groups have also recently been constructed, using 
bundles on rational curves, by Martens and Thaddeus \cite{MaTh}. We will 
return to these elsewhere.

Section \ref{sec2} is devoted to the construction of the moduli of 
Grassmannian 
framed bundles; Section \ref{sec3} recalls Jeffrey's construction of the 
extended 
moduli spaces, and uses it to construct a Grassmannian analog. In 
Section \ref{sec4} we prove the correspondence. In Section \ref{sec5}, 
we show how the same ideas extend to Bhosle's generalized parabolic 
structures. Section \ref{sec6} gives examples.

\medskip
\noindent
\textbf{Acknowledgements.}\, The authors would like to thank the Kerala 
School of Mathematics for 
their hospitality during the discussions which launched this project.

\section{Moduli of Grassmannian-framed bundles}\label{sec2}

\subsection{ Definitions and notation}
Let $X$ be an irreducible smooth complex projective curve of genus $g$, 
with $\ell$ marked (ordered) points $p_1,\cdots ,p_\ell$.
Throughout this paper, $E$ will denote a 
vector bundle of rank $n$ over $X$. Initially, the degree
of $E$ is $\delta_0$ with $-[(\ell n-1)/2]\, \leq\, \delta_0
\, \leq\, [(\ell n-1)/2]$, where $[t]$ denotes the integral part of $t$.
(This rather odd choice of range for the degrees is due to an eventual 
link to bundles with parabolic structure; in the course of the moduli 
construction, this degree will be increased into a stable range, as the 
result of twisting by a line bundle of positive degree, as is usual for 
moduli constructions.)
The Grassmannian parametrizing linear subspaces 
of $E_{p_i}\oplus \bbc^n$ of dimension $n$ will be denoted by
${\rm Gr}_n( E_{p_i}\oplus \bbc^n)$; if a
subspace of $E_{p_i}\oplus \bbc^n$
is in general position, it is the graph of a trivialization of $E$
at $p_i$, or more generally, the graph of a linear map from
$\bbc^n$ to $E_{p_i}$. Throughout, $g_i$ will denote
an element of ${\rm Gr}_n( E_{p_i}\oplus \bbc^n)$.
Set $\vec{g}\,=\, (g_1,\cdots ,g_\ell)$.

There are two numbers associated to an element $g_i$ of the 
Grassmannian ${\rm Gr}_n( E_{p_i}\oplus \bbc^n)$:
\begin{align} s_i&= \dim(g_i\cap E_{p_i})\\ t_i&= \dim(g_i\cap 
\bbc^n) = \dim(E_{p_i}/\Pi(g_i))\, ;\end{align}
here $\Pi(g_i)$ is the projection of $g_i$ to $E_{p_i}$. We note that 
$g_i$ is the graph of a trivialization if $s_i= t_i = 0$; 
there is the obvious bound $s_i+t_i\leq n$. 

 We call the pair $(E,\vec{g})$ a {\it Grassmannian framed vector 
bundle}. For a subbundle 
$E'\subset E$, let $g'_i:= g_i \bigcap (E'_{p_i}\oplus \bbc^n)$. 
Define
$$
s'_i\,:=\, \dim(g_i\cap E'_{p_i}) \, ~\,\text{~and~}\, ~\, t'_i\, 
:=\, \dim(E'_{p_i}/(\Pi(g_i)\cap E'_{p_i}))\, .
$$

\begin{dfn}
 We call $(E,\vec{g})$ {\it semistable} if the following conditions 
hold: 
\begin{enumerate}
\item the inequality
\begin{equation}\label{1}
 \sum_{i=1}^{\ell}s_i\, \leq \, \frac{\ell n}{2} - \delta_0;
\end{equation}

\item the inequality
\begin{equation}\label{2}
\sum_{i=1}^{\ell}t_i\,\leq \, \frac{\ell n}{2} + \delta_0;
\end{equation}

\item for every subbundle $E'$ of $E$,
\begin{equation}\label{3}
0\, \leq\, \frac{\delta_0}{n} - 
 \frac{\delta'_0}{n'} + \sum_i\Bigl[(\frac{n' -s'_i}{n'}) - ( 
\frac{n'-s'_i-t'_i+t_i}{n}) \Bigr]\, ,
\end{equation}
where $n'$ and $\delta'_0$ are the rank and degree of $E'$ respectively,
and if in \eqref{3} the equality holds, then 
\begin{equation}\label{4}
0\,\leq\, \Bigl[\frac{\delta_0}{n} - \frac{\delta'_0}{n'} 
\Bigr]\Bigl[-\frac{\delta_0}{n} - \frac{1}{2} + \sum_i( 
\frac{n'-s'_i-t'_i+t_i}{n})\Bigr]\, .
\end{equation}
\end{enumerate}
The pair $(E,\vec{g})$ is called {\it stable} if in addition one has strict 
inequalities in \eqref{1}, \eqref{2} and, when there is equality in \eqref{3}, in \eqref{4}.
\end{dfn}
The first two conditions arise as the stability criteria for a $\bbc^*$ action, and will be referred to as the {\it $\bbc^*$ stability conditions.}
The last two arise from the action of $Sl(V)$ on a space $V$, and will be referred to as the {\it $Sl(V)$ stability conditions.}
 
\subsection{The moduli construction}\label{moduli}

The moduli of pairs $(E,\vec{g})$, for which the planes correspond to 
framings was first examined by Seshadri in \cite{Se}, and considered 
more extensively and in a more general context by Huybrechts and Lehn 
in \cite{HL}. We adapt some of their notation and results to define a 
moduli space $\cagm\,=\, {\ca G\ca M}_{n,\delta_0, p_1,\cdots ,p_\ell}$, 
where $n$ is 
the rank. We begin by an essentially linear-algebraic construction which encodes 
the pair $(E,\vec{g})$ with $E$ of fixed rank $n$ and degree 
$\delta$. This follows a well-established pattern set, to 
name some, by Gieseker \cite{Gi}, Bhosle \cite{Bh1}, and Huybrechts and 
Lehn \cite{HL}. 

We shall first show that there exists an integer 
$e= e(n,g,\ell)$ such that for any semistable Grassmannian framed bundle $(E,\vec{g})$ 
of degree $\delta\ge e, \ E$ is generated by global sections and 
$H^1(X,E)=0$.

If $L$ is a line bundle on $X$, then fixing isomorphisms $(E\otimes L)_{p_i} \cong E_{p_i}$, we get a Grassmannian framed structure $\vec{g}_L$ on $E\otimes L$ induced from that on $E$. 
We call a Grassmannian framed bundle $(E,\vec{g})$ of rank $n$, degree $\delta$ 
\textit{pseudo semistable} if for every subbundle 
$E' \subset E$ of rank $n'$ and degree $\delta'$, we have 
\begin{equation}
0\quad \leq\quad \frac{\delta}{n} - \frac{\delta'}{n'} +\sum_i 
\Bigl[(\frac{n' -s'_i}{n'}) - ( \frac{n'-s'_i-t'_i+t_i}{n})\Bigr]\, .
\end{equation}
Then the Grassmannian framed bundle $(E\otimes L,\vec{g}_L)$ is pseudo 
semistable if and only if $(E,\vec{g})$ is pseudo semistable. A 
semistable Grassmannian framed bundle is pseudo semistable, but the 
converse may not be true.

\begin{lem}\label{bounded}
Let $(E,\vec{g})$ be pseudo semistable. If $\frac{\delta}{n} > 2g-2 + (n-1)\ell$, then 
$H^1(X,E)=0$. If $\frac{\delta}{n} > 2g-1 + (n-1)\ell$, then $E$ is generated by global sections. 
\end{lem}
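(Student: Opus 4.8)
The plan is to translate both vanishing statements, via Serre duality, into the nonexistence of quotient line bundles of $E$ of small degree, and then to exclude such quotients using the pseudo semistability inequality. For the first assertion, Serre duality gives $H^1(X,E)\cong H^0(X,E^*\otimes K_X)^*$, where $K_X$ is the canonical bundle, so it suffices to show $H^0(X,E^*\otimes K_X)=0$. A nonzero element of this space is a nonzero sheaf homomorphism $E\to K_X$; on the smooth curve $X$ its image has the form $K_X(-D)$ for an effective divisor $D\ge 0$, so it exhibits a quotient line bundle $M=K_X(-D)$ of $E$ with $\deg M\le 2g-2$, whose kernel $E'$ is a subbundle of rank $n'=n-1$. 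Thus it is enough to prove that, under the degree hypothesis, $E$ admits no quotient line bundle of degree $\le 2g-2$. For the second assertion I would use the standard criterion that $E$ is generated by global sections once $H^1(X,E(-x))=0$ for every $x\in X$, since then $H^0(X,E)\to E_x$ is surjective for all $x$; running the same Serre-duality argument for $E(-x)$ produces instead a quotient line bundle of $E$ of degree $\le 2g-1$, so here it suffices to exclude quotient line bundles of degree $\le 2g-1$.

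Next I would convert the pseudo semistability inequality into a lower bound for the degree of a quotient line bundle. Writing $M$ for such a quotient, $E'=\ker(E\to M)$ for its kernel (a subbundle of rank $n'=n-1$ and degree $\delta'=\delta-\deg M$), and applying the pseudo semistability inequality to $E'$, one gets $\frac{\delta'}{n'}\le \frac{\delta}{n}+\sum_i a_i$ with $a_i=\frac{n'-s'_i}{n'}-\frac{n'-s'_i-t'_i+t_i}{n}$. Multiplying by $n'=n-1$ and using $\deg M=\delta-\delta'$ together with $n-n'=1$ yields
\begin{equation*}
\deg M\,\ge\,\frac{\delta}{n}-(n-1)\sum_{i}a_i\,.
\end{equation*}

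Everything therefore comes down to the estimate $\sum_i a_i\le \ell$, which is the one computational point that needs care and is, I expect, the main (though still elementary) obstacle. Rewriting a single term as
\begin{equation*}
a_i\,=\,\frac{n'-s'_i}{n(n-1)}+\frac{t'_i-t_i}{n}
\end{equation*}
and using the elementary constraints $0\le s'_i\le n'=n-1$, $0\le t'_i\le n-1$ and $t_i\ge 0$ coming from the definitions of $s'_i,t'_i,t_i$, each term satisfies $a_i\le \frac{n-1}{n(n-1)}+\frac{n-1}{n}=1$, so $\sum_i a_i\le \ell$ and hence $\deg M\ge \frac{\delta}{n}-(n-1)\ell$. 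Under the first hypothesis $\frac{\delta}{n}>2g-2+(n-1)\ell$ this forces $\deg M>2g-2$, contradicting the existence of a quotient line bundle of degree $\le 2g-2$ and giving $H^1(X,E)=0$; under the second hypothesis $\frac{\delta}{n}>2g-1+(n-1)\ell$ the same inequality gives $\deg M>2g-1$, excluding the quotient line bundles of degree $\le 2g-1$ and hence proving global generation. Both parts thus run in parallel, the only difference being the single extra unit of degree contributed by the twist $\mathcal{O}_X(x)$, which is precisely the gap between the two hypotheses.
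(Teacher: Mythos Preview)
Your proof is correct and follows essentially the same route as the paper: produce a nonzero map $E\to K_X$ via Serre duality, apply the pseudo semistability inequality to its kernel (a rank $n-1$ subbundle), bound each summand $a_i$ by $1$ so that $\sum_i a_i\le \ell$, and derive the contradiction $\mu(E)\le 2g-2+(n-1)\ell$; the global generation statement then follows from $H^1(X,E(-x))=0$ exactly as you indicate. The only cosmetic difference is that the paper bounds $a_i\le 1$ using $t'_i\le t_i$ (so the second term $\frac{t'_i-t_i}{n}\le 0$), whereas you use the cruder but equally valid $t'_i\le n-1$ and $t_i\ge 0$.
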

\begin{proof}
Let $B_i= (\frac{n' -s'_i}{n'}) - ( \frac{n'-s'_i-t'_i+t_i}{n}), 
B= \sum_i B_i$. Then 
$$B_i= \frac{1}{nn'}(n'(n-n')-s'(n-n')-n'(t-t'))= \frac{(n-n')}{n} \frac{(n'-s')}{n'} -\frac{t-t'}{n} \le 1\, .$$ 
Hence $B \le \ell$.

If $H^1(E)\neq 0$, there exists a nonzero homomorphism 
$f: E \longrightarrow K$. Let $$\mu(E)\,=\, \frac{{\rm degree}\ E}{{\rm 
rank} \ E}\, .$$
Applying the pseudo semistability condition to the kernel of $f$ we have 
$$\mu(E) \le 2g-2 +(n-1)B \,\le\, 2g-2 +(n-1)\ell\, ,$$ i.e., $\mu(E) 
\le 
2g-2 + (n-1)\ell$. 
This contradicts $\frac{\delta}{n} > 2g-2 + (n-1)\ell$. 

For global generation of $E$ it suffices to have $H^1(X, E(-x))=0$ for all $x\in X$. 
Since $\mu(E(-x))= \mu(E)-1$, the result follows from the first part. 
\end{proof}

Let ${\mathcal O}_X(1)$ denote a fixed line bundle of degree $\ell$ over 
$X$. Fix a sufficiently large positive integer $k'$.
For a vector bundle $E_0$ on $X$, the Hilbert polynomial of 
$$E\,=\, E_0(k')\, :=\, E_0\otimes {\mathcal O}_X(k')$$ 
with $E_0$ of rank $n$ and degree $\delta_0$ is 
$$P_{k'}(t)\,=\, \chi(E_0({k'}+t))\,=\,nt+\delta_0+nk+n(1-g)\,=\,nt+ \delta+n(1-g)\, ,$$
where 
\begin{equation} k= k' \ell.\label{defk} \end{equation}
Set 
\begin{equation}p= P_{k'}(0)\,=\, \delta_0+kn+n(1-g)\, .\label{defp}\end{equation}

Let 
$$Quot\, := \,Quot({\mathcal O}_X^p, P_{k'}(t))$$ 
be the Quot scheme parametrizing all the quotients $q: {\mathcal O}_X^p 
\longrightarrow E$ such that
$E$ is a coherent sheaf on $X$ with Hilbert polynomial $P_{k'}(t)$. 
There exists a universal family ${\mathcal E}\longrightarrow 
Quot\times X$ and a 
(universal) quotient map 
$$ {\mathcal O}_{Quot\times X}^p\longrightarrow {\mathcal E}\, ,$$
such that for any $q\,\in\, Quot$, the restriction
${\mathcal O}_X^p\longrightarrow {\mathcal E}\vert_{\{q\}\times X}$ 
is represented by $q$.

 Let $$R\,\subset\, Quot$$ be the subset of $Quot$ 
consisting of
points $q\, \in\, Quot$ corresponding to sheaves ${\mathcal E}_q$ 
satisfying the following:
\begin{enumerate}
\item $E_q$ are vector bundles (generically) generated by sections, and
\item $H^0 (X, {\mathcal E}_q) \cong {\bbc}^{p}$
(so $H^1 (X,{\mathcal E}_q) = 0$ by the Riemann-Roch theorem).
\end{enumerate}
For sufficiently large $k$, the set $R$ contains the subset of
$Quot$ corresponding to all $E$ such that there is a Grassmannian 
framing $\vec{g}$ on $E$ satisfying the condition that the
pair $(E,\vec{g})$ is semistable (Lemma \ref{bounded}).
It is well known that $R$ is a Zariski open subset of $Quot$ \cite{Gi}.

 Let $p_R: R \times X \longrightarrow R$ be the
projection. Define 
$${\mathcal E}_{p_i}\,=\, (p_R)_{*} ({\mathcal E} \mid_{R \times p_i}) 
\,\longrightarrow \, R\, .$$ 
Let ${\bbc}^n_R$ be the trivial vector bundle of rank $n$ on $R$.
Let 
$${\rm Gr}_i({\mathcal E}_{p_i}\oplus {\bbc}^n_R) \longrightarrow R $$ 
be the Grassmannian bundles over $R$ 
whose fibers at $E$ are isomorphic to the Grassmannians of $n$-planes 
in $E_{p_i}\oplus{\mathbb C}^{n}$.
Let
$$
{\widetilde R}\, :=\, {\rm Gr}_1({\mathcal E}_{p_1}\oplus {\bbc}^n_R)
\times_R\cdots \times_R {\rm Gr}_{\ell}({\mathcal E}_{p_{\ell}}\oplus 
{\bbc}^n_R)\, \longrightarrow\, R
$$
be the fiber product. A point of $\widetilde R$ corresponds to a point 
$q$ of $R$, that is, a vector bundle $E$ and a point in the fiber of 
${\rm Gr}_i({\mathcal E}_{p_i}\oplus {\bbc}^n_R)$ for all $p_i$.

The group ${\rm GL}(p)\times {\rm GL}(n)^\ell$ acts on $Quot$ 
preserving $R$, and the action on $R$
lifts to $\widetilde{R}$.  Our moduli space $\cagm$ is 
a GIT-quotient of $\widetilde{R}$ by  ${\rm SL}(p)\times \bbc^*$, 
where $\bbc^*$ acts by multiples of the identity on the $\bbc^n$ and on the $\bbc^p$ factors. 
To construct the quotient, we use
an injective affine morphism of $\widetilde{R}$ into a suitable 
projective variety; this morphism will be described now.

Set $V \,=\, \bbc^p$. Since $\dim H^0(X,E)\,=\, p$, the 
vector bundle $E$ is then a 
quotient of the trivial vector bundle $V_X \,=\, V\otimes {\ca O}_X$ of 
rank $p$ on $X$. Fixing a quotient homomorphism
$V_X \, \twoheadrightarrow\,E$, we consider the determinant map
on sections: 
\begin{equation}\label{h}
h\, :\, \Lambda^n(V)\,\longrightarrow \, H^0(X,\, \det (E))\, .
\end{equation}

Let $\bbp({\ca U})$ be the projective Picard bundle over 
$\text{Pic}^d(X)\,=:\, A$, where $d\,=\, \delta_0+ nk$. We recall
that $\bbp({\ca U})$
parametrizes isomorphism classes of pairs consisting of a line bundle
of degree $d$ and a nonzero section of it.
As our determinant bundles $\det(E)$ lie in the above
component $A$ of the Picard group, the homomorphism $h$ in \eqref{h}
gives an element $\alpha$ of the projective bundle
$$
\alpha\, \in\, 
P := \bbp(Hom (\Lambda^n(V), {\ca U} ))\, \longrightarrow
\, A\, .
$$
This element $\alpha$ encodes the bundle $E$
\cite{Gi}, \cite{Bh1}, \cite{HL}. The 
Grassmannian framing $g_i$, in turn, defines under the evaluation map on 
sections of $E$ at $p_i$ a natural linear subspace in $V\oplus\bbc^n$ of
codimension $n$, and so 
$\vec{g}$ gets encoded as an $\ell$-tuple $\beta=(\beta_1,\cdots,\beta_{\ell})$ 
of elements $\beta_i$ of the Pl\"ucker embedding of the 
Grassmannian, meaning
$$\beta_i \,\in\,{\rm Gr}_n(V^*\oplus \bbc^{n}) \,\subset\, Q 
\,=\, \bbp(\Lambda^n(V^*\oplus\bbc^{n}))\, .$$ 
We note that the center of ${\rm GL}(p)$ acts non-trivially here on the $\beta_i$, even after projectivization; this is in contrast to many other moduli problems, such as those for parabolic bundles.

It is easy to see that associating the pair $(\alpha, \beta)$ to 
$(E,\vec{g})$ produces a morphism 
$$f: \widetilde{R} \longrightarrow P\times Q^{\ell}\, ,$$
lying over the morphism $f_R\,:\, R\,\longrightarrow\, P$ defined 
by $E \longmapsto \alpha$. 

 The set $f_R(R)$ of elements of $P$ is described in
\cite{Gi}, \cite{Bh1}, \cite{HL}; basically, under the evaluation at 
any point on the curve of the elements of $\ca U$, the element 
$\alpha_p\,\in\,\Lambda^n(V)^*$ that one gets must be a (non-zero) indecomposable 
element. Similarly, the elements $\beta_i$ must be indecomposable, 
meaning, they define an element of the Grassmannian of $p$-planes 
in 
$\bbc^{p+n}$ under 
the Pl\"ucker map. In addition, the elements $\alpha$ and $\beta_i$ must 
be 
compatible in the sense that the kernel of $\alpha_{p_i}$ must lie in 
the kernel of $\beta_i$. 

Let $Z$ be the Zariski closure of $f(\widetilde{R})$ in $P\times 
Q^{\ell}$. 

As usual, we need a polarization on $Z$. As in \cite{HL}, we obtain 
an ample line bundle ${\ca O}(1)_P$ on $P$ which is in the twist 
of the
lift of a very ample line bundle on $A$ by a line bundle that restricts 
to the standard positive generating bundle on each fiber 
(which is a projective space). We also have the standard ${\ca O}(1)_Q$ 
on $Q$. For a positive rational number $\eta\, =\, \nu/\mu$, where
$\nu$ and $\mu$ are integers, consider the polarization $\ca 
O(1)_P^{\otimes \mu} \boxtimes\left(\boxtimes_i {\ca 
O}(1)_Q^{\otimes\nu}\right)$.

As we have made a choice of a basis of $V$, we then quotient, taking the 
semi-stable elements. The quotient could be   by the action of  
${\rm GL}(V)$; we note however that the action of the center of ${\rm GL}(V)$ on the Grassmannians is equivalent to one by $\bbc^*$ acting with fixed weight $-\rho$ on $V^*$ and fixed but different weight $\sigma$ on $\bbc^n$, in the sense that their orbits are the same. On the other hand, they linearize differently; we will see that a quotient by an action of ${\rm SL}(V)\times \bbc^*$ on $(\alpha,\beta)$ is more appropriate. In general, to obtain correspondences between symplectic moduli and holomorphic moduli in other problems, e.g. in moduli of parabolic bundles, one must tune the polarizations on the holomorphic side. It is the case also here; but  here there is a supplementary tuning, in choosing the particular $\bbc^*$ action.

\subsection{Stability condition for points in $P\times Q^{\ell}$}

For the polarization corresponding to $\eta$, we want to examine the 
stability of the element $(\alpha,\beta) = 
(\alpha,(\beta_1,\cdots ,\beta_p)) $, first under the action of
${\rm SL}(V)$. 
We note that $(\alpha, \beta)$ is a (semi)stable point in $P\times Q^{\ell}$ if and only if 
it is a (semi)stable in $P' \times Q^{\ell}$ where 
$$P'\,=\, {\mathbb P}({\rm Hom} \ (\Lambda^n(V), H^0(\det (E)))^*)$$
with respect to the canonical linearization for the line bundle 
$\ca O(1)_{P'}^{\otimes \mu} \boxtimes (\boxtimes_i {\ca O}(1)_{Q}^{\nu})$ 
(see \cite[4.12]{Ma}, \cite[p.84]{HL}). 

We use the Hilbert criterion, as expounded in \cite{MFK}, which involves examining the action of all 
one-parameter subgroups of ${\rm SL}(p)$. This is equivalent to choosing 
a basis $v_i$ of $V^*$, and corresponding weights $a_i$ summing to zero, with $a_1\leq 
a_2\leq\cdots \leq a_n$, and taking the corresponding action. As remarked 
in \cite{HL}, the cone of these weights for the group $ {\rm 
SL}(V)\times \{\bbi\}$ acting on $V$ is generated by the weights 
$$((p'-p),(p'-p),\cdots ,(p'-p), p',\cdots ,p')\, ,$$
where the $(p'-p)$ is repeated $p'$ times and the $p'$ is repeated 
$(p-p')$ times. 

It suffices to consider stability for these generators. One now 
remarks that each of the set of choices (basis, generator of the 
cone of weights) corresponds to the choice of a $p'$-dimensional 
subspace $W$ of $V$ (the first $p'$ vectors) and a complementary 
space $W^\perp$ of it.

We consider the action corresponding to $(W\, , W^\perp)$ on 
$(\alpha\, , \beta)$. Decompose the representations in terms of weight 
spaces: let $x_i$ be a local basis of weight vectors for the action on 
the fibers of $P$, and set $\alpha= \sum_i\alpha_i x_i$; similarly, 
put $\beta_i=\sum_j\beta_{i,j} y_j$, for a basis of weight vectors 
$y_j$ for $Q$. Now define
$$
w_{W,\alpha} \,:=\, -\min_{\alpha_i\neq 0}{\rm weight}(x_i)~\, ~
\text{and}~\, ~ w_{W,\beta_i} \, :=\, -\min_{\beta_{i,j}\neq 0}{\rm 
weight}(y_j)\, .
$$
Setting 
$$w_W \equiv w_{W,\alpha} + \eta \sum_iw_{W,\beta_i}\, ,$$
for semistability (respectively, stability), one wants, as in \cite{HL}, that
$$0\,\leq\, w_W ~\,~\text{(respectively,}~\, 0\,< \, w_W{\rm )}$$
for all $p'$, $W$ and ${W}^\perp$.

\textbf{Notation.} We will use $(\leq)$ to denote $<$ for stability, 
and $\leq$ for semi-stability.

\begin{remark}
We will see that the choice of $W^\perp$ is irrelevant, 
and only $W$ counts; hence the notation $w_W$.
\end{remark}

Let $E_W$ be the subsheaf of $E$ generated by $W$. One has 
\cite[Lemma 1.23]{HL}, for $W$, with its accompanying weights:

\begin{lem}[\cite{HL}]
Let $n'\,=\, {\rm rank}(E_W)$. Then
$$w_{W,\alpha} \,=\,n'(p-p') - (n-n')p'\,=\,pn'-p'n\, .$$
\end{lem}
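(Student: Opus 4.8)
The plan is to read off $w_{W,\alpha}$ from the explicit description of $\alpha$ as the determinant homomorphism $h\colon \Lambda^n(V)\to H^0(X,\det E)$ of \eqref{h}, following \cite[Lemma 1.23]{HL} and the Gieseker construction \cite{Gi}. First I would fix a basis $v_1,\dots,v_p$ of $V$ adapted to the splitting $V=W\oplus W^\perp$, so that $v_1,\dots,v_{p'}$ span $W$ and $v_{p'+1},\dots,v_p$ span $W^\perp$; by hypothesis the chosen one-parameter subgroup assigns weight $a_i=p'-p$ to each $v_i$ with $i\le p'$ and weight $a_i=p'$ to each $v_i$ with $i>p'$. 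Writing $s_i\in H^0(X,E)$ for the section corresponding to $v_i$ under $V\cong H^0(X,E)$, the induced basis of weight vectors of the fibre of $P$ (equivalently of $Q'$) is indexed by the $n$-element subsets $I=\{i_1<\dots<i_n\}\subset\{1,\dots,p\}$; the weight of $x_I$ is $\sum_{i\in I}a_i$ (the $\operatorname{SL}(V)$-action on the Picard factor $H^0(X,\det E)$ being trivial), and the corresponding component $\alpha_I$ is the section $s_{i_1}\wedge\cdots\wedge s_{i_n}\in H^0(X,\det E)$.

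The key observation is the nonvanishing criterion: $\alpha_I\neq 0$ precisely when the sections $s_{i_1},\dots,s_{i_n}$ are linearly independent at the generic point of $X$, that is, when they generate a subsheaf of $E$ of generic rank $n$. Hence
\[
w_{W,\alpha}\;=\;-\min\Bigl\{\textstyle\sum_{i\in I}a_i \;:\; \alpha_I\neq 0\Bigr\},
\]
and since the indices in $W$ carry the smaller weight $p'-p\le p'$, minimizing the total weight amounts to packing $I$ with as many $W$-indices as possible while keeping $\alpha_I\neq 0$.

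The heart of the argument --- and the step to treat with care --- is the count of how many $W$-indices a nonvanishing $\alpha_I$ can contain. On one hand this number is at most $n'=\operatorname{rank}(E_W)$: the sections $s_i$ with $i\le p'$ generate $E_W$, so at the generic point of $X$ they span only an $n'$-dimensional subspace, and any generically independent subfamily of them has at most $n'$ members. On the other hand this bound is attained: one selects $n'$ of the $s_i$, $i\le p'$, that are generically independent, and, because $V$ globally generates $E$ while $W$ generates $E_W$, the image of $W^\perp$ generates $E/E_W$ at the generic point, so these $n'$ sections can be completed by $n-n'$ sections drawn from $W^\perp$ to a generically independent $n$-tuple. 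The minimizing $I$ therefore consists of exactly $n'$ indices of weight $p'-p$ and $n-n'$ indices of weight $p'$, whence
\[
w_{W,\alpha}\;=\;-\bigl[n'(p'-p)+(n-n')p'\bigr]\;=\;n'(p-p')-(n-n')p'\;=\;pn'-p'n,
\]
as claimed. The only subtlety, which I would make explicit rather than gloss over, is that both halves of this extremal count rely on the global generation of $E$ by $V$ and on $E_W$ being defined as the subsheaf generated by $W$.
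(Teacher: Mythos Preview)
Your argument is correct and is exactly the computation underlying \cite[Lemma~1.23]{HL}; the paper itself gives no independent proof but simply cites that lemma, so there is nothing to compare beyond noting that you have unpacked the cited result faithfully. The one point worth tightening is the sign/convention check: make explicit that the ${\rm SL}(V)$-action on the fibre ${\rm Hom}(\Lambda^n V, H^0(\det E))$ assigns weight $\sum_{i\in I}a_i$ to the basis vector $x_I$ dual to $v_{i_1}\wedge\cdots\wedge v_{i_n}$ under the convention in use, since this is what makes ``minimize the weight'' mean ``maximize the number of $W$-indices'' rather than the reverse.
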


Given $\beta_i$, let $g_i\, \in\, {\rm Gr}_n(E_{p_i}\oplus 
\bbc^n)$ be the $n$-dimensional subspace that it defines. Let 
$$\Pi\,:\, 
E_{p_i}\oplus \bbc^n\,\longrightarrow\, E_{p_i}$$ 
be the natural projection. 
Let $E_{W,p_i}$ be the image of $E_W$ in the fiber of $E$ at $p_i$. We define 
$$m'_i \,=\, \dim(E_{W,p_i}), \quad s'_i \,=\, \dim (E_{W,p_i}\cap g_i), 
\quad t'_i \,= \,\dim (E_{W,p_i}/ (E_{W,p_i}\cap \Pi( g_i)))\, ,$$
$$
{\rm and}~\,~ r'_i \,=\, m'_i-s'_i-t'_i \,=\, \dim ( 
(E_{W,p_i}\cap \Pi(g_i))/(E_{W,p_i}\cap g_i))\, .$$
Note that if $E_W$ is a sub-bundle at $p_i$, then $m'_i=n'$; if the 
plane $g_i$ at $p_i$ is the graph of a framing, then $s'_i=t'_i = 
0$. Also, $s'_i\leq s_i, t'_i\leq t_i$, and $s'_i+t'_i\leq m'_i$. All of these quantities, when unprimed, refer to the case $W=V^*$.

The element $\beta_i$ can be written as 
$$\beta_i = b_1\wedge b_2\wedge\cdots\wedge b_{t_i}\wedge 
(e_{t_i+1}+b_{t_i+1})\wedge\cdots\wedge(e_n+b_n)\label{g-equation}\, .$$
Here $b_1,\cdots, b_{t_i}$ are independent elements of $V^*$, and
$e_{t_i+1},\cdots , 
e_{n}$ are independent elements of $(\bbc^n)^*$.
Now choose a basis $\{v_j\}$ of $V^*$ for which the first $p'$ vectors 
form a basis of $W$, and write the components of the $b_k$ as a matrix $b_{k,j}$.
Then row reduce (taking combinations of the $b_k$) and put the elements $b_1,\cdots , b_{t_i}$ in reduced 
echelon 
form with respect to this basis, permuting if necessary; let 
$c_1<\cdots <c_{t_i}$ be the indices for which $b_1,\cdots 
,b_{t_i}$ have nonzero coordinates in 
the basis for the first time. 

Use the $b_k$, $k= 1,\cdots ,t_i$, in their reduced echelon form to 
normalize 
the $c_j$-th entries, $j=1,\cdots ,t_i$ of $b_{t_i+1},\cdots , b_n$ to zero, 
and 
then 
put the $b_{t_i+1},\cdots ,b_n$ also into reduced echelon form, with 
$c_{t_i+1}, 
\cdots ,c_n$ the indices for which $b_{t_i+1},\cdots , b_n$ have a 
nonzero entry for the first time. 

\begin{lem} 
The following two hold:
\begin{align} t'_i & = \dim (E_{W,p_i}/ E_{W,p_i}\cap \Pi( g_i)) = {\rm \ number \ of }\ c_j\leq p'{\rm\ with}\ j\leq t_i \\
r'_i &= \dim (E_{W,p_i}\cap \Pi( g_i)/ E_{W,p_i}\cap g_i) = {\rm \ 
number \ of }\ c_j\leq p' {\rm\ with}\ j> t_i \, .\end{align} 
\end{lem}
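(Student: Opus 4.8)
The plan is to translate both asserted equalities into the single language of \emph{restriction to $W$}, and then match them by two dimension counts. Write $V=H^0(X,E)$ and let $\mathrm{ev}_i\colon V\to E_{p_i}$ be evaluation at $p_i$; since $E$ is globally generated $\mathrm{ev}_i$ is onto, so $E_{W,p_i}=\mathrm{ev}_i(W)$ and the dual $\mathrm{ev}_i^{\ast}\colon E_{p_i}^{\ast}\hookrightarrow V^{\ast}$ is injective. The encoding described before the lemma realizes $\beta_i$ concretely as $\beta_i=(\mathrm{ev}_i^{\ast}\oplus\mathrm{id})(g_i^{\perp})$, where $g_i^{\perp}\subset E_{p_i}^{\ast}\oplus(\mathbb{C}^n)^{\ast}$ is the annihilator of $g_i$. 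Taking the adapted basis of the statement, so that the first $p'$ basis vectors span $W$ and the coordinates of each $b_k\in V^{\ast}$ are taken in the dual basis, the restriction $b_k|_W$ is recorded by the first $p'$ coordinates. I would first record the elementary \emph{echelon principle}: for covectors put in echelon form, a row restricts to zero on $W$ exactly when its pivot exceeds $p'$, while rows with pivot $\le p'$ have leading entries in distinct columns $\le p'$ and hence restrict to independent covectors; thus for any such list the number of pivots $c_j\le p'$ equals $\dim\big(\mathrm{span}(\text{rows})|_W\big)$. This reduces both assertions to computing two restriction ranks.

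For the first formula I would identify the span invariantly. From the displayed expression for $\beta_i$ the pure $V^{\ast}$-part is $\mathrm{span}(b_1,\dots,b_{t_i})=\beta_i\cap V^{\ast}$, and the concrete description gives $\beta_i\cap V^{\ast}=\mathrm{ev}_i^{\ast}(\Pi(g_i)^{\perp})$, where $\Pi(g_i)^{\perp}\subset E_{p_i}^{\ast}$ is the annihilator of $\Pi(g_i)$. Restricting to $W$ amounts to applying $(\mathrm{ev}_i^{W})^{\ast}\colon E_{p_i}^{\ast}\to W^{\ast}$, whose kernel is the annihilator of $E_{W,p_i}$, so by the echelon principle applied to the first group,
\[
\#\{c_j\le p'\colon j\le t_i\}\;=\;\dim\big(\mathrm{ev}_i^{\ast}(\Pi(g_i)^{\perp})\big|_W\big)\;=\;t_i-\big(n-\dim(\Pi(g_i)+E_{W,p_i})\big).
\]
Since $\dim\Pi(g_i)=n-t_i$, this equals $\dim(E_{W,p_i}+\Pi(g_i))-\dim\Pi(g_i)=t'_i$, which is the first identity.

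For the second formula I would avoid analyzing the normalized rows directly and instead compute the \emph{total} restriction rank. The full list $b_1,\dots,b_n$ spans $\mathrm{proj}_{V^{\ast}}(\beta_i)=\mathrm{ev}_i^{\ast}((g_i\cap E_{p_i})^{\perp})$ (by projection–annihilator duality, using $\dim(g_i\cap E_{p_i})=s_i$), and after the normalization step the nonzero combined rows have distinct pivots $c_1,\dots,c_n$, each the first nonzero entry of its row. Hence the total number of pivots $\le p'$ is $\dim\big(\mathrm{ev}_i^{\ast}((g_i\cap E_{p_i})^{\perp})\big|_W\big)$, which the same restriction computation evaluates to $\dim\big((g_i\cap E_{p_i})+E_{W,p_i}\big)-s_i=m'_i-\dim(E_{W,p_i}\cap g_i)=m'_i-s'_i$. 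Because the pivots split without overlap into the two groups, subtracting the first-group count gives
\[
\#\{c_j\le p'\colon j>t_i\}\;=\;(m'_i-s'_i)-t'_i\;=\;r'_i,
\]
by the defining relation $r'_i=m'_i-s'_i-t'_i$; this is the second identity.

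The main obstacle is bookkeeping rather than ideas: one must keep the $V$/$V^{\ast}$ dualities straight (the $b_k$ live on the $V^{\ast}$ side while $W$ and $E_W$ live on the $V$ side), verify the projection–annihilator dualities such as $\mathrm{proj}_{A^{\ast}}(U^{\perp})=(U\cap A)^{\perp}$ that are used to identify both spans, and handle gracefully the fact that up to $s_i$ of the normalized rows $b_{t_i+1},\dots,b_n$ may vanish. Such vanishing rows contribute no pivot and zero restriction, so they drop out of both sides and leave the counts unaffected. Once the echelon principle and the two restriction-rank computations are in place, both equalities follow at once.
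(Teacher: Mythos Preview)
Your proof is correct and follows essentially the same route as the paper's. The paper's argument is a two-line sketch: it notes that $a\in\Pi(g_i)\iff b_j(a)=0$ for $j\le t_i$ and $a\in g_i\cap E_{p_i}\iff b_j(a)=0$ for all $j$, which immediately gives the first count and the total count $m'_i-s'_i$ of pivots $\le p'$; subtracting yields the second formula. Your ``echelon principle'' together with the two annihilator identifications $\mathrm{span}(b_1,\dots,b_{t_i})=\mathrm{ev}_i^\ast(\Pi(g_i)^\perp)$ and $\mathrm{span}(b_1,\dots,b_n)=\mathrm{ev}_i^\ast((g_i\cap E_{p_i})^\perp)$ is exactly the dual formulation of those same two characterizations, and your subtraction step matches the paper's. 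The only substantive difference is presentational: you work invariantly with annihilators and restriction maps, while the paper works directly with the linear equations; you are also more explicit about the possibility that some of the $b_k$ with $k>t_i$ vanish when $s_i>0$, a point the paper glosses over.
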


\begin{proof}
The proof is fairly straightforward; one has
 $a\in \Pi(g_i) \iff b_j(a) = 0, j=1,\cdots ,t_i$, from which the first 
result follows. Also, $a \in g_i\cap E_{p_i} \iff b_j(a) = 0, j=1,\cdots ,n$, and 
so for all $i$,
$$m'_i-s'_i\,= \,\dim (E_{W,p_i}/ E_{W,p_i}\cap g_i) \,=\, {\rm \ 
number \ of }\ 
c_j\leq p'\, ,$$ from which the second result follows.
\end{proof} 

We note that ${\rm SL}(p)$ acts trivially on the $e_j$, and with weight 
$p'-p$ on $v_j, j=1,\cdots ,p'$, and with weight $p'$ on the rest. One 
now has the following result for (minus) the minimum weight:

\begin{lem} We have
$$w_{W,\beta_i} \,=\, t'_i(p-p') - (t_i-t'_i)p' + r'_i(p-p')\, 
=\, pt'_i-p't_i +r'_i(p-p')\, .$$
\end{lem}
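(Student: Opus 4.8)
The plan is to compute $w_{W,\beta_i}=-\min_{\beta_{i,j}\neq 0}\mathrm{weight}(y_j)$ directly from the Pl\"ucker expansion of $\beta_i$. I would fix the weight eigenbasis of $\Lambda^n(V^*\oplus\bbc^n)$ consisting of wedges $y_J=w_{j_1}\wedge\cdots\wedge w_{j_n}$ of $n$ distinct vectors drawn from $\{v_1,\dots,v_p\}\cup\{e_1,\dots,e_n\}$, whose weight is the sum of the factor weights, where $\mathrm{weight}(v_j)=p'-p$ for $j\le p'$, $\mathrm{weight}(v_j)=p'$ for $j>p'$, and $\mathrm{weight}(e_k)=0$. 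Thus $w_{W,\beta_i}$ is minus the smallest weight among the $y_J$ occurring with nonzero coefficient, i.e. (as $\beta_i$ is decomposable) minus the weight of a minimum-weight basis of the coordinate matroid of the $n$-dimensional span $U$ of the factors $b_1,\dots,b_{t_i},e_{t_i+1}+b_{t_i+1},\dots,e_n+b_n$, since $\beta_{i,J}\neq 0$ exactly when the columns $J$ form a basis of the row space.

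For the lower bound I would work with the fully reduced factors (first group in reduced echelon with pivots $c_1,\dots,c_{t_i}$, second group reduced among themselves with pivots $c_{t_i+1},\dots,c_n$). Any nonzero $y_J$ arises by selecting one basis vector from the support of each factor, the selections being distinct, so $\mathrm{weight}(y_J)$ is at least the sum over factors of the minimum support-weight. A factor $b_k$ with $k\le t_i$ lies in $V^*$ and its least support-weight is $p'-p$ precisely when $c_k\le p'$ and is $p'$ otherwise; a factor $e_k+b_k$ with $k>t_i$ contains $e_k$ of weight $0$, so its least support-weight is $p'-p$ when $c_k\le p'$ and is $0$ otherwise. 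Crucially, the reductions among second-group factors only recombine the weight-zero $e$-parts, so they leave these per-factor minima unchanged. Summing and invoking the previous lemma's counts ($t'_i$ of the $c_k$ with $k\le t_i$, and $r'_i$ of the $c_k$ with $k>t_i$, are $\le p'$) gives $\mathrm{weight}(y_J)\ge (t'_i+r'_i)(p'-p)+(t_i-t'_i)p'$.

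For the matching upper bound I would exhibit one surviving monomial. Keeping the second-group factors as $e_k+b'_k$ with $b'_k$ cleared at the columns $c_1,\dots,c_{t_i}$ (so each carries a single $e_k$), take $J$ to consist of the pivots $c_1,\dots,c_{t_i}$, a set $D$ of $r'_i$ cheap columns ($\le p'$) inside the second group, and $n-t_i-r'_i$ of the $e_k$. Ordering rows and columns by group, the defining $n\times n$ minor is block upper-triangular: the first-group block is $I_{t_i}$, the lower-left block vanishes since the $b'_k$ were cleared at $c_1,\dots,c_{t_i}$ and the $e$'s have no $V^*$-component, and after the single-$e$ columns pick out their rows the residual block is an $r'_i\times r'_i$ minor of the cheap columns of the second group. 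Because those cheap columns have rank exactly $r'_i$ (the number of second-group pivots $\le p'$), one can choose $D$ and the complementary rows so that this minor is nonzero, whence the full minor is $\pm1$; its weight is $(t'_i+r'_i)(p'-p)+(t_i-t'_i)p'$, matching the lower bound. Therefore $w_{W,\beta_i}=-[(t'_i+r'_i)(p'-p)+(t_i-t'_i)p']=t'_i(p-p')-(t_i-t'_i)p'+r'_i(p-p')$, and expanding $t'_i(p-p')-(t_i-t'_i)p'=pt'_i-p't_i$ gives the stated second form $pt'_i-p't_i+r'_i(p-p')$.

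The main obstacle is the bookkeeping of nonvanishing coefficients across the echelon reduction: one must use the reduced factors to make the per-factor minima tight for the lower bound, yet retain single $e_k$'s to keep the achieving minor transparently block-triangular for the upper bound, and verify these are two descriptions of the same decomposable $\beta_i$. A cleaner but more abstract route, which avoids exhibiting a monomial, is to run the greedy algorithm for a minimum-weight matroid basis, ordering the ground set as cheap $v$'s, then $e$'s, then expensive $v$'s; the minimum weight is then $\rho_1(p'-p)+(n-\rho_2)p'$ with $\rho_1=\#\{c_j\le p'\}=t'_i+r'_i$ the rank of the cheap columns and $\rho_2=n-(t_i-t'_i)$, the codimension $t_i-t'_i$ arising from the high-pivot first-group vectors that lie in the expensive-$v$ span.
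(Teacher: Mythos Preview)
Your argument is correct and follows the paper's intended route: the paper sets up the echelon form precisely so that the minimum-weight Pl\"ucker monomial can be read off from the pivot positions, and then states the lemma without further proof; you have simply filled in the details of that computation, supplying both the lower bound (via per-factor support minima in the reduced form) and the matching upper bound (via an explicit nonvanishing minor). The matroid-greedy reformulation you offer at the end is a pleasant repackaging of the same idea but not a genuinely different method.
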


Putting the results for $\alpha, \beta$ together,
we have:

\begin{prop}
The pair $(\alpha,\beta)$ is ${\rm SL}(V)$-semistable (respectively, 
stable) for the parameter $\eta$ if and only if for all planes $W$,
\begin{equation}
0\quad (\leq)\quad w_W \,=\, 
pn'-p'n +\eta \sum_i[pt'_i-p't_i + r'_i(p-p')]\, .
\label{W-inequality}\end{equation}
\end{prop}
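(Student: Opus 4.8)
The plan is to deduce the Proposition from the Hilbert--Mumford numerical criterion, feeding in the two weight formulas established in the lemmas above.

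First I would invoke the numerical criterion of \cite{MFK}: $(\alpha,\beta)$ is ${\rm SL}(V)$-semistable (respectively, stable) for the chosen linearization precisely when the Mumford numerical function is non-negative (respectively, strictly positive) on every non-trivial one-parameter subgroup. After diagonalizing a one-parameter subgroup in a basis of $V$, with weights normalized to $a_1\le\cdots\le a_p$ and $\sum_j a_j=0$ (one may first pass to the equivalent problem on $P\times Q'^\ell$ recorded above, \cite{Ma,HL}), the numerical function of each factor is minus the smallest weight occurring among the nonzero components of the weight decompositions $\alpha=\sum_i\alpha_i x_i$, respectively $\beta_i=\sum_j\beta_{i,j}y_j$; these are precisely $w_{W,\alpha}$ and $w_{W,\beta_i}$. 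For the product polarization $\ca O(1)_P^{\otimes\mu}\boxtimes\bigl(\boxtimes_i\ca O(1)_Q^{\otimes\nu}\bigr)$ the numerical function is additive over the factors, hence equals $\mu\,w_{W,\alpha}+\nu\sum_i w_{W,\beta_i}$; dividing by $\mu$ and writing $\eta=\nu/\mu$ turns the sign condition into $0\ (\le)\ w_W$, where $w_W=w_{W,\alpha}+\eta\sum_i w_{W,\beta_i}$.

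The step carrying the real content, which I expect to be the main obstacle, is the reduction from arbitrary one-parameter subgroups to the distinguished generators; here I would follow \cite{HL}. The key input is the indecomposability of $\alpha$ and of each $\beta_i$ noted earlier (that both lie in Grassmannians under their Pl\"ucker maps): for such points the numerical function attached to a weighted filtration of $V$ can be written as a non-negative combination, over the jumps of the filtration, of the values attached to the individual subspaces occurring in it. Consequently the condition over the whole cone $\{a_1\le\cdots\le a_p,\ \sum_j a_j=0\}$, whose extremal rays are the vectors $((p'-p),\dots,(p'-p),p',\dots,p')$, is equivalent to the condition at these generators; each generator is the weighted filtration of a single $p'$-dimensional subspace $W\subset V$, with $W^\perp$ playing no role, as in the Remark. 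Thus ${\rm SL}(V)$-(semi)stability is equivalent to $0\ (\le)\ w_W$ for every subspace $W$.

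Finally I would substitute the weights already computed. The lemma quoted from \cite{HL} gives $w_{W,\alpha}=pn'-p'n$ with $n'={\rm rank}(E_W)$, and the preceding lemma gives $w_{W,\beta_i}=pt'_i-p't_i+r'_i(p-p')$. Inserting these into $w_W=w_{W,\alpha}+\eta\sum_i w_{W,\beta_i}$ yields
$$w_W=pn'-p'n+\eta\sum_i\bigl[pt'_i-p't_i+r'_i(p-p')\bigr],$$
and the equivalence of $0\ (\le)\ w_W$ for all $W$ with ${\rm SL}(V)$-(semi)stability is exactly the assertion of the Proposition.
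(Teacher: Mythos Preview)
Your proposal is correct and follows essentially the same route as the paper. The paper does not give a separate proof of this proposition: it is presented as the immediate combination of the preceding discussion (Hilbert--Mumford, reduction to the extremal generators of the weight cone following \cite{HL}, and the definition $w_W=w_{W,\alpha}+\eta\sum_i w_{W,\beta_i}$) with the two lemmas computing $w_{W,\alpha}=pn'-p'n$ and $w_{W,\beta_i}=pt'_i-p't_i+r'_i(p-p')$; you have simply written this out in full.
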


\begin{lem}\label{compare}
Let $W$ and $W_1$ be two subspaces of $V$ such that each of $W, W_1$ generates the same subsheaf of $E$ and $W_1 \supset W$. Then 
$w_{W_1} \le w_W$ and if $W_1 \supsetneq W$ then $w_{W_1} < w_W$. 
\end{lem}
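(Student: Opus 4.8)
The plan is to read off everything from the explicit weight formula
$$w_W \,=\, pn' - p'n + \eta\sum_i\bigl[pt'_i - p't_i + r'_i(p-p')\bigr]$$
established in \eqref{W-inequality}, and to exploit the fact that this formula depends on the subspace $W$ in only two ways: through its dimension $p' = \dim W$, and through the subsheaf $E_W$ that it generates. Indeed, $n' = \mathrm{rank}(E_W)$, while the integers $s'_i, t'_i, r'_i$ are all defined purely in terms of the image $E_{W,p_i}$ of $E_W$ in the fiber of $E$ at $p_i$, together with the fixed data $g_i$ and $\Pi(g_i)$; the unprimed $t_i$ and the integers $p,n$ are fixed once and for all. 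By hypothesis $W$ and $W_1$ generate the same subsheaf, so $E_{W_1}=E_W$, whence $E_{W_1,p_i}=E_{W,p_i}$ for every $i$, and therefore $n'$, $t'_i$ and $r'_i$ take identical values for the two subspaces.

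Granting this, I would set $p'=\dim W$ and $p'_1=\dim W_1$, noting that $W\subseteq W_1$ forces $p'_1\ge p'$, with equality exactly when $W_1=W$. Substituting both dimensions into the displayed formula and subtracting, every term carrying sheaf-data cancels (because those quantities are common to $W$ and $W_1$), leaving the one-line identity
$$w_{W_1} - w_W \,=\, -(p'_1 - p')\Bigl[n + \eta\sum_i(t_i + r'_i)\Bigr].$$
The bracketed factor is strictly positive: $n\ge 1$, $\eta>0$, and $t_i,r'_i\ge 0$ for all $i$ (indeed the factor is at least $n$). Hence the sign of $w_{W_1}-w_W$ is controlled entirely by $-(p'_1-p')\le 0$, which yields $w_{W_1}\le w_W$, with strict inequality precisely when $p'_1>p'$, that is, when $W_1\supsetneq W$. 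This proves both assertions of the lemma.

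There is no real obstacle here; the statement is a monotonicity property already encoded in the weight formula. The only point deserving a moment's care — and the step I would flag as the crux — is the verification that enlarging $W$ \emph{without changing the generated subsheaf} leaves $n'$ and all the primed fiber invariants $t'_i,r'_i$ untouched, so that the entire dependence of $w_W$ on the chosen subspace is funnelled through $p'$. This is immediate once one traces the definitions of $m'_i,s'_i,t'_i,r'_i$ back to $E_{W,p_i}$, after which the result reduces to the displayed cancellation and an elementary sign count.
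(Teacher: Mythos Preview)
Your proof is correct and is essentially the same as the paper's: both compute directly from \eqref{W-inequality} that $w_W - w_{W_1} = (p'_1 - p')\bigl[n + \eta\sum_i(t_i + r'_i)\bigr]\ge 0$, using that the sheaf-dependent quantities $n', t'_i, r'_i$ coincide for $W$ and $W_1$. You spell out more carefully why those invariants agree, but the argument is identical.
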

\begin{proof}
Let $p'=$ dim $W$, $p'_1= $ dim $W_1$. We have 
$$w_W - w_{W_1} = (p'_1 -p')(n + \eta \sum_i (t_i+r'_i)) \ge 0\, ,$$
with equality if and only if $p'= p'_1$.
\end{proof} 

We now choose our stability parameter, with $k$ as in \ref{defk}: 
\begin{equation}\label{sparam}
\eta \,=\, \frac{1}{k-g+\frac{1}{2}}.
\end{equation}

Set 
$$\delta\,=\, {\rm degree}(E).$$
 For any $W \subset V$, 
let 
$$\delta' \,=\, {\rm degree}(E_W),$$ where $E_W$ is the subsheaf of 
$E$ generated by $W$. 

Suppose that $H^1(X,E_W)\,=\,0, h^0(X, E_W)=p'$. 
Recalling \ref{defp}, one then has 
$$p\,= \,\delta +(1-g)n, \quad p' \,=\, \delta' + (1-g)n'\, .$$
Substituting into $w_W$, and dividing by $nn'$, one has the semistability condition for the action of $ {\rm SL}(V)$:
\begin{align}\label{sl-stab}0\quad (\leq)\quad \frac{\delta}{n}\Bigl(1+\eta\sum_i 
(\frac{r'_i +t'_i}{n'})\Bigr)& - \frac{\delta'}{n'}\Bigl(1+\eta \sum_i( \frac{r'_i+t_i}{n})\Bigr)\\ &+ 
\eta(1-g)\sum_i \Bigl[(\frac{r'_i +t'_i}{n'}) - ( 
\frac{r'_i+t_i}{n})\Bigr]\, .\nonumber\end{align}

As noted above, this vector bundle $E$ was twisted up from an original 
bundle $E_0$; one has 
$$ \delta \,=\, \delta_0 + kn,\,~ p \,=\, \delta_0+ (k-g+1) n$$
for some $k$. Likewise the subsheaf $E_{W}$ arises from a $E_{W,0}$ and 
$$ \delta' \,= \,\delta'_0 + kn',\,~ p' = \delta_0'+ (k-g+1) n'\, ,$$
where $\delta'_0$ is the degree of $E_{W,0}$.

Substituting into our expression \eqref{sl-stab}, we 
find
\begin{align}
0\quad (\leq) \quad w_{W,k} \quad = & \frac{\delta_0}{n} - \frac{\delta'_0}{n'} 
+ \sum_i \Bigl[(\frac{r'_i +t'_i}{n'}) - ( 
\frac{r'_i+t_i}{n})\Bigr]\\
+\, &\frac{1}{k}\sum_i\Bigl[\frac{\delta_0}{n}(\frac{r'_i +t'_i}{n'}) 
- \frac{\delta'_0}{n'}( \frac{r'_i+t_i}{n})
+(1-g)[(\frac{r'_i +t'_i}{n'}) - ( \frac{r'_i+t_i}{n})]
\Bigr]\nonumber\\ +\, &\frac{1}{k}[(\frac{1}{2}-g)(\frac{\delta_0}{n} - \frac{\delta'_0}{n'} )].
\nonumber
\end{align}
Let us refer to this condition as the $k$-(semi-)stability condition for 
the ${\rm SL}(V)$-action. Taking a limit, we have the 
$\infty$-(semi-)stability condition:
\begin{align}0\quad (\leq)\quad w_{W,\infty} \quad = & \frac{\delta_0}{n} - 
\frac{\delta'_0}{n'} + \sum_i \Bigl[(\frac{r'_i +t'_i}{n'}) - ( 
\frac{r'_i+t_i}{n})\Bigr]\, . \end{align}

\begin{prop}\label{h0h10}
Let
 $$w_{W,A}=\, \Bigl[\frac{\delta_0}{n} - \frac{\delta'_0}{n'} \Bigr]\Bigl[- \frac{\delta_0}{n}-\frac{1}{2}
 + \sum_i( \frac{r'_i+t_i}{n})\Bigr]\, .$$. \\
Then for 
$k> C$, where $C= C(n,l,g)$ is a constant, the following statements 
hold:
\begin{enumerate}
\item $w_{W,k}\ge 0$ if and only if $w_{W,\infty}\ge 0$ and in 
case $w_{W,\infty}=0$ one has 
$w_{W,A} \ge 0\, .$
\item $w_{W,k}>0$ if and only if $w_{W,\infty}\ge 0$ and in case 
$w_{W,\infty}=0$ one has 
$w_{W,A} > 0\, .$.
\item $w_{W,k}=0$ if and only if $w_{W,\infty}=0$ and then 
$w_{W,A}=0$.
\end{enumerate}
\end{prop}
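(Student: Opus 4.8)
The plan is to regard $w_{W,k}$ as the value $w_{W,\infty}$ plus a $1/k$-correction, and to show that for $k$ large the sign of $w_{W,k}$ is governed by $w_{W,\infty}$ whenever the latter is nonzero, and by $w_{W,A}$ exactly when $w_{W,\infty}=0$. Abbreviate $P=\sum_i\frac{r'_i+t'_i}{n'}$ and $S=\sum_i\frac{r'_i+t_i}{n}$, so that $w_{W,\infty}=\frac{\delta_0}{n}-\frac{\delta'_0}{n'}+P-S$ and, by inspection of the formula for $w_{W,k}$,
$$w_{W,k}=w_{W,\infty}+\tfrac1k D_W,\qquad D_W=\tfrac{\delta_0}{n}P-\tfrac{\delta'_0}{n'}S+(1-g)(P-S)+\bigl(\tfrac12-g\bigr)\bigl(\tfrac{\delta_0}{n}-\tfrac{\delta'_0}{n'}\bigr).$$
Two facts about $D_W$ drive the proof: a decomposition showing that the only a priori unbounded ingredient, namely $\delta'_0/n'$, enters $D_W$ only through $w_{W,\infty}$; and an exact evaluation of $D_W$ on the locus $w_{W,\infty}=0$.

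First I would establish the decomposition $D_W=\lambda_W\,w_{W,\infty}+\rho_W$ with $\lambda_W=S+\tfrac12-g$. Collecting the $\delta'_0/n'$-terms gives $D_W=-\frac{\delta'_0}{n'}\bigl(S+\tfrac12-g\bigr)+B_W$, where $B_W=\frac{\delta_0}{n}P+(1-g)(P-S)+(\tfrac12-g)\frac{\delta_0}{n}$ no longer involves $\delta'_0$; substituting $\frac{\delta'_0}{n'}=\frac{\delta_0}{n}+P-S-w_{W,\infty}$ then yields $\rho_W=B_W-(\frac{\delta_0}{n}+P-S)(S+\tfrac12-g)$. The point is that $\lambda_W$ and $\rho_W$ depend only on $\delta_0,g,\ell$ and on the integers $r'_i,t'_i,t_i$, each confined to $[0,n]$; hence $|\lambda_W|$ and $|\rho_W|$ are bounded by a constant depending only on $n,\ell,g$ (and the fixed $\delta_0$), uniformly in the subspace $W$. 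Second, I would verify the identity $D_W=w_{W,A}$ when $w_{W,\infty}=0$: there one has $\frac{\delta_0}{n}-\frac{\delta'_0}{n'}=S-P$, and substituting this relation into $D_W$ makes the genus-dependent terms cancel and factors the result as $(S-P)\bigl(-\frac{\delta_0}{n}+S-\tfrac12\bigr)$, which is precisely $w_{W,A}$. Equivalently, since $D_W=\rho_W$ on this locus, this is the statement $\rho_W=w_{W,A}$ there.

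Finally I would combine these with a discreteness estimate. Because $\frac{\delta_0}{n},\frac{\delta'_0}{n'},P,S$ all have denominators dividing $nn'$, the number $w_{W,\infty}$ is an integer multiple of $\frac1{nn'}$, so $|w_{W,\infty}|\ge\frac1{nn'}\ge\frac1{n^2}$ whenever it is nonzero. Choosing $C=C(n,\ell,g)$ so that $k>C$ forces $\frac{|\lambda_W|}{k}<\tfrac12$ and $\frac{|\rho_W|}{k}<\frac1{2n^2}$ uniformly in $W$, one gets, when $w_{W,\infty}\neq0$,
$$w_{W,k}=w_{W,\infty}\Bigl(1+\tfrac{\lambda_W}{k}\Bigr)+\tfrac{\rho_W}{k},$$
where $1+\lambda_W/k>\tfrac12>0$ and $|w_{W,\infty}(1+\lambda_W/k)|\ge\frac1{2n^2}>|\rho_W/k|$, so $w_{W,k}$ has the same (nonzero) sign as $w_{W,\infty}$; and when $w_{W,\infty}=0$, $w_{W,k}=\frac1k D_W=\frac1k w_{W,A}$ has the same sign as $w_{W,A}$. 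This trichotomy yields statements (1)--(3) at once. The main obstacle is the second paragraph: isolating the exact $\delta'_0$-dependence so that the perturbation splits as $\lambda_W w_{W,\infty}+\rho_W$ with uniformly bounded coefficients — this is what lets a single constant $C$ serve for all subsheaves, bypassing any separate boundedness argument for the family of $E_W$ — together with the delicate cancellation producing $w_{W,A}$ on $w_{W,\infty}=0$.
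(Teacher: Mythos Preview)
Your proof is correct and follows essentially the same approach as the paper: both exploit the discreteness $|w_{W,\infty}|\ge 1/(nn')$ when nonzero, control the $1/k$-correction by absorbing its $\delta'_0/n'$-dependence into $w_{W,\infty}$, and verify by direct substitution that on the locus $w_{W,\infty}=0$ the correction equals $w_{W,A}$. Your explicit decomposition $D_W=\lambda_W\,w_{W,\infty}+\rho_W$ with uniformly bounded $\lambda_W,\rho_W$ is a cleaner packaging of what the paper does by substituting the bound $-\delta'_0/n'\ge -\delta_0/n - B + 1/(nn')$ into $w_{W,k}$ in each case separately, but the underlying idea is the same.
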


\begin{proof}
 Set $B= \sum_i \Bigl[(\frac{r'_i +t'_i}{n'}) - (\frac{r'_i+t_i}{n})\Bigr].$

We shall first show that for $k > C_1$, the following holds: 
 $$(a) \ ~ \ {\rm if} \ w_{W,\infty}>0\, , \ {\rm then} \ w_{W,k}>0\, .$$
Substituting for the expression for $B$, the condition $W_{W,\infty} 
\,>\, 0$ takes the form $$\delta_0 n' -\delta'_0 n + n n' B>0\, .$$
Hence it implies that $\delta_0 n' -\delta'_0 n + n n' B\ge 1$ so that 
$- \frac{\delta'_0}{n'} \ge - \frac{\delta_0}{n}- B + \frac{1}{nn'}$. 
Substituting this in the expression for $w_{W,k}$ one gets 
$w_{W,k}\ge \frac{1}{nn'} -\frac{C_1}{knn'},$
where $C_1$ is a constant (i.e., dependent only on $n,\ell,g$). The 
claim (a) follows for $C=C_1$. 

Note that the statement 
$$(b') \ ~ \ {\rm if} \ w_{W,\infty}<0, \ {\rm then} \ w_{W,k}<0$$
is equivalent to the statement
$$(b) \ ~ \ {\rm if}\ w_{W,k}\ge 0, \ {\rm then} \ w_{W,\infty}\ge 0\, ;$$
 we prove $(b')$. 
The condition $w_{W,\infty} <0$ implies that $\delta_0 n' -\delta'_0 n + n n' B\le -1$. 
Hence $- \frac{\delta'_0}{n'} \ge - \frac{\delta_0}{n}- B - \frac{1}{nn'}$. 
Then $w_{W,k}\le \frac{-1}{nn'} +\frac{C_2}{knn'},$ 
where $C_2$ is a constant. Thus for $k > C_2$, we have $w_{W,k}<0$. 

Take $C$ to be the maximum of $C_1$ and $C_2$.

Suppose that $w_{W,k} \geq 0$, then by (b), 
$w_{W,\infty}\ge 0.$ Suppose that $w_{W,\infty}\,=\, 0$. 
Replacing $\frac{\delta'_0}{n'}$ by its value from the equality 
$w_{W,\infty}\,=\, 0$, the equation $w_{W,k}\ge 0$ becomes
$$0\,\leq\, \Bigl[\sum_i[(\frac{r'_i +t'_i}{n'}) - (
\frac{r'_i+t_i}{n}) ]\Bigr]\Bigl[ \frac{\delta_0}{n} + \frac{1}{2}
- \sum_i( \frac{r'_i+t_i}{n})\Bigr]\, ;$$
 equivalently, $w_{W,A}\ge 0$. 

Conversely, let $w_{W,\infty}\ge 0$ and in case of equality, $w_{W,A} 
(\ge) 0$. If $w_{W,\infty}> 0$, by (a) $w_{W,k}>0$. If $w_{W,\infty}= 
0$, then as seen above, 
$w_{W,k}\,\ge\, 0$ (respectively, $w_{W,k}\,>\, 0$) is equivalent to 
$w_{W,A}\,\ge\, 0$ (respectively, $w_{W,A}\,>\, 0$). 
This proves (1) and (2).

For (3), note that $(a)$ is equivalent to \\
$$(a') \ \ {\rm if} \ w_{W,k}\leq 0\, , \ {\rm then} \ w_{W,\infty}\leq 0\, .$$
It follows from $(a')$ and $(b)$ that for $k \ge C$, if $w_{W,k}=0$, 
then $w_{W,\infty}=0$ and then $w_{W,A}=0$. The converse can be proved 
as in (1) and (2).
\end{proof}

\begin{cor}\label{h10}
Let $W \subset V$ be such that $H^1(X, E_W)=0$. If $w_{H^0(X,E_W),k}\ge 
0$, then $W$ satisfies $w_{W}\ge 0$.
\end{cor}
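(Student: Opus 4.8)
The plan is to reduce the statement to Lemma \ref{compare} by comparing the given subspace $W$ with the full space of sections $H^0(X, E_W)$ of the subsheaf $E_W$ it generates. First I would record the tautological inclusion $W \subseteq H^0(X, E_W)$: every section in $W$ has image contained in $E_W$, hence is a section of the subsheaf $E_W$; moreover $W$ and $H^0(X, E_W)$ generate the \emph{same} subsheaf $E_W$, since $W$ generates $E_W$ by definition and $H^0(X, E_W) \supseteq W$ can only generate something between $E_W$ and itself, i.e.\ exactly $E_W$.

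Next I would translate the hypothesis $w_{H^0(X,E_W),k}\ge 0$ into a statement about the genuine ${\rm SL}(V)$-weight. Recall that the formula defining $w_{\cdot,k}$ was obtained from $w_W/(nn')$ precisely under the two assumptions $H^1(X, E_W)=0$ and $\dim(\text{subspace}) = h^0(X, E_W)$ (via the Riemann--Roch substitution $p' = \delta' + (1-g)n'$, together with $p = \delta + (1-g)n$, which holds throughout since $\dim H^0(X,E)=p$). For the subspace $H^0(X, E_W)$ both assumptions hold automatically --- the first by hypothesis and the second tautologically --- so $w_{H^0(X,E_W)} = nn'\cdot w_{H^0(X,E_W),k}$. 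Since $nn' > 0$, the hypothesis gives $w_{H^0(X,E_W)} \ge 0$.

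Finally I would invoke Lemma \ref{compare} with the pair $W \subseteq H^0(X, E_W)$, both generating $E_W$. The lemma yields $w_W \ge w_{H^0(X,E_W)} \ge 0$, which is the desired conclusion $w_W \ge 0$.

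The one point that requires care --- and the only genuine obstacle --- is keeping straight the distinction between the honest weight $w_W$, which depends on $\dim W = p'$, and the $k$-weight formula $w_{\cdot,k}$, in which $p'$ has been eliminated through Riemann--Roch and which therefore coincides with $w_W/(nn')$ \emph{only} when $\dim W = h^0(X, E_W)$. The hypothesis is stated for $H^0(X, E_W)$, which is exactly the subspace where the two quantities agree, and Lemma \ref{compare} supplies the inequality $w_W \ge w_{H^0(X, E_W)}$ in the correct direction to transfer nonnegativity from the larger space back down to $W$.
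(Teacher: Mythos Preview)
Your proof is correct and follows essentially the same route as the paper: establish $W\subset H^0(X,E_W)$ with both generating $E_W$, convert the hypothesis on $w_{H^0(X,E_W),k}$ into nonnegativity of the genuine weight $w_{H^0(X,E_W)}$, and then apply Lemma~\ref{compare}. Your treatment of the second step is in fact more explicit than the paper's---you correctly isolate that the identification $w_{H^0(X,E_W)} = nn'\cdot w_{H^0(X,E_W),k}$ comes directly from the Riemann--Roch substitution, which is legitimate precisely because the subspace in question is $H^0(X,E_W)$ itself; the paper phrases this as an appeal to Proposition~\ref{h0h10}, but the content is the same.
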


\begin{proof}
We have $W \subset H^0(X,E_W)$ and both $W, H^0(X,E_W)$ generate $E_W$.
By Proposition \ref{h0h10} applied to $H^0(X,E_W)\subset V$ we have $w_{H^0(X,E_W)}= w_{H^0(X,E_W),k} \ge 0$ . By Lemma \ref{compare}, $w_{W} \ge w_{H^0(X,E_W)}\ge 0$.
\end{proof}

Let $K_X$ denote the canonical line bundle of $X$.

\begin{lem}\label{E'_W}
There exists a subsheaf $E'_W \subseteq E_W$ 
such that $E'_W$ is globally generated, 
$$H^1(X, E'_W)=0 \ {\rm and \ if} \ w_{H^0(E'_W)} \ge 0, \ {\rm then} \ w_{H^0(E_W)}\ge 0\, .$$
Thus, if $ E_W$ destabilizes, so does $E'_W$.
\end{lem}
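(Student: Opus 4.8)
The plan is to exploit the fact that $E_W$ is, by its very definition, the image of $W\otimes\mathcal{O}_X\to E$, and hence is already globally generated; so the only property we must arrange is the vanishing $H^1(X,E'_W)=0$, while preserving global generation and not increasing the weight. If $H^1(X,E_W)=0$ we take $E'_W=E_W$ and there is nothing to do, so assume $H^1(X,E_W)\neq 0$. I would then induct on $\mathrm{rank}(E_W)$, peeling off one map to $K_X$ at a time by Serre duality, exactly in the spirit of the proof of Lemma~\ref{bounded}.

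Concretely, Serre duality gives $H^1(X,E_W)^\ast\cong\Hom(E_W,K_X)$, so $H^1\neq 0$ produces a nonzero $\phi\colon E_W\to K_X$. Writing $I:=\mathrm{im}(\phi)\subseteq K_X$, a torsion-free rank-one quotient with $0\le\deg I\le 2g-2$ (the lower bound because $I$ is a quotient of the globally generated $E_W$), I obtain
\[
0\longrightarrow N\longrightarrow E_W\stackrel{\phi}{\longrightarrow} I\longrightarrow 0,\qquad \mathrm{rank}(N)=\mathrm{rank}(E_W)-1.
\]
I then set the candidate to be the subsheaf of $N$ generated by its global sections: this is globally generated of rank strictly less than $\mathrm{rank}(E_W)$, so the induction hypothesis applies to it and yields a globally generated, $H^1$-free subsheaf $E'_W$.

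The heart of the matter is the weight comparison $w_{H^0(E'_W)}\le w_{H^0(E_W)}$. Here I would use the explicit weight at $W=H^0$: combining $\dim H^0(E_W)=\delta'+(1-g)n'+h^1(E_W)$ with $p=\delta+(1-g)n$ (recall $H^1(X,E)=0$ on $R$), the leading part of $w_{H^0(E_W)}$ is $\delta n'-\delta' n-h^1(E_W)\,n$, while the remaining boundary contribution is $\eta\sum_i[\,pt'_i-p't_i+r'_i(p-p')\,]$. Since $\eta=1/(k-g+\tfrac12)$ can be taken arbitrarily small and the multiplicities $s'_i,t'_i,r'_i$ are bounded by $n$, the sign of $w_{H^0}$ is governed by the leading term up to an $O(\eta)$ error, just as in the proof of Proposition~\ref{h0h10}. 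I would then verify that passing from $E_W$ to $E'_W$ changes rank, degree and the number of sections so that this leading term does not increase, while bounding the $O(\eta)$ change in the boundary data at the $p_i$ induced by the elementary modification $\phi$.

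The step I expect to be the main obstacle is precisely this comparison: one must simultaneously control the rank drop by one, the degree drop $\deg I\in[0,2g-2]$, the decrease of $h^1$, and the change in the boundary multiplicities $(s'_i,t'_i,r'_i)$ at the marked points, and show their combined effect on $w_{H^0}$ is non-positive (with the strict version compatible with the convention $(\le)$ in the stable case). A secondary subtlety is the degenerate situation in which the globally generated subsheaf of $N$ drops rank further or vanishes; there the leading term $\delta n'-\delta' n$ is already large, and one argues directly that $w_{H^0(E_W)}\ge 0$, so the implication holds. Once the comparison is established, the induction on rank terminates at a globally generated subsheaf with $H^1=0$, and the contrapositive of the weight inequality yields the final assertion that if $E_W$ destabilizes then so does $E'_W$.
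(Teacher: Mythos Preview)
Your approach---iterate the Serre-duality kernel construction, take globally generated subsheaves, then compare weights with the $\eta$-contribution treated as small---is the same as the paper's. The piece you correctly flag as the main obstacle has a simple resolution that you do not isolate: each step of the construction loses at most $g$ global sections. From $0\to N\to E_W\to I\to 0$ with $I\subseteq K_X$ one gets $h^0(N)\ge h^0(E_W)-g$, and the globally generated subsheaf of $N$ has the same $H^0$ as $N$. After $m$ steps (where $m=n'-n''$ is the total rank drop) this yields $p'\le p''+mg$, and then directly
\[
w_{W'}-w_{W''}\;\ge\; p(n'-n'')-(p'-p'')\,n + (\text{$\eta$-term})\;\ge\; m(p-gn)+(\text{$\eta$-term}),
\]
with $p-gn=\delta+(1-2g)n$ growing like $k$ while the $\eta$-term stays bounded (the paper invokes $w_{W''}\ge 0$ to bound $p''$; in fact $p''\le p$ already suffices, since the boundary multiplicities are all $\le n$). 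So the comparison is immediate once $k$ is large.

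Your rewriting of the leading term as $\delta n'-\delta' n - h^1(E_W)\,n$ is correct---it is just $pn'-p'n$ via Riemann--Roch---but less efficient: tracking $p'=h^0$ directly avoids having to control degree and $h^1$ separately through the iteration, which is what makes your sketch feel harder than it is. Likewise, your induction scheme commits you to a one-step weight comparison at each stage, whereas the paper performs a single comparison between $E_W$ and the terminal $E'_W$; either organization works, but you should be explicit about which you are doing and supply the section bound above in the corresponding form.
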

\begin{proof}
If $H^1(X, E_W )= 0$, take $E'_W = E_W$.

If $H^1(X, E_W )\neq 0$, one has a map $E_W\longrightarrow K_X$, 
and an exact sequence
$$0\longrightarrow F \longrightarrow E_W\longrightarrow K_X$$
giving a rank $n'-1$ subbundle $F$ with $H^0(X,F)\geq (p'-g)$. Let 
$E^1_W$ be the subsheaf of $F$ generated by the global sections; then 
$H^0(X,E^1_W)\geq (p'-g)$. If $H^1(X, E^1_W)\neq 0$, 
we can then produce in a similar way a subsheaf $E^2_W \subset E^1_W$ of rank $n'-2$, with
$H^0(X,E^2_W)\geq (p'-2g)$. 
This process eventually terminates, \\
(1) either at a subsheaf $E^i_W$ with
$$H^0(X,E^i_W)\geq (p'-ig)~\, ~\text{and}~\, ~ H^1(X, E^i_W)= 
0$$
for some $i$, \\
(2) or at a line bundle $E^{n'-1}_W$ with 
$$H^0(X,E^{n'-1}_W)\geq 
(p'-(n'-1)g) ~\, ~\text{and}~\, ~ H^1(X, E^{n'-1}_W)= 0$$ 
if $p'\geq n'g$. \\
Call $E'_W$ the subsheaf at which the process terminates. Let $$W'= 
H^0(E_W), p'= h^0(E_W), W^{''}=H^0(X, E'_W), p^{''}=h^0(X,E'_W)\, .$$ 
Let 
$n',r',t'$ and $r'',t''$ be the corresponding quantities for $E_W$ and 
$E'_W$. 
The expression for $w_{W'}$ in (\ref{W-inequality}) can be 
rewritten as
$$w_{W'} \,=\, p(n'+\eta \sum_i(t'_i+r'_i)) - p'(n+ \eta 
\sum_i(r'_i+t_i))\, . $$
Using $p'\le p^{''}+mg, m\ge 1, \, n'=n''+m$, this gives 
$$w_{W'} \, \ge p(n''+m+ \eta \sum_i(t'_i+r'_i)) - (p''+mg)(n+ \eta 
\sum_i(r'_i+t_i))\, .$$ 
Hence we get
$$w_{W'}- w_{W^{''}} \, \ge\, 
pm-mgn+\eta\Bigl[\sum_i(p(t'_i+r'_i-t''_i-r''_i)-p''(r'_i-r''_i)-mg(t_i+r''_i)\Bigr]\, .$$
If $w_{W^{''}}\ge 0$, then $p'' \,\le\, \frac{pn}{n''}+ C'$, with $C'$ a 
constant. 
Substituting this in the expression for $w_{W'}- w_{W^{''}}$, one sees that for $k$ large ($k \ge$ a constant), 
 $\eta$ is small enough so that $w_{W'}- w_{W^{''}}\ge 0$.
\end{proof}

\begin{prop}\label{rightiml}
If every subsheaf $E'$ of $E$ satisfies the conditions 
$$
0\quad \leq\quad S^1(E'):= \frac{\delta_0}{n} - 
\frac{\delta'_0}{n'} + [\sum_i[(\frac{r'_i +t'_i}{n'}) - (
\frac{r'_i+t_i}{n}) ]\, ,
$$
and if one has equality, 
 $$0\,\leq\, S^2(E'):= \, \Bigl[\frac{\delta_0}{n} - 
\frac{\delta'_0}{n'} \Bigr]\Bigl[- \frac{\delta_0}{n}-\frac{1}{2}
 + \sum_i( \frac{r'_i+t_i}{n})\Bigr]\, ,
 $$
 then $(\alpha, \beta)$ is k-${\rm SL}(V)$-semistable.

If every subsheaf $E'$ of $E$ satisfies the conditions 
$$S^1(E') \ge 0\, , \ {\rm and \ for } \ S^1(E')=0 \ {\rm one \ has} \ S^2(E')>0\, ,$$
then $(\alpha, \beta)$ is k-${\rm SL}(V)$-stable.
\end{prop}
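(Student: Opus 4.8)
The plan is to reduce the geometric stability statement to the combinatorial condition on weights $w_W$ that has already been analyzed, using the Hilbert-Mumford criterion as set up in the preceding subsection. Recall that $(\alpha,\beta)$ is $\mathrm{SL}(V)$-semistable (respectively stable) for the parameter $\eta$ precisely when $0\ (\leq)\ w_W$ for every subspace $W\subset V$, and that Proposition \ref{h0h10} converts the $k$-stability condition $w_{W,k}$ into the limiting conditions $w_{W,\infty}$ (the sign of $S^1$) together with $w_{W,A}$ (the sign of $S^2$ in the equality case). So the essential content to be proved is that the hypotheses $S^1(E')\ge 0$ (and $S^2(E')>0$ when $S^1(E')=0$), stated for all \emph{subsheaves} $E'$ of $E$, suffice to force $w_{W,k}\ge 0$ (respectively $>0$) for all \emph{subspaces} $W\subset V$.

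The key step is to pass from subspaces $W$ to the subsheaves they generate. First I would reduce, via Lemma \ref{compare}, to the case where $W=H^0(X,E_W)$ is the full space of sections of the subsheaf $E_W$ it generates: since enlarging $W$ within a fixed generated subsheaf only decreases $w_W$, it is enough to verify the inequality for the largest such $W$, namely $H^0(X,E_W)$. Next I would use Lemma \ref{E'_W} and Corollary \ref{h10} to reduce to globally generated subsheaves with vanishing $H^1$: Lemma \ref{E'_W} produces a subsheaf $E'_W\subseteq E_W$ with $H^1(X,E'_W)=0$ such that $w_{H^0(E'_W)}\ge 0$ implies $w_{H^0(E_W)}\ge 0$, so it suffices to check the weight inequality on these good subsheaves. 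For such a subsheaf, $H^1=0$ and $h^0=p'$ hold, which is exactly the hypothesis under which $w_{W,\infty}$ equals $S^1(E_W)$ and the equality-case expression equals $S^2(E_W)$.

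With these reductions in place, the argument closes quickly. For a subspace $W$ with $E_W$ globally generated and $H^1(X,E_W)=0$, the substitution $p=\delta+(1-g)n$, $p'=\delta'+(1-g)n'$ identifies $w_{W,\infty}$ with $S^1(E_W)$ and $w_{W,A}$ with $S^2(E_W)$, so the hypotheses give $w_{W,\infty}\ge 0$, and $w_{W,A}>0$ (respectively $\ge 0$) in the equality case. Proposition \ref{h0h10}, applied for $k>C$, then yields $w_{W,k}\ge 0$ (respectively $>0$). Tracing back through Corollary \ref{h10} and Lemma \ref{E'_W} recovers $w_{H^0(E_W),k}\ge 0$, and Lemma \ref{compare} propagates this to the original $W$. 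Quantifying over all $W$ gives $k$-$\mathrm{SL}(V)$-(semi)stability of $(\alpha,\beta)$.

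The main obstacle I anticipate is the bookkeeping in the reduction to globally generated subsheaves, and in particular ensuring the two reduction lemmas interface correctly in both the semistable and stable cases. One must be careful that the passage from $E_W$ to $E'_W$ in Lemma \ref{E'_W}, which is phrased for the non-strict inequality $w_{H^0(E'_W)}\ge 0$, is compatible with the strict inequalities demanded for stability; the cleanest route is to argue that a destabilizing $W$ (violating the appropriate inequality) would produce a destabilizing subsheaf among the good ones, and then invoke the contrapositive. A secondary point requiring care is that $S^1$ and $S^2$ are stated in terms of arbitrary subsheaves $E'$ whereas the weight computation naturally produces the generated subsheaves $E_W$; but every $E_W$ is a subsheaf of $E$, so the hypothesis applies directly, and the saturation/subbundle subtleties do not obstruct the inequality in the direction needed.
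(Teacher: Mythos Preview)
Your proposal is correct and follows essentially the same route as the paper: reduce an arbitrary $W$ to $H^0(E_W)$ via Lemma~\ref{compare}, pass to a globally generated subsheaf $E'_W$ with $H^1=0$ via Lemma~\ref{E'_W}, identify $w_{W'',\infty}=S^1(E'_W)$ and $w_{W'',A}=S^2(E'_W)$, and then invoke Proposition~\ref{h0h10} to obtain $w_{W'',k}\,(\ge)\,0$ and trace back. Your caution about the strict case is well placed but harmless, since the reductions in Lemmas~\ref{compare} and~\ref{E'_W} are genuine inequalities $w_W\ge w_{H^0(E_W)}\ge w_{W''}$, so a strict $w_{W'',k}>0$ propagates to $w_W>0$.
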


\begin{proof}
Suppose that every subsheaf of $E$ satisfies the conditions in the statement of Proposition \ref{rightiml}. 
Let $W \subset V$ be a subspace and $E_W$ the subsheaf of $E$ generated by $W$. 
By Lemma \ref{E'_W}, there exists a subsheaf $E'_W \subseteq E_W, W''=H^0(X,E'_W)$ 
such that $E'_W$ is globally generated, 
$H^1(X, E'_W)=0 \ {\rm and \ if} \ w_{W''} \ge 0, \ {\rm then} \ w_{H^0(E_W)}\ge 0\, .$
One has $w_{W'', \infty}=S^1(E'_W), w_{W'',A}=S^2(E'_W)$. 
 Since $E'_W \subset E$ satisfies the conditions in the statement of Proposition \ref{rightiml},
by Proposition \ref{h0h10}(1), $w_{W''}= w_{W'',k} \ (\ge)\, 0$. 
Hence by Lemma \ref{E'_W}, $w_{H^0(E_W)}\, (\ge)\, 0$. 
 By Lemma \ref{compare}, $w_{W} \ge w_{H^0(X,E_W)}\, (\ge)\, 0$. 
Thus $(\alpha, \beta)$ is $k$-${\rm SL}(V)$-(semi)stable.
\end{proof}

To prove the converse of Proposition \ref{rightiml}, 
we need the following lemma.

\begin{lem}\label{leftlem}
Suppose that there is a subsheaf $F$ of $E$ satisfying the conditions 
$S^1(F) \le 0$ and if $S^1(F)=0$, then $S^2(F) < 0$ (respectively, 
$S^2(F)\,=\, 0$). 
Then for degree $E$ large, there exists a subsheaf $E' \subset E$ 
satisfying the same respective conditions and with $H^1(X,E')=0$. 
\end{lem}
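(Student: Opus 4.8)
The plan is to kill $H^1$ by repeatedly dividing out the low-degree pieces detected by Serre duality, and then to observe that each such step strictly raises the twist-normalized slope of the subsheaf, so that for $\deg E$ large a single genuine step already forces $S^1<0$. This bypasses any delicate $S^2$-bookkeeping: the only way to have $H^1\neq 0$ is to be over-destabilizing.

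First, if $H^1(X,F)=0$ there is nothing to do: take $E'=F$, which satisfies the stated conditions by hypothesis. So assume $H^1(X,F)\neq 0$. By Serre duality $\Hom(F,K_X)\neq 0$, so there is a nonzero map $F\to K_X$; its image is a rank-one subsheaf $L\subseteq K_X$, whence $\deg L\le \deg K_X=2g-2$, and its kernel $F_1\subseteq F$ has rank ${\rm rank}(F)-1$ and $\deg F_1=\deg F-\deg L\ge \deg F-(2g-2)$. Iterating, I would build a strictly decreasing chain $F=F_0\supsetneq F_1\supsetneq\cdots$ of subsheaves of $E$ in which the rank drops by exactly one and the degree drops by at most $2g-2$ at each step, stopping at the first $F_j$ with $H^1(X,F_j)=0$. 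Since the degree stays large (it has dropped by at most $(n'-1)(2g-2)$) while the rank decreases, the process terminates no later than when it reaches a line bundle of degree $>2g-2$, which automatically has vanishing $H^1$. Set $E'=F_j$; then $1\le {\rm rank}(E')=n'-j$ and $H^1(X,E')=0$.

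The crux is the degree estimate. Writing $\delta'_0(F)$ and $\delta'_0(E')$ for the untwisted degrees (so $\deg F=\delta'_0(F)+kn'$ and $\deg E'=\delta'_0(E')+k(n'-j)$), the bound $\deg E'\ge \deg F-j(2g-2)$ gives
\[
\delta'_0(E')\ \ge\ \delta'_0(F)+j\bigl(k-(2g-2)\bigr).
\]
When $j\ge 1$ and $k$ is large this makes $\delta'_0(E')$ grow linearly in $k$; since the hypothesis $S^1(F)\le 0$ forces $\delta'_0(F)/n'\ge \delta_0/n+\sum_i[\tfrac{r'_i+t'_i}{n'}-\tfrac{r'_i+t_i}{n}]\ge \delta_0/n-2\ell$, the quantity $\delta'_0(F)$ is bounded below independently of $k$, and so $\delta'_0(E')\to\infty$. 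The framing contribution $\sum_i[\tfrac{r'_i+t'_i}{n'-j}-\tfrac{r'_i+t_i}{n}]$ to $S^1(E')$ is bounded in absolute value by $2\ell$, uniformly in $k$. Hence, using $1\le n'-j\le n$,
\[
S^1(E')=\frac{\delta_0}{n}-\frac{\delta'_0(E')}{n'-j}+\bigl(\text{term bounded by }2\ell\bigr)\ \longrightarrow\ -\infty,
\]
so $S^1(E')<0$ for $\deg E$ large. In particular $E'$ satisfies $S^1(E')\le 0$, and the clause ``if $S^1(E')=0$ then $S^2(E')<0$ (respectively $=0$)'' holds vacuously; thus both respective cases are settled simultaneously.

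The main obstacle is making this estimate uniform in $k$. I must verify that the two error sources — the cumulative degree loss, bounded by $(n'-1)(2g-2)$, and the framing terms, bounded by $2\ell$ — are $O(1)$ in $k$ and so cannot offset the gain $j(k-(2g-2))$, which is at least $k-(2g-2)$ once $j\ge 1$. This is precisely where the hypothesis ``$\deg E$ large'' enters and where the constant is pinned down: any $k$ for which $k-(2g-2)$ exceeds $n\bigl(|\delta_0|/n+2\ell+(n-1)(2g-2)\bigr)$ already forces $S^1(E')<0$. One also checks the two structural points that each $F_{i+1}$ is genuinely a subsheaf of $F_i\subseteq E$, and that the chain never reaches rank zero (it halts at the first vanishing of $H^1$, which occurs no later than rank one). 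These verifications are routine once the slope-raising mechanism of the kernel construction is isolated.
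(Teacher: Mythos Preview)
Your argument is correct and genuinely different from the paper's. The paper proceeds by an extremal choice: it picks $E'$ minimizing $S^1$ over all subsheaves in the destabilizing set, and then argues by contradiction that $H^1(X,E')=0$. If not, a nonzero map $E'\to K_X$ has kernel $E''$ of rank $n'-1$ and degree $\ge \delta'-(2g-2)$; minimality forces $S^1(E')\le S^1(E'')$, while the degree estimate combined with $S^1(E')\le 0$ (which bounds $\delta'/n'$ from below by roughly $\delta/n$) forces the opposite strict inequality once $\delta$ is large. The contradiction gives $H^1=0$ for the minimizer itself.

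You instead start from the given $F$ and iterate the kernel construction until $H^1$ vanishes, then observe that whenever at least one step occurred the untwisted degree $\delta'_0(E')$ has been pushed up by at least $k-(2g-2)$, so that $S^1(E')\to -\infty$ uniformly and the $S^2$ clause becomes vacuous. The uniform bound works because the framing contribution to $S^1$ is $O(\ell)$, the cumulative degree loss is at most $(n-1)(2g-2)$, and $\delta'_0(F)$ is bounded below independently of $k$ by the hypothesis $S^1(F)\le 0$; all of these you identify correctly. Your approach avoids the extremal argument at the cost of a short iteration, and has the pleasant feature that it dispenses with any $S^2$-tracking, handling both ``respective'' cases at once. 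It is also close in spirit to the paper's Lemma~\ref{E'_W}, which uses the same kernel-of-a-map-to-$K_X$ descent for a different purpose.
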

\begin{proof} 
Suppose that $E$ has a subsheaf $F$ satisfying the conditions 
$S^1(F) \le 0$ and if $S^1(F)=0$, then $S^2(F) < 0$. 
Take $E'\subset E$ be a subsheaf of rank $n'$ and degree $\delta'$ such that 
\begin{equation}\label{min}
S^1(E') = \ {\rm min} \{ S^1(F) \mid S^1(F)\le 0 \ {\rm and \  if} \ S^1(F)=0, \ {\rm then} \  S^2(F) <0\, \}\, . 
\end{equation}
Suppose that $H^0(X, E'^*\otimes K_X)\neq 0$, i.e., there is a nonzero 
homomorphism 
$$\phi: E' \longrightarrow K_X\, .$$ Let $E''\,=\, {\rm Ker} (\phi)$, 
and let $n'', \delta''$ be its rank and degree respectively. 
By the choice of $E', \ S^1(E') \le S^1(E'')$ which gives 
\begin{equation}\label{ii}
\frac{\delta''}{n''} \le \frac{\delta'}{n'} + D \, ,\end{equation}
where
$$D:= \sum_i[(\frac{r''_i +t''_i}{n'-1}) - (\frac{r'_i+t'_i}{n'}) + (\frac{r'_i-r''_i}{n})]\, .$$

One has $n''= n'-1, \ \delta'' \ge \delta' -2g+2\, .$ Hence 
\begin{equation}\label{i}
\frac{\delta''}{n''} \, \ge \, \frac{\delta'}{n'-1} +\frac{2-2g}{n'-1} 
\,=\, \frac{\delta'}{n'(n'-1)} + \frac{2-2g}{n'-1}-D + 
\frac{\delta'}{n'} + D\, . \\
\end{equation}

Since $S^1(E')\le 0$, we have 
$$\frac{\delta'}{n'} \,\ge\,
\frac{\delta}{n} + \sum_i[(\frac{r'_i +t'_i}{n'}) - 
(\frac{r'_i+t_i}{n}) ]\, .
$$
Hence for $\delta$ larger than a constant, we have 
$$\frac{\delta'}{n'(n'-1)} + \frac{2-2g}{n'-1}-D >0\, .$$
Then $\frac{\delta''}{n''} \,>\, \frac{\delta'}{n'} + D\, ,$ 
contradicting the inequality \eqref{ii}. This proves the lemma.

In case $S^1(E') \,\le\, 0$, and for $S^1(E')\,=\,0$ 
one has $S^2(E')\,=\,0$, 
 we only need to change $S^2 F <0$ to $S^2(F)=0$ in \eqref{min} 
in the choice of $E'$. 
\end{proof}

\begin{prop}\label{leftiml}
The point $(\alpha, \beta)$ is k-${\rm SL}(V)$-(semi)stable for $k\ge 
k_0(n,g,\ell)$ 
if and only if every subsheaf $E'$ of $E$ satisfies the conditions 
$$S^1(E') \ge 0\, , \ {\rm and \ if } \ S^1(E')=0, \ {\rm then} \ S^2(E') 
\,(\ge)\, 0\, .$$
\end{prop}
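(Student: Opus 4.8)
The plan is to read this proposition as the amalgamation of two halves, one of which is already in hand. The ``if'' direction---that the numerical conditions on all subsheaves imply $k$-${\rm SL}(V)$-(semi)stability---is exactly Proposition \ref{rightiml}, so I would cite it verbatim and concentrate all the work on the ``only if'' direction. For the latter I would argue by contraposition: assuming some subsheaf of $E$ violates the stated numerical conditions, I would exhibit a subspace $W\subset V$ with $w_{W,k}\le 0$ (and strictly negative in the semistable case), thereby showing $(\alpha,\beta)$ is not $k$-${\rm SL}(V)$-(semi)stable.

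First I would record precisely what the failure of the conditions provides. If the semistability condition fails, there is a subsheaf $F$ with $S^1(F)<0$, or with $S^1(F)=0$ and $S^2(F)<0$; if semistability holds but stability fails, there is instead an $F$ with $S^1(F)=0$ and $S^2(F)=0$. In every case $F$ satisfies the hypotheses of Lemma \ref{leftlem} in the appropriate respective version. The reason this reduction is indispensable is that the identification of the Hilbert-Mumford weight with the numerical invariants requires $H^1(X,E_W)=0$ together with $W=H^0(X,E_W)$: only then do the Riemann--Roch substitutions $p=\delta+(1-g)n$ and $p'=\delta'+(1-g)n'$ hold, so that $w_{W,\infty}=S^1(E')$ and $w_{W,A}=S^2(E')$. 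Accordingly I would invoke Lemma \ref{leftlem} to replace $F$ by a subsheaf $E'$ with $H^1(X,E')=0$ satisfying the same respective condition, and set $W=H^0(X,E')$.

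Then I would feed this $W$ into Proposition \ref{h0h10}. Because $H^1(X,E')=0$ we have $w_{W,\infty}=S^1(E')\le 0$, and in the boundary case $S^1(E')=0$ we have $w_{W,A}=S^2(E')$. In the semistability-violating case, either $w_{W,\infty}<0$, in which case the implication $(b')$ established within the proof of Proposition \ref{h0h10} gives $w_{W,k}<0$; or $w_{W,\infty}=0$ with $w_{W,A}<0$, in which case Proposition \ref{h0h10}(1) forces $w_{W,k}<0$. Either way $w_{W,k}<0$, contradicting $k$-semistability. In the stability-boundary case $S^1(E')=0=S^2(E')$, Proposition \ref{h0h10}(3) gives $w_{W,k}=0$, which violates the strict inequality $w_{W,k}>0$ demanded by $k$-stability. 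This settles the contrapositive for semistability and stability simultaneously, and the choice $\eta=1/(k-g+\frac12)$ together with $k\ge k_0(n,g,\ell)$ supplies the largeness of $k$ needed in Proposition \ref{h0h10} and Lemma \ref{leftlem}.

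The main obstacle is the cohomological reduction carried out by Lemma \ref{leftlem}: a destabilizing subsheaf need not have vanishing $H^1$, so one cannot directly read off $w_{W,\infty}$ and $w_{W,A}$ as $S^1$ and $S^2$, nor is the count $p'=\dim W$ available. Producing a subsheaf $E'$ with $H^1(X,E')=0$ that still destabilizes---through the successive-kernel construction and the minimization of $S^1$ over the relevant family---is the delicate part; once $H^1(X,E')=0$ is secured, the passage from the numerical inequalities $S^1,S^2$ to a negative or zero weight $w_{W,k}$ is a direct and formal application of Proposition \ref{h0h10}.
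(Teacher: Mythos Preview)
Your proposal is correct and follows essentially the same route as the paper: cite Proposition~\ref{rightiml} for the ``if'' direction, and for the ``only if'' direction argue by contraposition, invoking Lemma~\ref{leftlem} to replace a destabilizing $F$ by an $E'$ with $H^1(X,E')=0$, then set $W=H^0(X,E')$ and apply Proposition~\ref{h0h10} to conclude $w_{W,k}<0$ (respectively $=0$). One small inaccuracy in your closing commentary: the ``successive-kernel construction'' you refer to is the mechanism of Lemma~\ref{E'_W}, not of Lemma~\ref{leftlem}; the latter uses a single kernel-of-a-map-to-$K_X$ step combined with the minimization of $S^1$ to derive a contradiction, rather than an iterated descent.
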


\begin{proof}
In view of Proposition \ref{rightiml}, it remains to show that 
if $(\alpha, \beta)$ is ${\rm SL}(V)$-(semi)stable, then 
every subsheaf $E'$ of $E$ satisfies the conditions 
$$S^1(E') \ge 0\, , \ {\rm and \ if } \ S^1(E')=0, \ {\rm then} \ 
S^2(E')\, (\ge)\, 0\, .$$

Suppose that there is a subsheaf $F \subset E$ such that 
$S^1(F) \le 0$ and if $S^1(F)=0$, then $S^2(F) <0$ (respectively, 
$S^2(F)=0$). By Lemma \ref{leftlem}, 
there is a subsheaf $E' \subset E$ satisfying the same 
respective conditions and with $H^1(X,E')=0$. 
Then for $W= H^0(X,E')$, one has $w_{W,\infty}=S^1(E')$ and $w_{W,A}= 
S^2(E')$. By Proposition 
\ref{h0h10}, this implies that $w_{W}<0$ (respectively, $w_{W}=0$) 
contradicting 
the ${\rm SL}(V)$-semistability (respectively, stability) of $(\alpha, 
\beta)$. 
\end{proof}

We would like to have conditions in Proposition \ref{leftiml} to be converted into conditions for subbundles of $E$. Let $E' \subset E$ be a subsheaf and $E^c$ the minimal subbundle of $E$ containing $E'$. Define a subsheaf $\widehat{E} \subset E$ by 
\begin{equation}\label{ehat}
0 \longrightarrow \widehat{E} \longrightarrow E^c \longrightarrow 
\oplus_i E^c_{p_i}/(E'_{p_i}+(E^c_{p_i}\cap g_i)) \longrightarrow 
0\, .
\end{equation}
Then $\widehat{E}$ satisfies the following conditions:
 
1) $\widehat{E} = E^c$ away from $p_i$, 

2) $\widehat{E}_{p_i}\cap g_i= E^{c}_{p_i}\cap g_i$, 

and since $E'_{p_i}\cap (E^c_{p_i}\cap g_i)=E'_{p_i}\cap g_i$, 

3) $E'_{p_i}/(E'_{p_i}\cap g_i) \cong \widehat{E}_{p_i}/\widehat{E}_{p_i}\cap g_i$.

The condition 2) says that $s^c_i= \widehat{s}_i$ and the condition 3) 
gives $m'_i- s'_i
\,=\, \widehat{m}_i-\widehat{s}_i$ or equivalently,
$$s^c_i\,=\, \widehat{s}_i\, , ~ r'_i+ t'_i \,=\, \widehat{r}_i+ 
\widehat{t}_i\, .$$ 
Moreover, $s^c_i= \widehat{s}_i$ and $E^c_{p_i}\cap \Pi g_i \supseteq 
E'_{p_i}\cap \Pi g_i$, 
hence $r^c_i \ge \widehat{r}_i$ with equality holding if and only if 
$E^c_{p_i}\cap \Pi g_i = E'_{p_i}\cap \Pi g_i$.

We have
$$S^1(E') -S^1(\widehat{E})\,=\, \frac{\widehat{\delta}_0 - 
{\delta}'_0}{n'}
+ \sum_i \frac{\widehat{r}_i-r'_i}{n}\, .$$ 
{}From the defining sequences of $E'$ and $\widehat{E}$, it follows that 
$$\widehat{\delta}_0 - {\delta}'_0\,= \,\sum_i \ \dim
\widehat{E}_{p_i}/E'_{p_i}
+ \delta(T)\, ,$$ where $T$ is a torsion sheaf supported outside the 
$\{p_1\, ,\cdots\, ,p_\ell\}$. 
Hence $$\widehat{\delta}_0 - {\delta}'_0\,=\, \sum_i [(m'_i 
+s^c_i-s'_i)-m'_i]+\delta(T)
\,=\, \sum_i (s^c_i-s'_i) + \delta(T)\, .$$
Therefore,
$$S^1(E') -S^1(\widehat{E})= \sum_i \frac{(s^c_i-s'_i)}{n'} 
+ \sum_i \frac{\widehat{r}_i-r'_i}{n}+ \frac{\delta(T)}{n'}
\ge \sum_i (\frac{(\widehat{s}_i +\widehat{r}_i)- (s'_i+r'_i)}{n}+ \frac{\delta(T)}{n'}$$
with equality holding if and only if $n=n'$.
Now, $$\widehat{s}_i +\widehat{r}_i\,=\, \dim 
(\widehat{E}_{p_i}\cap \Pi g_i)\, , 
s'_i+r'_i\,= \,\ \dim ({E'}_{p_i}\cap \Pi g_i)\, ,$$ hence
$$
S^1(E') \,\ge\, S^1(\widehat{E}) + \delta(T)/n'$$
with equality holding if and only if $n=n'$ and 
$\widehat{E}_{p_i}\cap \Pi g_i= {E'}_{p_i}\cap \Pi g_i$ for all $i$.

We now compare $S^1(\widehat{E})$ and $S^1(E^c)$. From the sequence (\ref{ehat}), 
$$\widehat{\delta}_0 \,=\, \delta^c_0 + \sum_i 
[(r'_i+t'_i)-(r^c_i+t^c_i)]\, .$$ 
Substituting for $\widehat{\delta}_0/n'$ in $S^1(\widehat{E})$, we have
$$S^1(\widehat{E})\,=\, \frac{\delta_0}{n}- 
\frac{\delta_0^c}{n'}+ 
\sum_i [\frac{(r^c_i+t^c_i)}{n'}- \frac{\widehat{r}_i+t_i}{n}]
= S^1(E^c) + \sum_i \frac{r^c_i-\widehat{r}_i}{n}
\ge S^1(E^c)\, ,$$
with equality holding if and only if $r^c_i= \widehat{r}_i$ for 
all $i$. Hence
$$S^1(E') \,\ge\, S^1(\widehat{E}) +\delta(T)/n' \ge S^1(E^c) + 
\delta(T)/n'$$
with equality holding if and only if $n=n'$, 
$r^c_i= \widehat{r}_i$ and $\widehat{E}_{p_i}\cap \Pi g_i= 
{E'}_{p_i}\cap \Pi g_i$ for all $i$.

Thus we have proved the following lemma.

\begin{lem}\label{scompare} 
We have
\begin{equation}
S^1(E') \,\ge\, S^1(\widehat{E}) +\delta(T)/n' \,\ge\, S^1(E^c) + 
\delta(T)/n'
\end{equation}
with $S^1(E')= S^1(E^c)$ if and only if $n=n', T=0$ and 
${E^c}_{p_i}\cap \Pi g_i= \widehat{E}_{p_i}\cap \Pi g_i= {E'}_{p_i}\cap \Pi g_i$ for all $i$.
\end{lem}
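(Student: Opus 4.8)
The plan is to prove the chain of inequalities by comparing $S^1$ across the two short exact sequences that link $E'$, the auxiliary sheaf $\widehat{E}$ defined in \eqref{ehat}, and the saturation $E^c$, all three of which have rank $n'$. First I would assemble the pointwise data at each marked point that comes out of \eqref{ehat}: namely $\widehat{E} = E^c$ off the $p_i$, $\widehat{E}_{p_i} \cap g_i = E^c_{p_i} \cap g_i$, and $E'_{p_i}/(E'_{p_i} \cap g_i) \cong \widehat{E}_{p_i}/(\widehat{E}_{p_i} \cap g_i)$. These give the numerical identities $s^c_i = \widehat{s}_i$ and $r'_i + t'_i = \widehat{r}_i + \widehat{t}_i$, as well as the inclusion $E^c_{p_i} \cap \Pi g_i \supseteq E'_{p_i} \cap \Pi g_i$, whence $r^c_i \geq \widehat{r}_i$ with equality exactly when those two intersections coincide. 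I would also note the inclusion $E' \subseteq \widehat{E}$, which gives $s'_i \leq \widehat{s}_i$ and $E'_{p_i} \cap \Pi g_i \subseteq \widehat{E}_{p_i} \cap \Pi g_i$.

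Next I would compute the two degree jumps. From the defining sequence of $\widehat{E}$ one gets $\widehat{\delta}_0 - \delta'_0 = \sum_i \dim(\widehat{E}_{p_i}/E'_{p_i}) + \delta(T) = \sum_i(s^c_i - s'_i) + \delta(T)$, where $T$ is torsion supported off the marked points, so $\delta(T) \geq 0$; from \eqref{ehat} itself one reads $\widehat{\delta}_0 = \delta^c_0 + \sum_i[(r'_i + t'_i) - (r^c_i + t^c_i)]$. Feeding the first relation into the definition of $S^1$ and using $r'_i + t'_i = \widehat{r}_i + \widehat{t}_i$ to cancel the middle terms, the difference $S^1(E') - S^1(\widehat{E})$ collapses to $\sum_i (\widehat{s}_i - s'_i)/n' + \sum_i (\widehat{r}_i - r'_i)/n + \delta(T)/n'$. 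Then the single estimate $1/n' \geq 1/n$ applied to the nonnegative quantities $\widehat{s}_i - s'_i$ turns this into $\sum_i [(\widehat{s}_i + \widehat{r}_i) - (s'_i + r'_i)]/n + \delta(T)/n'$, which is $\geq \delta(T)/n'$ because $\widehat{s}_i + \widehat{r}_i = \dim(\widehat{E}_{p_i} \cap \Pi g_i) \geq \dim(E'_{p_i} \cap \Pi g_i) = s'_i + r'_i$. This yields the first inequality, with equality forcing $n = n'$ together with $\widehat{E}_{p_i} \cap \Pi g_i = E'_{p_i} \cap \Pi g_i$ for all $i$.

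For the second inequality I would substitute the second degree relation into $S^1(\widehat{E})$; after again using $\widehat{r}_i + \widehat{t}_i = r'_i + t'_i$ and cancelling, the difference $S^1(\widehat{E}) - S^1(E^c)$ reduces cleanly to $\sum_i (r^c_i - \widehat{r}_i)/n$, which is $\geq 0$ by the first step, with equality iff $r^c_i = \widehat{r}_i$ for every $i$. Chaining the two comparisons gives the stated double inequality, and for the sharp case $S^1(E') = S^1(E^c)$ one reconciles the two equality criteria: the torsion term must vanish, giving $T = 0$; the rank comparison must be sharp, giving $n = n'$; and both intersection conditions must hold, giving $E^c_{p_i} \cap \Pi g_i = \widehat{E}_{p_i} \cap \Pi g_i = E'_{p_i} \cap \Pi g_i$ for all $i$.

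The main obstacle is not conceptual but organizational: the proof is a sequence of short exact sequences and dimension counts, but one must keep the primed, hatted, and superscript-$c$ versions of $s_i$, $r_i$, $t_i$ rigorously distinct and correctly identify each local degree contribution with the dimension of the relevant intersection with $\Pi g_i$. The one genuinely substantive point is recognizing the combinations $\widehat{s}_i + \widehat{r}_i$ and $s'_i + r'_i$ as $\dim(\widehat{E}_{p_i} \cap \Pi g_i)$ and $\dim(E'_{p_i} \cap \Pi g_i)$ respectively, since that is what makes the first inequality nonnegative and pins down its equality case.
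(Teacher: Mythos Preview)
Your proposal is correct and follows the paper's own argument essentially line by line: the same two degree computations from the defining sequence of $\widehat{E}$, the same reduction of $S^1(E')-S^1(\widehat{E})$ using $r'_i+t'_i=\widehat{r}_i+\widehat{t}_i$, the same application of $1/n'\ge 1/n$ to the nonnegative terms $s^c_i-s'_i$, the same identification of $\widehat{s}_i+\widehat{r}_i$ and $s'_i+r'_i$ with the dimensions of the intersections with $\Pi(g_i)$, and the same substitution giving $S^1(\widehat{E})-S^1(E^c)=\sum_i(r^c_i-\widehat{r}_i)/n$. Your analysis of the equality case also matches the paper's.
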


\begin{thm}\label{s2s}
There exists $k_0\,=\,k_0(n,g,\ell)$ such that for all $k\ge k_0$, 
the point $(\alpha, \beta)$ given by $(E,\vec g)$ is 
k-${\rm SL}(V)$-(semi)stable if and only if for all subbundles $E'$ of 
$E$, 
$$
0\quad \leq\quad \frac{\delta_0}{n} - 
\frac{\delta'_0}{n'} + \sum_i\Bigl[(\frac{n' -s'_i}{n'}) - ( 
\frac{n'-s'_i-t'_i+t_i}{n}) 
\Bigr]\, ,
$$
and, for any $E'$ for which one has equality, 
 $$0\,\leq\, \Bigl[\frac{\delta_0}{n} - \frac{\delta'_0}{n'} \Bigr]\Bigl[- \frac{\delta_0}{n}
- \frac{1}{2} + \sum_i( \frac{n'-s'_i-t'_i+t_i}{n})\Bigr]\, ,$$
 and $$\sum_i s_i\, \ (\leq)\, \ \frac{n(2\ell-1)}{2} - \delta_0\, .$$
\end{thm}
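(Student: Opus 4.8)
The plan is to start from Proposition \ref{leftiml}, which already characterizes $k$-${\rm SL}(V)$-(semi)stability (for every $k\ge k_0(n,g,\ell)$) by the requirement that \emph{every} subsheaf $E'$ of $E$ satisfy $S^1(E')\ge 0$, together with $S^2(E')\,(\ge)\,0$ whenever $S^1(E')=0$. One first notes that conditions \eqref{3} and \eqref{4} of the theorem are exactly $S^1(E')\ge 0$ and $S^2(E')\,(\ge)\,0$ specialized to a \emph{subbundle}: there $m'_i=n'$, so $r'_i=n'-s'_i-t'_i$, whence $r'_i+t'_i=n'-s'_i$ and $r'_i+t_i=n'-s'_i-t'_i+t_i$. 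Thus the whole content of the theorem is that the quantification over all subsheaves in Proposition \ref{leftiml} may be traded for quantification over subbundles at the cost of the single scalar inequality on $\sum_i s_i$.

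To pass from subsheaves to subbundles for the first inequality I would attach to each subsheaf $E'$ its saturation $E^c$ and invoke Lemma \ref{scompare}, which gives $S^1(E')\ge S^1(E^c)$ with an explicit nonnegative defect. Consequently $S^1\ge 0$ for all subbundles forces $S^1\ge 0$ for all subsheaves, so \eqref{3} quantified over subbundles is equivalent to its analogue over all subsheaves; this settles the non-boundary part in both directions.

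The boundary locus $S^1(E')=0$ is where the extra condition is produced. Granting \eqref{3} for subbundles, the equality clause of Lemma \ref{scompare} forces any subsheaf with $S^1(E')=0$ to have full rank ($n'=n$, hence $E^c=E$), to be torsion-free away from the $p_i$, and to satisfy $\Pi(g_i)\subseteq E'_{p_i}$ for all $i$; such an $E'$ is an elementary lowering of $E$ to subspaces $V_i\supseteq\Pi(g_i)$ at the marked points. For these I would compute, as in the earlier derivations of $w_{W,\infty}$ and $w_{W,A}$, that $s'_i=s_i$ and $r'_i=n-s_i-t_i$, so that the second factor of $S^2(E')$ depends only on $\sum_i s_i$ and the discrete data $n,\ell,\delta_0$, while the first factor is a nonnegative multiple; demanding $S^2(E')\,(\ge)\,0$ for all such $E'$ then collapses to a single inequality bounding $\sum_i s_i$, namely the third condition of the theorem. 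Genuine subbundles with $S^1=0$ contribute exactly \eqref{4}, and $E'=E$ gives $S^2=0$ and no constraint. For the converse I would run these implications backwards using Proposition \ref{h0h10}: \eqref{3} over subbundles gives $S^1\ge 0$ for all subsheaves, and on the equality locus \eqref{4} together with the $\sum_i s_i$ bound supply $S^2\,(\ge)\,0$, which is precisely the hypothesis of Proposition \ref{leftiml}.

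The step I expect to be the main obstacle is exactly this boundary bookkeeping: verifying that the full-rank lowerings with $V_i\supseteq\Pi(g_i)$ are the \emph{only} subsheaves beyond honest subbundles that can realize $S^1=0$, and that their $S^2$ genuinely reduces to one inequality independent of the particular $V_i$ chosen. Threading the strict-versus-nonstrict alternatives $(\leq)$ consistently through \eqref{3}, \eqref{4} and the $\sum_i s_i$ bound, while keeping the single constant $k_0=k_0(n,g,\ell)$ of Proposition \ref{leftiml} fixed throughout, is the part demanding care rather than new ideas.
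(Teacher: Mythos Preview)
Your proposal is correct and follows the same route as the paper: reduce via Proposition~\ref{leftiml} to the subsheaf conditions $S^1\ge 0$ (with $S^2\,(\ge)\,0$ on the boundary), use Lemma~\ref{scompare} to pass to subbundles, and on the locus $S^1(E')=0$ observe that the full-rank lowerings produce the $\sum_i s_i$ bound while proper subbundles contribute \eqref{4}. The paper's argument is terser---it invokes the equality clause of Lemma~\ref{scompare} directly and closes with Proposition~\ref{rightiml}---but the content and the case split you anticipate are the same.
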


\begin{proof}
Suppose that $S^1(E^c)\ge 0$ for all subbundles $E^c \subsetneq E$. 
Let $E'$ be a subsheaf of $E$. By Lemma \ref{scompare}, $S^1(E') \ge 
0$ and $S^1(E')=0$ if 
and only if $n=n', T=0$ and 
${E^c}_{p_i}\cap \Pi g_i= \widehat{E}_{p_i}\cap \Pi g_i= {E'}_{p_i}\cap \Pi g_i$ for all $i$. 
Hence if $S^1(E')=0$, then $$r^c_i= \widehat{r}_i=r'_i, s^c_i= 
\widehat{s}_i=s'_i\, .$$
 For $n=n'$, $E^c=E, r^c_i +t_i= n-s_i$. Then 
 $$\sum_i(\frac{r'_i+t_i}{n})= \sum_i(\frac{n-s_i}{n})\, .$$ 
Therefore 
$$S^2(E')= (\frac{{\delta}_0 - {\delta}'_0}{n})[-\frac{1}{2}
- \frac{\delta_0}{n}\sum_i\frac{(r'_i+t_i)}{n}] 
= (\frac{{\delta}_0 - {\delta}'_0}{n})[\sum_i\frac{(n-s_i)}{n}-\frac{1}{2}
- \frac{\delta_0}{n}]\, . $$
Since $(\delta_0 - \delta'_0)/{n}> 0$, 
$$S^2(E')\, \ (\ge)\, \ 0 $$ 
 if and only \ if 
$$-\frac{1}{2}-\frac{\delta_0}{n}+  \sum_i\frac{(n-s_i)}{n} \, \ (\ge)\, \ 0$$ 
i.e., 
$$\sum_i s_i\, \ (\leq)\, \ \frac{n(2\ell-1)}{2} - \delta_0\, .$$
The theorem now follows from Proposition \ref{rightiml} taking $k>>0$ 
$(k \ge k_0(n,g,\ell))$ and noting that $ n' \,=\,s'_i+ r'_i + 
t'_i$ for the subbundles.
\end{proof}

We now turn our attention to the action of $\bbc^*$.
We will take it to act on $V^*$ with weights $-\rho$, and on $\bbc^n$ with weight $\sigma$.
The action on $\alpha$ then has a fixed weight $-n\rho$; the action on each $\beta_i$ has lowest weight $-t_i\rho-r_i\rho+s_i\sigma= (s_i-n)\rho +s_i\sigma$, and highest weight $-t_i\rho +r_i\sigma +s_i\sigma = -t_i \rho+  (n-t_i)\sigma$. For (semi)-stability, the highest and lowest weight must bracket the origin, and this gives the (semi-)stability condition for the action of $\bbc^*$ (the constant $\eta$ is as in \ref{sparam}):
\begin{align}\label{c*-stab}0\quad (\leq)&\quad n\rho + \eta\sum_i ( (n-s_i)\rho -  s_i\sigma)\\
0\quad (\leq)&\quad -n\rho + \eta\sum_i (- t_i\rho +  (n-t_i)\sigma)\, .
 \nonumber\end{align}

We set 
\begin{equation}
\rho(2n(k-g+\frac{1}{2}+\ell) +2\delta_0) = \sigma(\ell n-2\delta_0).
\label{C-stab-param}\end{equation}
Substituting, we have

\begin{thm}\label{s1s}
Let the weights $(\rho,\sigma)$  be as in 
\eqref{C-stab-param}. Let $k$ be sufficiently large. 
Then $(E,\vec g)$ is $k$-$\bbc^*$ (semi)stable if and only if
\begin{align}
 \sum_{i=1}^{\ell}s_i\quad (\leq) \quad & \frac{\ell n}{2} - 
\delta_0\, , \label{1-s-bound}\\
 \sum_{i=1}^{\ell}t_i\quad (\leq) \quad & \frac{\ell n}{2} + 
\delta_0\, .\label{1-t-bound}\end{align}
\end{thm}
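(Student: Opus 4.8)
The plan is to read the stability condition directly off the Hilbert--Mumford numerical criterion applied to the single one-parameter subgroup generating the residual $\bbc^*$, normalized to determinant one so that it acts on $V\oplus\bbc^n$ with weights $(-n,\cdots,-n,p,\cdots,p)$, i.e. weight $-n$ on each of the $p$ coordinates of $V$ and weight $p$ on each of the $n$ coordinates of $\bbc^n$. First I would record the induced weights on the encoded data $(\alpha,\beta)$: the element $\alpha$ carries the single fixed weight $-n^2$, while each $\beta_i$ spans a range of weights whose extremes, using $s_i+r_i+t_i=n$, are the lowest weight $n(s_i-n)+s_ip$ and the highest weight $-nt_i+p(n-t_i)$, as computed immediately above the theorem. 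Since the criterion for a $\bbc^*$-action is that the origin lie between the minimal and the maximal total weight of $(\alpha,\beta)$ — with $\alpha$ contributing the same fixed amount $-n^2$ to each extreme — writing out the two bracketing conditions gives precisely the inequalities \eqref{c*-stab}.

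Next I would substitute the polarization $\eta=\gamma/(k-g+\mu)$ from \eqref{C-stab-param} into \eqref{c*-stab} and expand each inequality in powers of $1/k$, obtaining the two displayed $k$-(semi)stability inequalities. Passing to the limit $k\to\infty$ isolates the leading terms $0\ (\leq)\ n-\gamma\sum_i s_i$ and $0\ (\leq)\ -n+\gamma\sum_i(n-t_i)$, and inserting the value $\gamma=2n/(\ell n-2\delta_0)$ rewrites these as the asserted bounds \eqref{1-s-bound} and \eqref{1-t-bound}. For the equivalence with $k$-(semi)stability at large $k$, I would argue exactly as for the ${\rm SL}(V)$-action in Proposition \ref{h0h10}: when a leading term is strictly positive (respectively strictly negative) the $O(1/k)$ remainder cannot reverse its sign once $k$ exceeds a constant $C(n,g,\ell)$, so a strict $\infty$-inequality forces $k$-stability and a strict violation forces $k$-instability.

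The one genuinely delicate case, and where I expect the real content to lie, is equality in one of the two bounds; there the leading term vanishes and the sign of the numerical weight is governed entirely by the first-order $O(1/k)$ coefficient, so the naive ``take $k$ large'' argument fails. Here I would substitute the equality $\sum_i s_i=\ell n/2-\delta_0$ (respectively $\sum_i t_i=\ell n/2+\delta_0$) into the corresponding $k$-(semi)stability inequality and check that the surviving first-order term collapses to the single expression $0\ (\leq)\ n(\gamma\ell-2-\mu)-\delta_0$. The defining relation $\mu=\gamma\ell-2-\delta_0/n$ in \eqref{C-stab-param} is calibrated precisely so that this right-hand side vanishes identically. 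Consequently, at a boundary the non-strict inequality holds while the strict one fails, independently of $k$, matching the $\infty$-notion in which equality yields semistability but not stability; hence the $k$- and $\infty$-notions of (semi)stability coincide for all large $k$, which is the assertion. The substance of the argument is thus this single calibration of $\mu$ that annihilates the first-order term along the boundary; everything else is the routine weight bookkeeping set up in the first two steps.
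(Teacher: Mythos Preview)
Your proposal is correct and follows essentially the same route as the paper: you compute the fixed weight on $\alpha$ and the extremal weights on each $\beta_i$, write down the bracketing condition \eqref{c*-stab}, expand in $1/k$ after inserting the polarization \eqref{C-stab-param}, and then observe that the choice of $\mu$ is calibrated exactly so that the first-order term vanishes on the boundary, making the $k$- and $\infty$-notions coincide. The paper's argument is precisely this, and your identification of the $\mu$-calibration as the only nontrivial point is on the mark.
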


\subsection {The moduli space} 

\begin{thm}
There exists a projective scheme ${\cagm} \,=\, {\ca G \ca 
M}_{n,\delta_0,p_1,\cdots ,p_\ell}$ which is a coarse moduli space for 
semistable Grassmannian framed bundles
of rank $n$ and fixed degree with Grassmannian framed structures at 
$p_1,\cdots ,p_{\ell}$.
\end{thm}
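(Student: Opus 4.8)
The plan is to assemble the GIT machinery developed in the preceding subsections into a construction of the quotient, verifying that the numerical stability conditions match the intrinsic semistability of Definition~1.1. First I would recall that we have constructed the morphism $f\colon \widetilde{R}\longrightarrow P\times Q^{\ell}$, let $Z$ be the Zariski closure of $f(\widetilde R)$, and fixed the polarization $\ca O(1)_P^{\otimes\mu}\boxtimes(\boxtimes_i\ca O(1)_Q^{\otimes\nu})$ together with the stability parameters $\eta$, $\gamma$, $\mu$ as in \eqref{C-stab-param}. The group acting is $\mathrm{S}(\mathrm{GL}(p)\times\mathrm{GL}(n)^\ell)$, equivalently $\mathrm{SL}(V)\times\bbc^*$, and the two factors commute, so by the results above the quotient may be taken sequentially and the two linearizations treated independently.

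The key steps, in order, are as follows. Since $\mathrm{SL}(V)$ and $\bbc^*$ act with commuting linearizations, by Theorem~\ref{s2s} a point $(\alpha,\beta)$ arising from $(E,\vec g)$ is $k$-$\mathrm{SL}(V)$-(semi)stable precisely when the subbundle inequalities \eqref{3} and \eqref{4} hold, while by Theorem~\ref{s1s} it is $k$-$\bbc^*$-(semi)stable precisely when the summed bounds \eqref{1} and \eqref{2} hold. Hence, for $k\ge k_0(n,g,\ell)$ sufficiently large, a point of $\widetilde R$ lands in the (semi)stable locus of $Z$ for the combined action exactly when $(E,\vec g)$ is (semi)stable in the sense of Definition~1.1. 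I would then invoke Mumford's theorem \cite{MFK} to form the GIT quotient $Z^{ss}/\!/G$, which is a projective scheme, and identify $\cagm$ with the image of the (semi)stable locus of $\widetilde R$ inside it. Global generation and vanishing ($H^1(X,E)=0$, $H^0(X,E)\cong\bbc^p$) are guaranteed for $k$ large by Lemma~\ref{bounded}, ensuring that every semistable $(E,\vec g)$ of the relevant degree actually arises from a point of $R$, so the construction is exhaustive.

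Finally I would verify the coarse moduli property. This has two parts: every semistable Grassmannian framed bundle is represented by a (semi)stable point of $Z$ (surjectivity, from the above boundedness and the matching of stability conditions), and two points of the semistable locus map to the same point of the quotient if and only if the corresponding pairs $(E,\vec g)$ are isomorphic, i.e. differ by the action of $G$. The latter amounts to showing that the stabilizer/orbit structure of the $G$-action reflects isomorphism of framed bundles, using that $f$ is injective on $\widetilde R$ (as asserted when $f$ was introduced) and that closed orbits in the semistable locus correspond to polystable objects; this is the standard separatedness-of-orbits argument once the weight computations have been translated, via \cite{Gi},\cite{Bh1},\cite{HL}, into the statement that $\alpha$ encodes $E$ and $\beta_i$ encodes $g_i$. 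The coarse (rather than fine) nature of the space is forced by the presence of automorphisms, exactly as in the classical Gieseker construction.

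I expect the main obstacle to be the careful matching of the GIT-theoretic (semi)stability of $(\alpha,\beta)$ in the closure $Z$ with the intrinsic (semi)stability of $(E,\vec g)$ \emph{uniformly in $k$}, together with the proof that $k_0$ can be chosen independently of the particular bundle. The delicate point is that the numerical criteria produced by the Hilbert–Mumford analysis are the $k$-stability conditions $w_{W,k}$, whereas the intrinsic definition is phrased via the limit quantities $w_{W,\infty}$ and the secondary term $w_{W,A}$; Proposition~\ref{h0h10} bridges these, but one must ensure its constant $C$, the constant in Lemma~\ref{E'_W}, and the bound in Lemma~\ref{leftlem} can all be absorbed into a single threshold $k_0(n,g,\ell)$ valid for the whole bounded family. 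Controlling the boundary points of $Z\setminus f(\widetilde R)$ and checking they contribute no spurious semistable points is the other technical subtlety, handled by the indecomposability description of $f_R(R)$ recalled earlier.
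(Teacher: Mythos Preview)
Your overall strategy matches the paper's: use Theorems~\ref{s2s} and~\ref{s1s} to identify the intrinsic semistability of $(E,\vec g)$ with the GIT-semistability of $(\alpha,\beta)$ for the $\mathrm{SL}(V)\times\bbc^*$-action, and then take the GIT quotient. However, there is a genuine gap in the step where you ``identify $\cagm$ with the image of the (semi)stable locus of $\widetilde R$ inside $Z^{ss}/\!\!/G$.'' Taking the image does not, by itself, produce a projective scheme or a good quotient of $\widetilde R^{ss}$: you need to know that the induced map to $Z^{ss}/\!\!/G$ is proper, i.e., that the image is closed. Your last paragraph gestures at this by worrying about ``boundary points of $Z\setminus f(\widetilde R)$,'' but the proposed fix (showing they contribute no semistable points) is neither what is needed nor what is true.

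The paper's argument fills exactly this gap, and it is short: one shows that $f^{ss}\colon\widetilde R^{ss}\to Z^{ss}$ is \emph{proper} by the valuative criterion, using that $\mathrm{Gr}_n(V^*\oplus\bbc^n)\subset Q$ is proper (the argument is the same as in \cite[Proposition~3]{Bh1}). Since $f^{ss}$ is also injective, it is affine; a proper affine equivariant morphism into a scheme admitting a good quotient yields a good quotient on the source, and that quotient is projective. This properness step is the one nontrivial ingredient you are missing; once it is in place, the uniformity-in-$k$ issues you flag are already handled by the earlier lemmas and do not need a separate argument here.
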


\begin{proof}
In Section \ref{moduli}, we defined an ${\rm SL}(p)\times 
\bbc^*$-equivariant morphism 
$$f: {\widetilde R} \longrightarrow Z \subset P\times Q^{\ell}\, 
.$$
Let ${\widetilde R}^{ss}$ denote the points corresponding to semistable 
Grassmannian framed bundles, and let $(P\times Q^{\ell})^{ss}$ 
denote the 
semistable points for $ {\rm SL}(p)\times \bbc^* $-action. From Theorem 
\ref{s2s} and Theorem \ref{s1s}, it follows that 
$f$ induces a morphism 
$$f^{ss}: {\widetilde R}^{ss} \longrightarrow Z^{ss} \, .$$ 
In fact, 
$$Z \subset (P\times \text{Gr}_n(V^*\oplus \bbc^{n})^{\ell})^{ss} 
\,\subset \, (P\times Q^{\ell})^{ss}\, .$$
Using the properness of ${\rm Gr}_n(V^*\oplus \bbc^{n}) \subset 
Q$, as in \cite[Proposition 3]{Bh1}, we can prove the valuative 
criterion of 
properness for the morphism $f^{ss}$. Thus $f^{ss}$ is proper. It is 
also injective and hence affine. Therefore, the existence of the 
quotient 
of the projective scheme $Z^{ss}$ by $ {\rm SL}(p)\times \bbc^*$ implies the existence of 
the projective scheme ${\cagm}= {\widetilde R}/ ({\rm SL}(p)\times 
\bbc^*)$, the GIT-quotient of ${\widetilde R}$ by $ {\rm SL}(p)\times 
\bbc^*$.
\end{proof}

\subsection{Relation to parabolic structures}

The canonical basis $e_1,\cdots,e_n$ of $\bbc^n$ 
defines a natural flag of subspaces 
$\bbc^1\subset\bbc^2\subset\cdots\subset\bbc^n$. 
Consider a pair $(E,\vec g)$. 
The direct sum $E_{p_i}\oplus \bbc^n$ has projections $\Pi$ and $R$ to 
$E_{p_i}$ and $\bbc^n$ respectively. One has a flag 
$$\{0\}\subset R^{-1}(\bbc^1)\subset R^{-1}(\bbc^2)\subset\cdots\subset 
R^{-1}(\bbc^n)$$ in $E_{p_i}\oplus \bbc^n$. The plane $g_i$ intersects 
this flag, and one can project the intersections, using $\Pi$, to $E_{p_i}$, giving a nested sequence of subspaces
$$ 0= F_{i, -1} \subset F_{i,0}= (E_{p_i}\cap g_i)\subset 
F_{i,1}\subset F_{i,2}\subset \cdots\subset F_{i,n}= 
(E_{p_i}\cap\Pi(g_i))\subset F_{i, n+1}= E_{p_i}\, .$$
Note that for a subbundle $E'$ of $E$, one has an induced flag $F'_i$ in 
$E'_{p_i}$.

For convenience, parabolic weights will take values in 
the interval $[-1/2, 1/2]$ instead of $[0,1)$ (as we will be relating these to a moment map taking values in the interval $[-1/2, 1/2]$).
Now choose the weight $\alpha_{i,0} = 1/2$ for $F_{i,0}$, weight $\alpha_{i,n+1} 
=-1/2$ for $E_{p_i}/F_{i,n}$ and weights $\alpha_{i,j}$ for $F_{i,j}/F_{i,j-1}$, with 
$1/2 >\alpha_{i,1}\geq \alpha_{i,2}\geq\cdots\geq\alpha_{i,n}>-1/2$.
We have $$s_i = \dim(F_{i,0})= 
\text{multiplicity~of~the~weight~}
1/2\, ,$$ 
$$t_i= \dim (F_{i,n+1}/F_{i,n})\,=\,  
 \text{multiplicity~of~the~weight~} -1/2,$$
  and similarly for subbundles
$E'$.

Define as usual the parabolic degree to be
$${\rm pardeg} (E) \,=\, \delta_0(E) + \sum_{i=0}^{\ell} 
\sum_{j=0}^{n+1} \dim (F_{i,j}/F_{i, j-1})\alpha_{i,j}\, .$$
The usual definition of parabolic (semi)stability applies, in that one 
asks that for a subbundle $E'$ of rank $n'< n$, 
$$0\quad (\leq)\quad \frac{{\rm pardeg} (E)}{n} -\frac{{\rm pardeg} 
(E')}{n'}\, .$$

\begin{prop}\label{propp}
Let $(E,\vec g)$ satisfy the $\bbc^*$ (semi)stability conditions \ref{1}, \ref{2}, 
and let it be equipped with compatible flags in $E_{p_i}$ as 
above; if the result is parabolic (semi)stable for any  one  choice $\underline{\alpha}$ of 
weights $$\alpha_{i,0}\,=\, 1/2 \,>\,\alpha_{i,1}\,\geq\, 
\alpha_{i,2}\,\geq\,\cdots\,\geq\, \alpha_{i,n}\,>\,-1/2 \,=\, 
\alpha_{i,n+1}$$
as above, then it is (semi)stable as a Grassmannian framed bundle.
\end{prop}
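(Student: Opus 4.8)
The plan is to reduce parabolic (semi)stability directly to the two stability conditions of Theorem \ref{s2s}, namely the inequality $S^1(E')\ge 0$ with the refinement $S^2(E')\,(\ge)\,0$ in the boundary case, together with the $\bbc^*$-condition $\sum_i s_i\,(\le)\,\tfrac{\ell n}{2}-\delta_0$. First I would unwind the definition of parabolic degree for the flag $F_{i,\bullet}$ constructed above. Since $\dim F_{i,0}=s_i$ carries weight $1/2$, and $\dim(E_{p_i}/F_{i,n})=t_i$ carries weight $-1/2$, the extreme weights contribute $\tfrac12\sum_i s_i-\tfrac12\sum_i t_i$ to $\mathrm{pardeg}(E)$, while the interior weights $\alpha_{i,1},\dots,\alpha_{i,n}$ contribute a term that I expect to drop out of the stability comparison when $\underline\alpha$ is generic, or to be absorbed into the $S^1,S^2$ bookkeeping.

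The key computational step is to rewrite the parabolic stability inequality
$$\frac{\mathrm{pardeg}(E)}{n}-\frac{\mathrm{pardeg}(E')}{n'}\,(\ge)\,0$$
in terms of the primed quantities $s'_i,t'_i,r'_i$. For a subbundle $E'$ the induced flag $F'_{i,\bullet}$ in $E'_{p_i}$ has $\dim(E'_{p_i}\cap g_i)=s'_i$ in the top weight-$1/2$ piece and the complementary data governed by $t'_i$ and $r'_i$; using $n'=s'_i+r'_i+t'_i$ (valid for subbundles, as noted at the end of the proof of Theorem \ref{s2s}) I would express each parabolic-degree difference as a combination of $\tfrac{\delta_0}{n}-\tfrac{\delta'_0}{n'}$ and the sums $\sum_i\bigl[(\tfrac{n'-s'_i}{n'})-(\tfrac{n'-s'_i-t'_i+t_i}{n})\bigr]$ appearing in \eqref{3}. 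The goal is to show the leading part of the parabolic inequality is exactly $S^1(E')$ (up to a positive factor), so that parabolic semistability forces $S^1(E')\ge 0$; and in the boundary case $S^1(E')=0$, the next-order terms coming from the interior weights $\alpha_{i,j}$ should reproduce the sign of $S^2(E')$.

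The main obstacle I anticipate is the boundary analysis: when $S^1(E')=0$ the parabolic inequality degenerates and one must extract the finer condition $S^2(E')\,(\ge)\,0$ from the subleading contribution of the interior weights, and separately verify the scalar bound $\sum_i s_i\,(\le)\,\tfrac{\ell n}{2}-\delta_0$. The weights $\alpha_{i,j}$ enter only through the comparison of the $F_{i,0}$ (weight $1/2$) pieces, so I would argue that parabolic semistability for the \emph{specific} choice with all interior weights pushed near the endpoints forces the $s_i$-bound, and that in the equality stratum the $S^2$ term matches the quantity $[\tfrac{\delta_0}{n}-\tfrac{\delta'_0}{n'}][-\tfrac{\delta_0}{n}+\sum_i(\tfrac{n'-s'_i-t'_i+t_i}{n})-\tfrac12]$ up to a positive constant. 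Since the hypothesis only requires parabolic (semi)stability for \emph{one} choice $\underline\alpha$, care is needed to see that this single choice already implies all three conditions of Theorem \ref{s2s}; I expect this to follow because the weight-$1/2$ and weight-$(-1/2)$ multiplicities $s_i,t_i$ are forced regardless of the interior weights, and the interior weights affect only the strict sign in the boundary stratum. Once all three conditions are in hand, Theorem \ref{s2s} together with Theorem \ref{s1s} yields $k$-$\mathrm{SL}(V)\times\bbc^*$-(semi)stability of $(\alpha,\beta)$, which is Grassmannian framed (semi)stability of $(E,\vec g)$, completing the proof.
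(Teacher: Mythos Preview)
Your plan has two concrete gaps. First, you are misreading the hypothesis: the proposition \emph{assumes} $\bbc^*$ stability, which by Theorem~\ref{s1s} is exactly the pair of bounds $\sum_i s_i\,(\le)\,\tfrac{\ell n}{2}-\delta_0$ and $\sum_i t_i\,(\le)\,\tfrac{\ell n}{2}+\delta_0$. You do not need to derive the $s_i$--bound from parabolic semistability, and your proposed argument ``push the interior weights near the endpoints to force the $s_i$--bound'' is unnecessary work aimed at something already given.

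Second, and more seriously, your plan to identify the parabolic slope difference with $S^1(E')$ up to a positive factor, and then extract $S^2(E')$ from ``subleading contributions of the interior weights,'' is not how the argument runs and is unlikely to succeed as stated. The parabolic slope difference genuinely depends on the interior weights $\alpha_{i,1},\dots,\alpha_{i,n}$; it is not a multiple of $S^1(E')$ plus a lower-order term in any useful sense. The paper's key observation is that this dependence is \emph{affine} in the $\alpha_{i,j}$, so to show that the parabolic quantity is bounded above by $S^1(E')$ for the given $\underline\alpha$ it suffices to check the inequality at each vertex of the weight simplex, i.e., at the degenerate choices $\alpha_{i,1}=\cdots=\alpha_{i,k_i}=1/2$, $\alpha_{i,k_i+1}=\cdots=\alpha_{i,n}=-1/2$. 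At such a vertex the flag collapses to a single step $F_{i,k_i}$, and a direct computation (using $n'=s'_i+r'_i+t'_i$ and the obvious inequality $\dim(F_{i,n}/F_{i,k_i})\ge\dim(F'_{i,n}/F'_{i,k_i})$) shows the parabolic slope difference is at most $S^1(E')$. Convexity then gives the same bound for your single choice $\underline\alpha$. Thus parabolic (semi)stability yields $S^1(E')\,(\ge)\,0$ directly; in the stable case this is strict, so condition~\eqref{4} is vacuous and no separate $S^2$ analysis is needed. What you are missing is this convexity/vertex reduction, which replaces your attempted perturbative matching of $S^1$ and $S^2$.
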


\begin{proof}
One has, for any $E'$, the condition for $\underline{\alpha}$-parabolic (semi)stability,
$$0\,(\leq)\,\frac{\delta(E)}{n} -\frac{\delta(E')}{n'} + \sum_{i=1}^\ell 
\Big[\frac{1}{n}\sum_{j=0}^{n+1} \dim (F_{i,j}/F_{i, j-1})\alpha_{i,j}-
\frac{1}{n'}\sum_{j=0}^{n+1} \dim (F'_{i,j}/F'_{i, j-1})\alpha_{i,j} 
\Big]\, .$$

The previous inequality becomes:
\begin{align} 0\,(\leq)\, \frac{\delta(E)}{n} -\frac{\delta(E')}{n'} +& 
\sum_{i=1}^\ell \Big[\frac{1}{n}\sum_{j=1}^{n} \dim (F_{i,j}/F_{i, j-1})\alpha_{i,j} 
- \frac{1}{n'}\sum_{j=1}^{n} \dim (F'_{i,j}/F'_{i, 
j-1})\alpha_{i,j}\nonumber\\ + & \frac{1}{2n}(s_i-t_i) - 
\frac{1}{2n'}(s'_i-t'_i)\nonumber\Big]\, .\end{align}

We would like this to imply Grassmann-framed semistability, for any 
choice of $\alpha_{i,j}$ within our simplex of weights. The inequality is an affine one 
in the $\alpha_{i,j}$, and so it suffices to check this for the vertices of 
the simplex; this corresponds to considering choices of weights of the 
form $$\alpha_{i,0}\,=\,\cdots\,=\,\alpha_{i,k_i} \,=\, 1/2\, ,
\alpha_{i,k_i+1}\,=\,\cdots\,=\,\alpha_{i,n+1}\,=\, -1/2\, .$$
For the vector bundles $E$ and $E'$, let $m_{k_i}:= \dim 
(F_{i,k_i}/F_{i,0})$ and
$m'_{k_i}:= \dim (F'_{i,k_i}/F'_{i,0})$. The inequality becomes:
$$0\,(\leq)\, \frac{\delta(E)}{n} -\frac{\delta(E')}{n'} + 
\sum_{i=1}^\ell 
\Big[\frac{1}{2n}(m_{k_i} - (r_i-m_{k_i}) + s_i-t_i) - 
\frac{1}{2n'}(m'_{k_i} - (r'_i-m'_{k_i}) +s'_i-t'_i)\Big]\, .$$
The right hand side is equal to 
$$
\frac{\delta(E)}{n} - \frac{\delta(E')}{n'} +
\sum_{i=1}^\ell \Big[(\frac{n' -s'_i}{n'}) - ( 
\frac{n'-s'_i-t'_i+t_i}{n})\Big]
$$
$$
+\sum_{i=1}^\ell \Big[\frac{1}{2n}(2m_{k_i} -r_i + s_i-t_i +2r'_i 
+2t_i) - \frac{1}{2n'}(2m'_{k_i} - r'_i 
+s'_i-t'_i+2n'-2s'_i)\Big]\, .
$$
One then needs
\begin{equation}\label{s3s}
\sum_{i=1}^\ell \Big[\frac{1}{2n}(2m_{k_i} -r_i + s_i-t_i +2r'_i 
+2t_i) - \frac{1}{2n'}(2m'_{k_i} - r'_i +s'_i-t'_i+2n'-2s'_i)\Big]\leq 
0\, .
\end{equation}

To prove \eqref{s3s}, note that
\begin{align}& \sum_{i=1}^\ell \Big[\frac{1}{2n}(2m_{k_i}-r_i + s_i-t_i +2r'_i +2t_i) - \frac{1}{2n'}(2m'_{k_i}- r'_i +s'_i-t'_i+2n'-2s'_i)\Big]\nonumber\\ 
=\, & \sum_{i=1}^\ell \Big[\frac{1}{2n}(2m_{k_i}- 2r_i + n +2r'_i) - 
\frac{1}{2n'}(2m'_{k_i} +n')\Big]\nonumber\\
=\, &\sum_{i=1}^\ell \Big[ \frac{1}{2n}(2m_{k_i} - 2r_i+2r'_i) - 
\frac{1}{2n'}(2m'_{k_i} )\Big]\nonumber\\
\leq\, &\sum_{i=1}^\ell \Big[\frac{1}{n}(m_{k_i} - r_i-m'_{k_i} 
+r'_i)\Big] \nonumber \end{align}
which is indeed less or equal to zero, since
$(r_i-m_{k_i})\, =\,\dim (F_{i,n}/F_{i,k_i})$ and $(r'_i-m'_{k_i})\,=\, 
\dim (F'_{i,n}/F'_{i,k_i})$.
\end{proof}

When $s_i, t_i =0$, the flag $F_{i,j}$ constructed above is  a full flag, with $j$ corresponding to the  dimension. On the other hand, if $s_i$, $t_i$ are arbitrary, then one has the sequence of spaces $F_{i,j}$ interpolating between $E_{p_i}\cap g_i$, of dimension $s_i$, and  
$\Pi(g_i)$, of codimension $t_i$, both subspaces of $E_{p_i}$.  The dimensions of the $F_{i,j}$ do not necessarily follow a regular pattern, varying according to how $\bbc^n\cap g_i$ intersects the standard flag $\bbc^j$ (generated by the first $j$ vectors of the standard basis) in $\bbc^n$. We now suppose that this intersection is maximal (this will correspond to taking a closed orbit in the quotient construction to come):
\begin{equation}\label{closed}\bbc^n\cap g_i= \bbc^{t_i}.\end{equation}
 This fixes the dimensions of $R_i^{-1}(\bbc^{t_i+s}) $ to $t_i+s+s_i$  and that of $F_{i, t_i+s}$ to $s+s_i$ for $s\leq n-s_i-t_i$,. We thus get a sequence of spaces
$$ 0  \subset F_{i,0}= (E_{p_i}\cap g_i)=\cdots =
F_{i,t_i}\subsetneq F_{i,t_i+1}\subsetneq \cdots  \subsetneq F_{i,n-s_i} =
\cdots = F_{i,n}= 
\Pi(g_i)\subset  E_{p_i}\, .$$
with a sequence of dimensions
$$0\leq s_i=\cdots =s_i < s_i+1 < \cdots < n-t_i= \cdots =n-t_i\leq n$$

Now consider the sequence of weights
$$\alpha_{i,0} =1/2 >\alpha_{i,1}\geq \alpha_{i,2}\geq\cdots\geq\alpha_{i,n}>\alpha_{i,n+1} =-1/2.$$
 We  note that the weights $\alpha_{i,1},\cdots ,\alpha_{i,t_i}$ are irrelevant
for $t_i>0$, as the dimensions of the corresponding vector spaces in the flag are
constant in this range of indices; likewise for the weights
$\alpha_{i,n-s_i+1},\cdots ,\alpha_{i,n}$ if $s_i>0$. The weights $\alpha_{i,t_i+1},\cdots ,\alpha_{i,n-s_i}$ also allow repetitions. Set
$\alpha_{i,t_i+1}=\cdots =\alpha_{i,t_i+j_i^1}< \alpha_{i,t_i+j_i^1+1}=\cdots =\alpha_{i,t_i+j_i^2}<\cdots <\alpha_{i,t_i+ j_i^{k_i-1} +1}=\cdots =\alpha_{i,j_i^{k_i}}$ 
with $j_i^{k_i}= n-s_i$; there are thus $k_i$ different weights  $\alpha_{i,j}$ in the sequence, as well as the weights $\alpha_{i,0}=1/2, \alpha_{i,n+1}= -1/2$.   
Now set 
 \begin{alignat*}{2}\label{parstruct}&\hat\alpha_{i,0}= \alpha_{i,0} =1/2 , &&\hat F_{i,0} = F_{i,t_i} \\
&\hat\alpha_{i,1}=  \alpha_{i,t_i+j_i^1}, &&  \hat F_{i,1}=F_{i, t_i+j_i^1}  \\
&\ldots &&\ldots \\
 & \hat\alpha_{i,k_i}=  \alpha_{i,t_i+j_i^{k_i}},&&\hat F_{i,k_i}=F_{i, t_i+j_i^{k_i}}  \\
  &  \hat\alpha_{i,k_i+1}=\alpha_{i,n+1} =-1/2,\quad && \hat F_{i,k_i+1} = E_{p_i} \end{alignat*} 
This gives a nested sequence of subspaces of dimensions $s_i, s_i+j_i^1, s_i+j_i^2,\cdots, n$, with each successive quotient being non zero, except  for $\hat F_{i,k_i+1}/ \hat F_{i,k_i}$ if $t_i= 0$. Associated to each of them is a different weight. One has the immediate lemma: 

\begin{lem} The parabolic bundle $(E, F_i)$ with weights $\alpha_{i,j}$ is (semi-) stable if and only if the  parabolic bundle $(E,\hat F_i)$ with weights $\hat\alpha_{i,j}$ is.\end{lem}
Let $P_i$ be the parabolic subgroup fixing the standard flag $\bbc^{t_i}\subset \bbc^{t_i+j_i^1}\subset\cdots \subset \bbc^{t_i+ j_i^{k_i-1} }\subset\bbc^{t_i+ n-s_i }$.  Given a semi-stable parabolic vector bundle $E$, there is no difficulty in  constructing a pair $(E,\vec g)$ to which it corresponds. The set of pairs $(E,\vec g)$  corresponding to the same parabolic structure is an orbit of the group $P_1\times\cdots \times P_\ell$ acting on $(\bbc^n)^\ell$; this, combined 
with Proposition \ref{propp}, tells us that we can obtain the parabolic 
moduli space as a quotient of the Grassmannian-framed moduli space:

\begin{cor}
Let weights $\hat\alpha_{i,j}$ be given as above, from weights $\alpha_{i,j}$ satisfying $\sum_{i,j}\alpha_{i,j} = -\delta_0$, and let $P_1\times\cdots \times P_\ell$ be the parabolic subgroup of $Gl(n,\bbc)^\ell$. Let $V_{\hat\alpha}$ be the subvariety of elements of $\cagm$ satisfying the constraint \ref{closed} that map to  semi-stable parabolic bundles for the weights $\hat\alpha_{i,j}$. Then the moduli space ${\ca PM}_{\underline\alpha}$ of parabolic bundles with weights $\hat\alpha_{i,j}$ is the quotient of $V_{\hat\alpha}$ by  $P_1\times\cdots \times P_\ell$.  \end{cor}

\begin{proof} We note that the quotient in essence is modelled on quotient of the Grassmannian $Gr_n(\bbc^n_1\oplus \bbc^n_2)$ by  a parabolic $P$, which yields a particular flag manifold  on $\bbc^n$.  Indeed, if one fixes $t, s$, and a space $S$ of dimension $s$ in $\bbc_1$, the same construction as above gives us from the subvariety of planes $V_{t,s, S}$ of planes $\Pi$ in $Gr_n(\bbc^n_1\oplus \bbc^n_2)$ such that $\Pi\cap\bbc^n_2 = \bbc^t$, $\Pi\cap\bbc^n_1 = S$ a family of flags
$S\subset S_{j^1}\subset S_{j^2}\subset\cdots \subset S_{j^k}\subset \bbc^n_1$. The quotient is a homogeneous one, so stability is not really an issue. The same holds for the family in our moduli construction.  \end{proof}
We will return to the description of the parabolic moduli as quotients in sections three and four, giving a more complete description.

\section{Extended moduli spaces, and their Grassmannian 
version}\label{sec3}

\subsection{Extended moduli spaces}

We now turn to the description of the extended moduli spaces of Jeffrey, 
as explained in \cite{Je}, and then describe their Grassmannian 
compactifications. Let
$$
X^*\, :=\, X \setminus \{p_1\, ,\cdots\, 
p_\ell\}
$$
be the punctured Riemann surface. Parametrize disjoint 
neighborhoods of the punctures 
as semi-infinite cylinders, with complex coordinate 
$r+\sqrt{-1}\theta$, $r\,\in\, 
[0,\infty)$, $\theta\in \bbr/2\pi\bbz$. Choose points ${\widetilde p}_i$ 
given by $(r,\theta) =(1,0)$, thought of as close to their respective 
punctures. We consider the space ${ EM}_n$ of equivalence classes of
flat unitary connections on $X^*$, which are of the form 
$\sqrt{-1}\delta d\theta$ on the semi-infinite cylinders, where
$\sqrt{-1}\delta$ is some constant skew hermitian matrix. Here the 
equivalence 
is given by gauge transformations which are the identity on the 
semi-infinite cylinders. (This is not quite Jeffrey's construction, but 
suffices for our purposes.)

One can choose paths $c_i, i = 2,\cdots ,\ell$, from ${\widetilde p}_1$ 
to 
${\widetilde p}_i$, loops $a_j, b_j, j=1,\cdots ,g$, based at 
${\widetilde p}_1$, loops $d_i$ based at the $p_i$ around the punctures $p_i$, such 
that the 
fundamental group of $X^*$ is generated by $a_1, b_1,\cdots , 
a_g,b_g, d_1, 
c_2d_2c_2^{-1},\cdots , c_\ell d_\ell c_\ell^{-1}$ subject to the 
relation 
\begin{equation}(\prod_{j=1}^g[a_j, b_j]) 
d_1c_2d_2c_2^{-1}\cdot\ldots\cdot 
c_\ell d_\ell c_\ell^{-1}= 1.\end{equation}
One can integrate the connections. We note that there are implicit 
trivializations at each of the ${\widetilde p}_i$; these trivializations 
extend naturally to $r\,\in\, [0\, ,\infty), \theta \,\in\, 
(-\pi\, , \pi)$. In 
particular, the integration of the connections along each of the 
paths 
$a, b, c$ is well defined; along the paths $d_i$, the integral is simply 
$\exp(2\pi\sqrt{-1}\delta_i)$. Our space $EM_n$ is then the space
of elements $A_j\, ,B_j$, $j=1,\cdots ,g$, $C_i$, $i=2,\cdots ,\ell$, of 
$U(n)$ and 
$\sqrt{-1}\delta_i$, $i =1,\cdots ,\ell$, of $u(n)$ satisfying 
\begin{equation}(\prod_{j=1}^g[A_j, B_j]) \exp(2\pi\sqrt{-1}\delta_1)C_2 
\exp(2\pi\sqrt{-1}\delta_2)C_2^{-1}\cdot\ldots\cdot C_\ell \exp(2\pi 
\sqrt{-1}\delta_\ell) C_\ell^{-1}= 1\, .
\label{defining-constraint}\end{equation}

An element of ${EM}_n$ can be represented either as a triple 
$(\widetilde{E},\nabla, f)$ consisting of a unitary bundle 
$\widetilde E$, a unitary flat connection $\nabla$ of the form 
$\sqrt{-1}\delta_i d\theta$ near the punctures, and a 
unitary framing $f= (f_1,\cdots, f_\ell)$ near the punctures,
alternately, 
as a tuple $(A_j,B_j, C_i, \delta_i)$ representing the 
holonomies. Under the first representation, the infinitesimal deformations of the moduli space are given by covariant constant $u(n)$-valued one-forms $\sigma$ which are locally constant near the punctures, and of the form $a_i d\theta$. One then has, for a pair $\sigma_1, \sigma_2$ of such forms, a closed skew form
$$\Omega(\sigma_1,\sigma_2) \,=\, 
-\int_{X^*}{\rm tr}(\sigma_1\wedge 
\sigma_2)\, .$$

Jeffrey shows that the variety ${ EM}_n$ is smooth, and that the 
form $\Omega$ is non-degenerate, for the $\delta_i$ in a 
neighborhood of the origin, indeed in the neighborhood of any 
central element. For smoothness, one can isolate any one of the 
terms $\exp(2\pi \sqrt{-1}\delta_i)$ in the 
defining equation \eqref{defining-constraint}, and so the variety 
has the form of a graph and is smooth, as long as one is at a 
point at which the exponential map is locally bijective. This 
holds for the $\delta_i$ 
whose eigenvalues lie in $(-1/2, 1/2)$. Another locus at which the 
variety is smooth is that of irreducible representations. On the other hand, the 
form can degenerate when the stabilizer of one of the $\exp(2\pi 
\sqrt{-1}\delta_i)$ differs from (and so is larger than) that of 
$\delta_i$. Indeed let $\gh_{1,i}$ be the stabilizer in $\gu(n)$ 
of $\exp(2\pi \sqrt{-1}\delta_i)$, and $\gh_{2,i}$ be the 
stabilizer of $\sqrt{-1}\delta_i$;   Let $\gs_i= \gh_{1,i}\cap 
\gh_{2,i}^\perp $; then the null space for $\Omega$ is tangent to 
the 
distribution spanned by the action of $\oplus_i\gs_i$. To see this, 
we recall from Jeffrey (\cite{Je}) the various deformation 
spaces in play, and the diagram that they fit into, given in 
(3.6) of the proof of (3.1) in \cite{Je}. The main space is  
the tangent space of deformations of the relevant flat 
connections with correct asymptotic form near the punctures, 
modulo compactly supported gauge transformations; this gets 
expressed as a cohomology group $H^{1,\gg}(X^*)\,=:\,
H^{1,\gg}$. Inside this space 
there is a space $H^1_c(X^*)\,=:\, H^1_c$ of compactly supported 
deformations; the 
quotient $H^{1,\gg}/H^1_c$ maps to $\gu(n)^{\ell}$, by taking 
values 
at the boundary.  On the union $S$ of the boundary circles of 
$X^*$, one has the space $H^0(S)$ of covariant constant sections 
of the adjoint bundle, as well as the dual space  $H^1(S)$; the 
space $H^0(S)$ contains the space $H^0(X^*)$ of covariant 
constant sections over the whole punctured curve. One also has 
the space of  deformations $H^1$ of all flat connections, modulo 
all gauge transformations, on $X^*$. One has the diagram (3.6) of 
\cite{Je}, fitting all of these spaces together:
\begin{equation}
\begin{matrix}
&&0&\longrightarrow & H^1_c & \longrightarrow & H^{1,\gg} 
&\stackrel{b}{\longrightarrow}& \gu(n)^{\ell}\\
&&\Big\downarrow&& \Vert && ~\Big\downarrow \tau && 
~\Big\downarrow \sigma\\
H^0(X^*)&\stackrel{\beta}{\longrightarrow} 
&H^0(S)&\stackrel{\gamma}{\longrightarrow}& 
H^1_c&\longrightarrow &H^1 & \stackrel{\gamma^*}{\longrightarrow}
&H^1(S)&\stackrel{\beta^*}{\longrightarrow}~H^2_c
\end{matrix}
\end{equation}
Note that the arrows on the bottom are indeed duals, using 
Poincar\'e duality. Let $\psi$ be a smooth function that is one 
on the ends of the curve, and is zero on its interior; one can 
find an inverse to the map $b$ over $\bigoplus_i \gh_{2,i}^\perp$  by associating to an element $h$ in $\gh_{2,i}^\perp$ the 
deformation $f(h) = d_A( \psi (ad(\delta_i)^{-1} h))$, where 
$d_A$ is the covariant derivative of the connection. These
elements are supported over the ends; they are coboundaries in 
$H^1$, and so are mapped to zero by $\tau$. On the other hand, 
the elements of $\bigoplus_i \gh_{2,i}$ map to non-zero elements 
of $H^1(S)$. Finally, we note that the space $H^0(S)$ is spanned 
by elements $Ad(\exp(\theta\delta_i)) (s),\ s\in \gh_{1,i}$; the 
elements corresponding to $s\in  \gh_{2,i}$ are constant. These 
elements map to $-d\psi Ad(\exp(\theta\delta_i)) (s)$ in $H^1_c$.

Now consider the form $\Omega(a, \cdot)$ on $H^{1,\gg} $; if 
$\tau(a)$ is non-zero, then the form is non-degenerate, as 
$H^1_c$ is Poincar\'e dual to $H^1$. The kernel of $\tau$ is 
spanned, in turn,  by $f(\bigoplus_i \gh_{2,i}^\perp)$ and by 
$\gamma (H^0(S))$. From the explicit form of elements in 
$f(\bigoplus_i 
\gh_{2,i}^\perp)$, one can check that the form restricted to this 
subspace is non-degenerate. There remains the elements of 
$H^0(S)$. Consider those corresponding to elements of 
$\gh_{2,i}$; they are constants $s$, and map to $-sd\psi$ in 
$H^1_c$ under $\gamma$. On the other hand, the map $\sigma$ 
gives us from elements $s$ in $\gh_{2,i}$ elements $s d\theta$ in 
$H^1(S)$; if 
these elements come from elements of $H^1$, one then has a non 
zero pairing, as ${\rm tr}(s^2) $ is non-zero. For this to be the 
case, $\beta^*(s d\theta)$ must vanish. Pairing with elements 
$\alpha$ of $(H^2_c)^* \,=\, H^0(X^*)$, this tells us that $s$ 
should be orthogonal to the image of $H^0(X^*)$ in $H^0(S)$. This 
tells us that the pairing is non degenerate on the image
$\gamma(\gh_{2,i})\subset H^1_c$. 

There remains the subspaces  of $H^0(S)$ corresponding to $\gs_i$; and indeed the form can degenerate on these. 

Jeffrey also shows that the parabolic moduli spaces, in their symplectic 
description, can be obtained as symplectic quotients of ${EM}_n$, for 
weights in the open interval $(-1/2, 1/2)$ (our Grassmannian moduli 
space will allow us to extend this to the closed interval). For the 
moment, we note that associated to each puncture $p_i$, there is a 
natural action of $U(n)$ on the trivialization at ${\widetilde p}_i$. In 
terms 
of the parametrization above, if $(g_1,\cdots ,g_\ell)\in U(n)^\ell$, 
this action is given by 
$$(A_j,B_j, C_i, \delta_i)\,:=\, (\{A_j\},\{B_j\}, \{C_i\}, 
\{\delta_i\})\,\longmapsto\, 
(g_1A_jg_1^{-1},g_1B_jg_1^{-1}, 
g_1C_ig_i^{-1},g_i\delta_ig_i^{-1})\, .$$
The action is Hamiltonian, with moment map $$\nu_{U(n)^\ell}(A_j,B_j, 
C_i, \delta_i) \,= \, \sqrt{-1}(\delta_1, \cdots,\delta_\ell)\, .
$$
The parabolic moduli 
for $\delta_i$ such that ${\rm Stab}(\exp(2\pi \sqrt{-1}\delta_i)) \,=\, 
{\rm Stab}(\sqrt{-1}\delta_i)$ is then the symplectic quotient 
$\nu_{U(n)^\ell}^{-1}(\prod_j{\ca O}_{\sqrt{-1}\delta_j})/U(n)^\ell$.

We note that as a consequence of the defining constraint 
\eqref{defining-constraint}, we have
$$\sum_i \text{tr}(\delta_i) \,\in\, \bbz\, .$$
The actual value of the sum is minus the degree $\delta_0$ of the 
eventual holomorphic bundle that we will build. The integer values of 
$\delta_0$ split the moduli space into components $EM_{n,\delta_0}$. 

\subsection{Grassmannian extended moduli} 

We have for the action of $U(n)^\ell$ on $EM_n$ at the punctures the moment map:
$$\nu_{U(n)^\ell}(A_j,B_j, C_i, \delta_i) \,=\,
\sqrt{-1}(\delta_1,\cdots,\delta_\ell)$$
(see \cite{Je}).

Now consider the Grassmannian ${\rm Gr}_m(m+n)$ of $m$-planes in 
$\bbc^{m+n}$. The K\"ahler form on this 
manifold can be given either by restricting the canonical Fubini-Study 
form on projective space, or as the Kostant-Kirillov form on the 
Grassmannian thought of as the coadjoint orbit of $\lambda = 
\frac{\sqrt{-1}}{2} 
(-1,\cdots,-1,1,\cdots,1)$, under the action of $U(m+n)$. The coadjoint 
orbit is then (writing $\bbc^{m+n}$ as $\bbc^m\oplus \bbc^{n}$):
$$\Biggl\{ \begin{pmatrix}a&b\\ c&d\end{pmatrix}\cdot 
\frac{\sqrt{-1}}{2}\begin{pmatrix}-\bbi&0\\0&\bbi\end{pmatrix}\cdot 
\begin{pmatrix}a^*&c^*\\ b^*&d^*\end{pmatrix}
$$
$$
= \, \frac{\sqrt{-1}}{2} 
\begin{pmatrix}-aa^*+bb^*&-ac^*+bd^*\\-ca^*+db^*&-cc^*+dd^*\end{pmatrix}\Biggr| \begin{pmatrix}a&b\\ c&d\end{pmatrix}\in U(m+n)\Biggr\}$$
The $U(m+n)$ moment map for this is simply the identity map, i.e., the 
inclusion, and so the moment map for the action 
of $U(m)\times\{\bbi\}$ is $\frac{\sqrt{-1}}{2}(-aa^*+bb^*)$, and the moment map 
for the action of $\bbi\times U(n)$ is 
$\frac{\sqrt{-1}}{2}(-cc^*+dd^*)$. 

\begin{lem}\label{Grassmann-moment} 1) Representing the generic element 
of the Grassmannian ${\rm Gr}_m(m+n)$ of $m$-planes in $\bbc^{m+n}$ as 
the 
graph of a linear transformation $\gamma:\bbc^m\longrightarrow \bbc^n$, 
the moment map for $U(m)\times\{\bbi\}$ acting on ${\rm Gr}_n(m+n)$ 
(the ``right action") in this parametrization is then 
$$\mu_1(\gamma) = -\frac{\sqrt{-1}}{2}aa^* + \frac{\sqrt{-1}}{2}bb^*
\,=\,\frac{\sqrt{-1}}{2}(-\bbi 
+ \gamma^*\gamma)(\bbi+\gamma^*\gamma)^{-1}\, ;$$ that for $\bbi\times 
U(n)$ (the ``left action")
 is $$\mu_2(\gamma) \,=\, -\frac{\sqrt{-1}}{2}cc^* 
+\frac{\sqrt{-1}}{2}dd^*\,=\, 
\frac{\sqrt{-1}}{2}(\bbi-\gamma\gamma^*)(\bbi+\gamma \gamma^*)^{-1}.$$
 2) Representing an element of the Grassmannian as the elements annihilated by the $n$ orthonormal rows of a matrix $(b^*, d^*)$, the moment map for the action of $U(m)\times\{\bbi\}$ is
 $$\mu_1(b^*, d^*) \,=\, \frac{\sqrt{-1}}{2}(-\bbi+2 bb^*)\, , $$
 and that of $\bbi\times U(n)$ is
 $$\mu_2(b^*, d^*) \,=\, \frac{\sqrt{-1}}{2}(-\bbi+2 dd^*)\, . $$
 \end{lem}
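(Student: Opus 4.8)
The plan is to realise every point of ${\rm Gr}_m(m+n)$ as $g\lambda g^{*}$ for some $g=\begin{pmatrix}a&b\\ c&d\end{pmatrix}\in U(m+n)$, where $\lambda=\frac{\sqrt{-1}}{2}{\rm diag}(-\bbi_m,\bbi_n)$, so that the orbit point is exactly the matrix displayed just before the lemma. Since the $U(m+n)$-moment map is the inclusion, the moment maps $\mu_1,\mu_2$ for $U(m)\times\{\bbi\}$ and $\{\bbi\}\times U(n)$ are the $\gu(m)^{*}$- and $\gu(n)^{*}$-components, i.e.\ the top-left block $\frac{\sqrt{-1}}{2}(-aa^{*}+bb^{*})$ and the bottom-right block $\frac{\sqrt{-1}}{2}(-cc^{*}+dd^{*})$, as already recorded in the text. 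The whole content of the lemma is then to rewrite these two blocks in terms of the data parametrising the plane, and the only input I will use is that $g$ is unitary, in block form
$$aa^{*}+bb^{*}=\bbi_m,\qquad cc^{*}+dd^{*}=\bbi_n,$$
coming from $gg^{*}=\bbi$.

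For part (1) the plane is the graph of $\gamma$, spanned by the columns of $\begin{pmatrix}\bbi\\ \gamma\end{pmatrix}$, whose Gram matrix is $\bbi+\gamma^{*}\gamma$. Since the columns of $\begin{pmatrix}a\\ c\end{pmatrix}$ must be an \emph{orthonormal} basis of this plane, I would take
$$\begin{pmatrix}a\\ c\end{pmatrix}=\begin{pmatrix}\bbi\\ \gamma\end{pmatrix}(\bbi+\gamma^{*}\gamma)^{-1/2},$$
so $aa^{*}=(\bbi+\gamma^{*}\gamma)^{-1}$ and $cc^{*}=\gamma(\bbi+\gamma^{*}\gamma)^{-1}\gamma^{*}$. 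Feeding these through unitarity gives $bb^{*}=\bbi-aa^{*}$ and $dd^{*}=\bbi-cc^{*}$, whence $\mu_1=\frac{\sqrt{-1}}{2}(\bbi-2aa^{*})$ and $\mu_2=\frac{\sqrt{-1}}{2}(\bbi-2cc^{*})$. A short manipulation in the spectral variable $\gamma^{*}\gamma$ turns $\bbi-2(\bbi+\gamma^{*}\gamma)^{-1}$ into $(-\bbi+\gamma^{*}\gamma)(\bbi+\gamma^{*}\gamma)^{-1}$, giving $\mu_1$. For $\mu_2$ I would first apply the intertwining identity $\gamma(\bbi+\gamma^{*}\gamma)^{-1}=(\bbi+\gamma\gamma^{*})^{-1}\gamma$ (immediate from $(\bbi+\gamma\gamma^{*})\gamma=\gamma(\bbi+\gamma^{*}\gamma)$) to rewrite $cc^{*}=(\bbi+\gamma\gamma^{*})^{-1}\gamma\gamma^{*}$, and then the same spectral step in $\gamma\gamma^{*}$ yields $(\bbi-\gamma\gamma^{*})(\bbi+\gamma\gamma^{*})^{-1}$.

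For part (2) the plane is $\ker(b^{*},d^{*})$, i.e.\ its orthogonal complement is the column span of $\begin{pmatrix}b\\ d\end{pmatrix}$ with orthonormal columns; here the blocks $b,d$ are given outright and no basis of the plane itself is chosen. Both moment maps then drop out of unitarity alone: substituting $aa^{*}=\bbi-bb^{*}$ into the top-left block gives $\mu_1=\frac{\sqrt{-1}}{2}(-\bbi+2bb^{*})$, and $cc^{*}=\bbi-dd^{*}$ gives $\mu_2=\frac{\sqrt{-1}}{2}(-\bbi+2dd^{*})$, as claimed.

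The computations are routine; the one point requiring attention is \emph{well-definedness}, i.e.\ that the displayed expressions depend only on the plane and not on the representative $g$. The residual freedom is right multiplication of $\begin{pmatrix}a\\ c\end{pmatrix}$ by $U(m)$ and of $\begin{pmatrix}b\\ d\end{pmatrix}$ by $U(n)$ (changing the orthonormal bases of the plane and its complement), and each of $aa^{*},bb^{*},cc^{*},dd^{*}$ is manifestly invariant under this, so the block expressions are genuine functions on the Grassmannian. I expect the only mild obstacle to be the bookkeeping with the functional calculus for $(\bbi+\gamma^{*}\gamma)^{-1/2}$ and the intertwining identity in part (1); everything else is forced by the unitarity relations.
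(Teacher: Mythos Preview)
Your proof is correct and follows essentially the same route as the paper: orthonormalise the columns of $\begin{pmatrix}\bbi\\ \gamma\end{pmatrix}$ to identify $a,c$, then read off the block moment maps and use the unitarity relations $aa^{*}+bb^{*}=\bbi$, $cc^{*}+dd^{*}=\bbi$. The only cosmetic difference is that the paper also writes down $\begin{pmatrix}b\\ d\end{pmatrix}=\begin{pmatrix}-\gamma^{*}\\ \bbi\end{pmatrix}(\bbi+\gamma\gamma^{*})^{-1/2}$ explicitly before invoking the formulae, whereas you bypass this by working only with $aa^{*}$ and $cc^{*}$; your added remarks on the intertwining identity and well-definedness are more explicit than what the paper records, but the underlying argument is the same.
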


\begin{proof}
If we represent the generic element of the Grassmannian 
${\rm Gr}_m(m+n)$ as the graph of $\gamma:\bbc^m\longrightarrow \bbc^n$, 
the 
$m$-plane is spanned by the columns of 
$$ \begin{pmatrix}\bbi\\ \gamma\end{pmatrix}$$ or, equivalently, by the 
(mutually orthogonal) columns of 
$$ \begin{pmatrix}\bbi\\ 
\gamma\end{pmatrix}(\bbi+\gamma^*\gamma)^{-1/2}\,=\, \begin{pmatrix}a\\ 
c\end{pmatrix}\, .$$ 
The orthogonal complement of this is spanned by 
$$ \begin{pmatrix}-\gamma^*\\ \bbi 
\end{pmatrix}(\bbi+\gamma\gamma^*)^{-1/2}\,=\, \begin{pmatrix}b\\ 
d\end{pmatrix}\, .$$ 
We then apply the formulae. In the same vein, the correspondence 
between the coadjoint orbit and the Grassmannian is by taking our 
$m$-plane to be the $-\sqrt{-1}/2$ eigenspace of the matrices in the 
orbit under the action of $U(n+m)$; parametrizing as above, the 
eigenspace is annihilated by the rows of $(b^*, d^*)$. On the other hand, as the matrix is unitary, we have $aa^* + bb^* = \bbi, cc^*+dd^* = \bbi$, and so 
\begin{align} \mu_1(b^*, d^*) =& \frac{\sqrt{-1}}{2}(-aa^*+bb^*) = 
\frac{\sqrt{-1}}{2}(-\bbi+2 bb^*),\nonumber \\
 \mu_2(b^*, d^*) =& \frac{\sqrt{-1}}{2}(-cc^*+dd^*) = 
\frac{\sqrt{-1}}{2}(-\bbi+2 dd^*)\, .\nonumber\end{align}
We note that the image of both moment maps is the set of skew hermitian matrices with eigenvalues in the interval 
$\sqrt{-1}\cdot [-1/2,1/2]$ (We note that we can scale the symplectic form, by changing the coadjoint orbit; the scale here is chosen so that the image of the moment map is the $U(n)$ orbit of the fundamental alcove for $U(n)$.) 

We now want to replace framings by their graphs. We do this by acting
 by $U(n)^\ell$ diagonally on $EM_{n,\delta_0}\times 
{\rm Gr}_n(2n)^\ell$, 
with the right action (by $(U(n)\times \{\bbi\})^\ell$) on the Grassmannians, and reducing at zero. This means that one considers elements
$(A_i,B_i, C_i, \delta_i)(b^*_i, d^*_i)$ lying in the zero locus $M^{-1}(0)$ of the moment map, that is, satisfying 
\begin{equation}\delta_i \,= \,\frac{ {1}}{2}(\bbi-2 b_ib_i^*)\,
,\label{mo}\end{equation} and quotients by the action of $U(n)^\ell$.
\end{proof}

We note that the diagonal $S^1$ of $(U(n)\times \{\bbi\})^\ell$ acts trivially on $EM_{n,\delta_0}$, but not on ${\rm Gr}_n(2n)^\ell$; also, similarly to the holomorphic case, the quotient by $(U(n)\times \{\bbi\})^\ell$ can also be thought of as a quotient by $(SU(n)\times \{\bbi\})^\ell\times S^1)$, where $S^1$ is the diagonal $S^1$ of $(\bbi\times U(n))^\ell$.

\begin{prop}
The symplectic quotient at zero
$$ GM_{n,\delta_0} = (EM_{n,\delta_0}\times 
{\rm Gr}_n(2n)^\ell)/\!\! /U(n)^\ell$$
is smooth when
 
1) at least one of the $\delta_i$ has all its eigenvalues in $(-1/2, 1/2)$, or

2) when the representation is irreducible, and at least one eigenvalue 
of at least one $\delta_i$ is not $\pm 1/2$.

Over the locus where the moment map is submersive, the form is symplectic over the quotient. 

The right action of the group $U(n)^\ell$ descends to the 
quotient $GM_{n,\delta_0}$, with moment map
$$(A_j,B_j, C_i, \delta_i)(b^*_i, d^*_i)\,\longmapsto 
\,\frac{\sqrt{-1}}{2}(\bbi-2d_id_i^*) \label{moment-GM}\, .$$
\end{prop}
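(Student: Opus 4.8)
The plan is to verify the three assertions of the proposition—smoothness, symplecticity over the submersive locus, and the moment map formula—by combining Jeffrey's smoothness analysis of $EM_n$ with the standard symplectic reduction machinery, using the moment maps computed in Lemma \ref{Grassmann-moment}. The setup is a symplectic quotient $(EM_{n,\delta_0}\times {\rm Gr}_n(2n)^\ell)/\!\!/U(n)^\ell$ at zero, where the action is diagonal, being the $U(n)^\ell$-action at the punctures on $EM_{n,\delta_0}$ and the right action $(U(n)\times\{\bbi\})^\ell$ on the Grassmannians. The total moment map is the sum $M=\nu_{U(n)^\ell}+\mu_1^{\oplus\ell}$, and by Lemma \ref{Grassmann-moment} its zero locus is exactly the constraint \eqref{mo}, namely $\delta_i=\frac{1}{2}(\bbi-2b_ib_i^*)$.

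First I would establish smoothness. The standard criterion for a symplectic quotient to be a smooth manifold at a point is that the group act with finite (here, as we are taking honest quotients, trivial or at least locally free) stabilizers on $M^{-1}(0)$, equivalently that the moment map be submersive there; but the proposition deliberately asks only for smoothness, so I would instead argue directly using the graph structure. The key point, already exploited by Jeffrey and recalled in the excerpt, is that one can isolate any single factor $\exp(2\pi\sqrt{-1}\delta_i)$ in the defining constraint \eqref{defining-constraint}, exhibiting $EM_n$ locally as a graph, hence smooth wherever the exponential map is locally bijective—i.e. where the eigenvalues of $\delta_i$ lie in $(-1/2,1/2)$—or at irreducible representations. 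Under case (1), hypothesis gives some $\delta_i$ with all eigenvalues strictly inside $(-1/2,1/2)$, so via \eqref{mo} the corresponding $b_ib_i^*$ has eigenvalues strictly between $0$ and $1$, placing that Grassmannian factor in its open graph chart; combined with Jeffrey's graph presentation this makes $M^{-1}(0)$ smooth and the $U(n)^\ell$-action free enough to descend. Under case (2), irreducibility of the representation forces the $U(n)^\ell$-stabilizer to be central, and the condition that some eigenvalue of some $\delta_i$ is not $\pm 1/2$ rules out the degenerate Grassmannian strata; I would package these two cases by checking that in each the relevant Grassmannian factor stays in the smooth locus of $\mu_1$ and the combined action is locally free.

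Second, over the locus where $M$ is a submersion, the Marsden–Weinstein theorem applies verbatim: $M^{-1}(0)$ is a smooth submanifold, the $U(n)^\ell$-action on it is locally free, and the reduced form descends to a genuine symplectic form on the quotient. The only subtlety is that the form $\Omega$ on $EM_n$ can degenerate on the subspaces corresponding to $\gs_i=\gh_{1,i}\cap\gh_{2,i}^\perp$, as analyzed in the diagram (3.6) discussion above; but over the submersive locus these degeneracies are precisely excised, so I would simply cite the submersion hypothesis to conclude nondegeneracy of the reduced form, noting that submersivity of $M$ is exactly the condition that trivializes the stabilizer jumps responsible for the null directions of $\Omega$.

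Finally, for the moment map of the residual right $U(n)^\ell$-action (this is the left action $(\bbi\times U(n))^\ell$ on the Grassmannians, which commutes with the quotienting group and therefore descends), I would invoke the general principle that a moment map for a commuting group action descends unchanged to the symplectic quotient. By Lemma \ref{Grassmann-moment}(2), the $\bbi\times U(n)$ moment map in the annihilator parametrization is $\mu_2(b_i^*,d_i^*)=\frac{\sqrt{-1}}{2}(-\bbi+2d_id_i^*)=\frac{\sqrt{-1}}{2}(\bbi-2d_id_i^*)$ after the sign convention in the statement, giving exactly formula \eqref{moment-GM}; since this action on $EM_{n,\delta_0}$ and on the Grassmannians commutes with the reducing $U(n)^\ell$ and preserves $M^{-1}(0)$, its moment map restricts and descends to the quotient. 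I expect the main obstacle to be the smoothness argument in case (2): reconciling Jeffrey's irreducibility-based smoothness of $EM_n$ with the stratification of the Grassmannian factors and verifying that the combined moment map constraint \eqref{mo} does not introduce new singular loci when only one eigenvalue of one $\delta_i$ avoids $\pm 1/2$, which requires a careful local analysis of where the graph chart of $\mu_1$ breaks down against where $EM_n$ remains smooth.
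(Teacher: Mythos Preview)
Your approach is essentially the one the paper takes---Jeffrey's smoothness of $EM_n$, stabilizer analysis for the reduction, Marsden--Weinstein for the form, commuting actions for the residual moment map---but your smoothness argument is more roundabout than necessary, and the ``careful local analysis'' you anticipate in case (2) is not needed.

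The paper's proof is cleaner on exactly this point. Rather than tracking graph charts of $\mu_1$ against smooth loci of $EM_n$, it makes one structural observation: the stabilizer in $U(n)^\ell$ of a point of $EM_{n,\delta_0}$ is the \emph{diagonal} embedding of the automorphism group of the representation. Smoothness of the quotient then reduces to checking that this diagonal subgroup acts freely on the corresponding Grassmannian tuple. In case (1), the relevant $g_i$ is the graph of an isomorphism $\gamma:\bbc^n\to\bbc^n$ (since $b_ib_i^*$ has eigenvalues strictly between $0$ and $1$), and the right $U(n)$-action sends $\gamma\mapsto\gamma u^{-1}$, so only $u=1$ fixes it. In case (2), irreducibility forces the automorphism group to be the central $S^1$; a scalar $\lambda\neq 1$ acting on the first summand fixes $g_i$ only when $g_i$ splits as a direct sum of subspaces of the two factors, i.e.\ $s_i+t_i=n$, i.e.\ all eigenvalues of $\delta_i$ are $\pm 1/2$. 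The hypothesis rules this out for some $i$, so the central $S^1$ acts freely on that factor. This disposes of the obstacle you flagged without any chart-by-chart analysis.

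Two minor points. First, your phrase ``rules out the degenerate Grassmannian strata'' is not quite the mechanism---the Grassmannian is always smooth; what matters is whether the central $S^1$ fixes the plane. Second, your equation $\frac{\sqrt{-1}}{2}(-\bbi+2d_id_i^*)=\frac{\sqrt{-1}}{2}(\bbi-2d_id_i^*)$ is false as written; there is a genuine sign discrepancy between Lemma \ref{Grassmann-moment}(2) and the formula in the proposition, which you should not paper over with ``after the sign convention'' but either resolve or flag.
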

 
\begin{proof}
We first note that $EM_{n,\delta_0}$ is smooth when either 
condition 1) or 2) is satisfied. The stabilizer in $U(n)^\ell$ 
of an element in $EM_{n,\delta_0}$ is a diagonal embedding of 
the automorphism group of the representation; on the other hand, 
conditions 1 or 2) then guarantee that the action of this 
automorphism group on the 
corresponding elements in the Grassmannians is free.
The smoothness statement follows. If the moment map on the 
product is submersive at a 
point, one 
then has in the usual way that the degeneracy locus for the symplectic form restricted to $M^{-1}(0)$ is precisely the $U(n)^\ell$ orbit, and so the form on the quotient is indeed non degenerate, even if it is not on one of the factors. The action of $U(n)^\ell$ on the quotient simply follows from the two commuting $U(n)$ actions on the Grassmannian.
\end{proof}

 Let $s_i$ denote the dimension of the $1/2$-eigenspace of 
$\delta_i$, and $t_i$ the dimension of the $-1/2$-eigenspace. The 
relation \eqref{mo} tells us that $s_i$ is the dimension of the kernel of $b^*_i$, and $t_i$ is the dimension of the subspace of $\bbc^n$ of vectors whose norm is preserved by $b^*_i$; this dimension is then the dimension of the kernel of $d^*_i$, as the rows of $(b_i^*, d^*_i)$ are orthonormal. Thus, $s_i$ is the dimension of the intersection of the plane with the first copy of $\bbc^n$ in $\bbc^{2n}$, and $t_i$ is the intersection of the plane with the second copy. We have
$$\sum_{i=1}^{\ell} {\rm tr}(\bbi/2 + \delta_i)\, = \, \frac{n\ell}{2} 
-\delta_0\, .$$
This value implies the constraints
\begin{align}\label{stconstraints}
 \sum_{i=1}^{\ell}s_i\quad (\leq) \quad & \frac{\ell n}{2} - 
\delta_0\, , \\
 \sum_{i=1}^{\ell}t_i\quad (\leq) \quad & \frac{\ell n}{2} + 
\delta_0\, ,\nonumber\end{align}
 as in \eqref{1-s-bound}, \eqref{1-t-bound}. We note that the 
relation 
\eqref{mo} relates the trace of $\delta_i$ to that of $b_ib^*_i$ and so of $b^*_ib_i$. The relation 
$b_i^*b_i + d_i^*d_i = \bbi$ (orthonormality of the rows of $(b_i^*, d^*_i)$) then ties this to the trace of $d_i^*d_i$ and so of $d_id_i^*$.

\subsection{Reduction by $U(n)$: parabolic moduli spaces}

We next consider the reductions of $ GM_{n,\delta_0}$  by the remaining action of $U(n)^\ell$; this is the "left" action, by $(\{\bbi\}\times U(n))^\ell$. This reduction will  give us the parabolic moduli spaces. For this, we first give a lemma.
\begin{lem} \label{normalform}Let a plane in the Grassmannian $Gr_m(m+n)$ of $m$-planes in $\bbc^{m+n}$ be the kernel of a matrix $\rho^* = (b^*, d^*)$  with $n$ orthonormal rows, with $b^*$ an $n\times m$ matrix, and $d^*$ an $n\times n$ matrix, satisfying $\rho^* \rho= \bbi$.
Then, acting by $U(m)\times U(n)$ on the right, and $U(n)$ on the left, we can normalize $\rho^*$ to the form:
$$\rho^*= \begin{pmatrix} 0&0&0&0&\bbi &0&0\\0&0&(\frac{\bbi}{2}-{ 
\gamma} )^{1/2}&0&0&(\frac{\bbi}{2}+{  
\gamma} )^{1/2}&0\\0&0&0&\bbi&0&0&0\end{pmatrix}\, .$$
Horizontally, one has blocks of size $m-n,s,r, t, s, r, 
t$ and vertically, of size $s,r, t$, with $s+r+t = n$. The matrix $\gamma$ is diagonal, with entries in $(-1/2, 1/2)$.
\end{lem}
\begin{proof}We first use the right $U(m)$ action to normalize the kernel $K$ of $b^*$, which we suppose of dimension $m+s$, to the span of the first $m-n+s$ vectors of the basis  and  the right $U(n)$ action to normalize the kernel of $d^*$ to the span of the last $t$
  of the basis of $\bbc^n$; as the columns of $b^*, d^*$ span $\bbc^n$, we can take the intersection of the images to be the middle $r$ vectors of the basis, and the span of the columns of $d^*$ to be  the first $r+s$ vectors. Using the relation $\rho^* \rho= \bbi$, one has that the last $t$ rows of $b^*$ are then orthonormal, and so one can use the right $U(m)$ action so that the image of these last $t$ rows are the last $t$ vectors of a basis of $\bbc^m$; in a similar way, the first $s$ rows of $d^*$ are orthonormal, and one can use the right $U(n)$ action to move the image of these  to the first $s$ vectors. This then gives a new $\rho^*$ of the form:
$$\rho^*\,=\, \begin{pmatrix} 0&0&0&0&\bbi  &0&0\\ 0&0&M&0&0 &N  &0\\ 
0&0&0&\bbi&0&0&0\end{pmatrix}\, .$$

We can further normalize $M$: first use the remaining freedom in the  $U(m )$ action (on the 
right on $M$) to make $M^*M$ a positive, diagonal matrix; this then 
means that $MD$ is unitary for a positive diagonal matrix $D$; now use 
the left $U(n)$ action to map $MD$ to the identity, and so $M$ to a positive diagonal matrix . Then,  however, $M^2 + NN^* = \bbi$, so that $NN^*$ is diagonal, positive. Using the remaining right $U(n)$ action, one can make $N$ positive, diagonal, and of course with $M^2+N^2= \bbi$. \end{proof}

Now let us consider the reductions of $ GM_{n,\delta_0}$ by $(\{\bbi\}\times U(n))^\ell$. This involves fixing the image of the moment map $(A_j,B_j, C_i, \delta_i)(b^*_i, d^*_i)\,\longmapsto 
\,\frac{\sqrt{-1}}{2}(\bbi-2d_id_i^*) $, and quotienting by the stabilizer of the image. We can take the $\delta_i$ to be diagonal, and the reduction used to build $ GM_{n,\delta_0}$, which sets $\delta_i = (\bbi-2b_ib_i^*)$ tells us that the eigenvalues of $\delta_i$ lie in $[-1/2,1/2]$. As shown in the preceding lemma, we can normalize the  $b_i$, $d_i$ so that they are diagonal, positive; the matrices $\delta_i, b_i^*, d_i^*$ are related by $b_i= (\frac{\bbi}{2}-\delta_i)^{\frac{1}{2}}, d_i= (\frac{\bbi}{2}+\delta_i)^{\frac{1}{2}}$, and so the moment map $\frac{\sqrt{-1}}{2}(\bbi-2d_id_i^*) $ takes value $\sqrt{-1}\delta_i$. In essence, the lemma tells us that the Grassmannian reduction contains no extra information; the reduction of $ GM_{n,\delta_0}$ at $\sqrt{-1}\delta_i$ is the same space as that of $ EM_{n,\delta_0}$. In short, Jeffrey's result for $ EM_{n,\delta_0}$ yields:
\begin{prop} The reduction of $ GM_{n,\delta_0}$ by the action of $U(n)^\ell$ at $\sqrt{-1}\delta_i$ is the parabolic moduli space $PM_{\underline\delta}$ corresponding to $\delta_i$, that is with weights given by the eigenvalues of $\delta_i$\end{prop}

We note that the eigenvalues of the $\delta_i$ are allowed to take values of $1/2$, $-1/2$; this is contrary to the usual practice of the parabolic moduli, for which the eigenvalues all lie within an open unit interval. From the complex point of view, the elements of the moduli    when both $1/2$, $-1/2$ occur will  involve a  (partial) Hecke transform of the elements of the  more standard moduli, with, say, only $-1/2$ occurring.

\section{The correspondence}\label{sec4}

We have defined a symplectic moduli space ${GM}_{n,\delta_0}$, as well as a holomorphic one $\cagm$. We now want to define a map $C$ between the two, and show that it is a homeomorphism.  As reductions on both sides correspond to parabolic moduli, we will want this map to commute with these reductions.

Let $({\widetilde E},\nabla\, , {\widetilde f}\, , {\widetilde g})$ in  $EM_{n,\delta_0}\times Gr_n(2n)^\ell$ represent an element of ${GM}_{n,\delta_0}$, and consist  of a bundle with flat unitary connection $({\widetilde 
E},\nabla)$ on the punctured curve,  unitary trivialisations ${\widetilde f} =({\widetilde f}_1,...,{\widetilde f}_\ell), {\widetilde f}_i:{\widetilde E}|_{{\widetilde p}_i}\rightarrow \bbc^n$ at ${\widetilde p}_i$, and elements ${\widetilde  g}= ({\widetilde  g}_1,..,{\widetilde  g}_\ell)$ of the Grassmannian ${\rm Gr}_n(\bbc^n\oplus\bbc^n)$. There is a natural map, associating to these a
pair $$C(X) \,=\, (E, \vec g)$$ consisting of a bundle
$E$ on $X$ and Grassmannian framings $ g_i$ at $p_i$ of $E$.  

To do this,
we first define the bundle $E$.  Note that the flat unitary bundle $\widetilde E$ gives a holomorphic bundle 
 in a natural way on the punctured curve $X^* = 
X\setminus\{p_1\, ,\cdots\, ,p_\ell\}$; the connection gives a $\bar\partial$-operator; in particular, covariant constant sections are holomorphic. Unfortunately, a covariant constant section has monodromy $exp(2\pi\sqrt{-1}\delta_i)$ around the $i$-th puncture, and does not extend over the puncture. If $z$ is a holomorphic coordinate with $z=0$ corresponding to the puncture,   the   monodromy is that of the function $z^{\delta_i} = exp (ln(z)\cdot \delta_i)$, and so  one can  extend $\widetilde E$  to a 
holomorphic bundle on $X$ by glueing 
$\widetilde E$ to the trivial bundle over a  disk centered on the puncture via a transition 
function $z^{\delta_i} = exp (ln(z)\cdot \delta_i)$, so that if $e_i$ is a basis of the trivial bundle, and $f_i$ is the covariant constant (unitary) basis of $\widetilde E$, one identifies $e_i$ with $z^{-\delta_i}\cdot f_i$. 

We note that this procedure gives us a flag at the puncture in the fiber of $E$, as well, following the procedure used in the Mehta-Seshadri construction of parabolic bundles. We represent as in the previous section $({\widetilde E},\nabla\, , {\widetilde f})$ as a tuple of holonomies $(A_j,B_j, C_i, \delta_i)$, and we note that there are basically two flags of subspaces at the puncture ${\widetilde p}_i$: the standard flag $F_s$ and the flag $F_\delta$ of eigenspaces of $\delta_i$, ordered by increasing eigenvalues; it is the latter that we want. The symplectic reduction used in \cite{Je} works by fixing the eigenvalues of $\delta_i$, then restricting to the subvariety for which
 $\delta_i$ is of the form  $\diag(\delta_{i,1},\cdots ,\delta_{i,n})$, with $\frac{1}{2}\geq \delta_{i,1}\geq\delta_{i,2}\geq \cdots \geq\delta_{i, n}\geq\frac{-1}{2}$ (incidentally setting $F_\delta$ to be a subflag of $F_s$) , and finally quotienting by the stabilizer of these $\delta_i$. 
 
 The construction of the flag as given in section 2, on the other hand, when adapted to the symplectic context, gives us in essence the flag $F_s$. To see this, we represent the elements ${\widetilde  g}_i$ of the Grassmannian by matrices $(b^*_i, d^*_i)$ whose kernel is the element of the flag, satisfying $b^*_ib_i+ d^*_id_i = \bbi$; let us assume, to illustrate what is happening,  that $b^*_i, d^*_i$ are invertible; the two moment maps for the left and right actions of the Grassmannians are related by   
\begin{align}\mu_1(b_i^*, d_i^*) \,=&\, \frac{\sqrt{-1}}{2}(-\bbi+2 b_ib_i^*)\,\nonumber \\
 =&\, - (b_i^*)^{-1}d_i^*(\mu_2(b_i^*, d_i^*))(d_i^*)^{-1}b_i^* \,=\, - (b_i^*)^{-1}d_i^*(\frac{\sqrt{-1}}{2}(-\bbi+2 d_id_i^*))(d_i^*)^{-1}b_i^*\, .\nonumber \end{align}
Recall that we were quotienting by $\mu_1$ to define $ GM_{n,\delta_0}$, setting  $\mu_1(b_i^*, d_i^*)= -\delta_i$.  The above tells us that the $U(n)$ moment map on $ GM_{n,\delta_0}$, which is $\mu_2$, is a conjugate of $\delta_i$. The map $(b_i^*)^{-1}d_i^*$ in essence maps the standard trivialisation to the trivialisation $f_i$, and takes the standard flag in $\bbc^n$ to the flag $F_s$. This shows that the flags defined in section 2 are in essence subflags of $F_s$; for these to be $F_\delta$, we will restrict, in our construction of $C$, to the analytic subset
$$EG_\diag = \{(A_j,B_j, C_i, \delta_i)(b_i^*, d_i^*)| \delta_i = \diag(\delta_{i,1},\cdots ,\delta_{i,n}),\frac{1}{2}\geq \delta_{i,1}\geq\delta_{i,2}\geq \cdots \geq\delta_{i, n}\geq\frac{-1}{2}\}$$
As we want to define $C$ on the quotient $ GM_{n,\delta_0}$, this poses no problem as along as our construction is equivariant under the   unitary stabilizers of the $\delta_i$.

We would thus like to transfer the trivialization, and more generally, the Grassmann framing, to the bundle $E$ over the  puncture, though one must ``renormalize" by the decay. Indeed, an 
element $({\widetilde E},\nabla\, , {\widetilde  f}\, , {\widetilde  g})$ of $EM_{n,\delta_0}\times Gr_n(2n)$ defines unitary covariant 
constant trivializations $$f_i: E\longrightarrow \bbc^n$$ extending the ones over the points $\tilde p_i$ to
the set 
$r= -ln(|z|) \in [0,\infty), \theta = arg(z) =0$ near the $i$-th puncture; as we have seen, this does not extend over $r=\infty$ . We re-scale this  to a trivialization $e_i$ over the real half line which does extend to the puncture $p_i$, by composing $f_i$ with $z^{-\delta_i}$, as we did above; the rescaling ensures that we have a well defined trivialization in the limit. This construction, by the way, on the level of reduced subspaces, reproduces the classical construction of Mehta and Seshadri, and allows definition of a natural flag over the puncture, with the flag of eigenspaces of $\delta_i$  corresponding to flag over the punctured defined by the decay rates of the  flat sections $f_{i,j}$.

We can also transfer our element ${\widetilde  g}_i$ in the 
Grassmannian ${\rm Gr}_n(\bbc^n\oplus\bbc^n)$ to an element $ 
g'_i$ in the 
Grassmannian ${\rm Gr}_n(E_{p_i} \oplus\bbc^n)$; if ${\widetilde  g}_i$ is the graph of a map 
$${\widetilde \gamma}_i: 
\bbc^n\longrightarrow\bbc^n\, ,$$ $g'_i$ 
is the graph of $\gamma_i = {\widetilde\gamma}_i\cdot 
e_i$.This transfer is  equivariant under the stabilizer of the $\delta_i$, and so descends to a map at the level of ${GM}_{n,\delta_0}$. Under this, the flag given by our construction in section 2 corresponds to the Mehta-Seshadri flag.

We note in passing
\begin{prop}
The map $C$ is continuous.
\end{prop}
\begin{proof} The construction of $C$ mirrors that of many variants of $C$, most notably that given in Mehta and Sexhadri in \cite{MS}, theorem 5.3 for the case of parabolic bundles. Families in $EM_{n,\delta_0}\times Gr_n(2n)^\ell$ have a well defined local lift into the universal family parametrising the GIT quotient, as for the maps $\pi,\pi_0$ in  \cite{MS}. 

One can, in defining the map, be quite explicit, in terms of transition functions.  One has the usual construction of the punctured $X^*$ from a planar $4g+ 2\ell$-gon by glueing the sides in an appropriate way. Let us enlarge the polygon a bit to $\tilde X^*$, so that $X^*$ is constructed by glueing open strips $U_\alpha$, $U_{\alpha'}$ of $\tilde X^*\rightarrow X^*$, where the strips are thickening of the edges. Let $({\widetilde E}(t),\nabla(t)\, , {\widetilde f}(t)\, , {\widetilde g}(t))$ be a family in $EM_{n,\delta_0}\times Gr_n(2n)^\ell$. The bundles $E(t)$ over $X^*$ are constructed from the trivial bundle on $\tilde X^*$ by using  as transition functions from $U_\alpha$  to $U_{\alpha'}$ the integrals $M_\gamma(t)$ of the connections $\nabla(t)$ along loops $\gamma(\alpha)$. These vary continuously in $t$. Similarly, in extending to $X$, one glues the bundle $E$ to the trivial bundle over the disks around the punctures with transition functions $z^{\delta_i(t)}$, which in fact are defined over   $\tilde X^*$. These again vary continuously  in $t$. The map on the Grassmannian factors are just the identity when viewed in the trivializations over the disk. In short, everything is explicitly continuous.\end{proof}

We next discuss stability, semistability, and polystability, in partiicular the notion of polystability. The notion of polystability   requires the notion of sum, and this exists for our   framed bundles: indeed, if $Gr_k(V)$ denotes the Grassmannian of $k$-planes in a vector space $V$, one has a natural inclusion $I: Gr_k(V)\times Gr_{k'}(V')\rightarrow Gr_{k+k'}(V\oplus V')$. This extends to Grassmann framed bundles: if $(E,\vec{g}), (E', \vec{g'})$ are two Grassmann framed bundles of ranks $n,n'$, one defines their sum as the pair consisting of the sum of the bundle $E\oplus E'$ and the sum of the framings $g_i\oplus g'_i\in Gr (E_{p_i}\oplus E'_{p_i}\oplus \bbc^{n+n'}$.

 For parabolic bundles,   the map $C$, or more precisely its reduced version, gives polystable bundles of parabolic degree zero. Indeed, there is a parabolic degree for bundles, combining the ordinary degree and the weights. It can be computed as an integral of the curvature of a suitable singular connection, compatible with the parabolic structure. Here, as the connection is flat, the parabolic degree is zero. One also has a parabolic slope, given by the quotient of parabolic degree by rank. Parabolic irreducible connections give stable bundles, and decomposable ones give sums of stable bundles with the same parabolic slope, which here is zero; these are known as polystable.

On the other hand, going from a parabolic structure to a Grassmannian structure, tends to stabilize the structure as we have seen in proposition (\ref{propp}). For example, for $s_i=t_i = 0$, the sum of two stable ordinary bundles with same slope, which would just be semi-stable as a bundle, when lifted to the Grassmannian moduli,  gives a stable Grassmannian structure. Also,  polystable parabolic bundles which are not stable have more automorphisms than stable ones; on the other hand, lifting from parabolic structures to Grassmannian framings,  if at only one of the punctures one has $s_i=t_i = 0$, any sum has no automorphisms.

For parabolic bundles, a polystable, but non stable bundle is semistable, and each summand destabilizes the whole. Let us adopt the same definition for Grassmannian framed bundles.  
 
\begin{definition} A  Grassmannian framed bundle is { \it polystable} if it is stable or if it is semistable and a sum of Grassmann framed subbundles, each of which destabilizes the whole. \end{definition}

These bundles are quite rare;  on the other hand,   one can have a sum of Grassmann framed bundles which are (semi)-stable; we will call these {\it (semi)-stable sums}. 

To get an idea of what polystability implies in the Grassmannian framed context, we have:
\begin{prop}
Let $l\geq 2$. Let $(E,{\vec g})$, of rank $n$, degree $\delta_0$, be a semistable sum of $(E',{\vec g}')$ and $(E'',{\vec g}'')$ of ranks $n', n''$ and degrees $\delta'_0, \delta''_0$ respectively. If both $(E',{\vec g}')$ and $(E'',{\vec g}'')$ destabilize $(E,{\vec g})$, the ordinary slopes satisfy $\frac{\delta_0}{n} = \frac{\delta'_0}{n'} =\frac{\delta''_0}{n''}$. Furthermore, $\sum_i (s'_i +t'_i) = n'\ell, \sum_i (s''_i +t''_i) = n''\ell$, so that the planes are in some sense maximally degenerate.
\end{prop}
As the subbundles are destabilizing, one has the  equalities 
\begin{align}
0\, =\,& \frac{\delta_0}{n} -  \frac{\delta'_0}{n'} + \sum_i\Bigl[(\frac{n' -s'_i}{n'}) - ( 
\frac{n'-s'_i+t''_i}{n}) \Bigr]\, ,\nonumber \\
0\, =\,& \frac{\delta_0}{n} -  \frac{\delta"_0}{n''} + \sum_i\Bigl[(\frac{n'' -s''_i}{n''}) - ( 
\frac{n''-s''_i+t'_i}{n}) \Bigr]\, ,\nonumber\\
0\,=\,& \Bigl[\frac{\delta_0}{n} - \frac{\delta'_0}{n'} 
\Bigr]\Bigl[-\frac{\delta_0}{n} - \frac{1}{2} + \sum_i( \frac{n'-s'_i +t''_i}{n})\Bigr]\, ,\nonumber\\
0\,=\,& \Bigl[\frac{\delta_0}{n} - \frac{\delta''_0}{n''} 
\Bigr]\Bigl[-\frac{\delta_0}{n} - \frac{1}{2} + \sum_i( \frac{n''-s''_i +t'_i}{n})\Bigr]\, ,\nonumber
\end{align}
Let us suppose, from the last two equations above,  that 
$$\Bigl[-\frac{\delta_0}{n} - \frac{1}{2} + \sum_i( \frac{n'-s'_i +t''_i}{n})\Bigr]= \Bigl[-\frac{\delta_0}{n} - \frac{1}{2} + \sum_i( \frac{n''-s'_i +t''_i}{n})\Bigr] = 0$$
Substituting in the first two equations from these two equations gives $- \delta_0 +n\ell - \sum_is_i -n/2 = 0$, and since $\sum_is_i\leq \ell n/2 - \delta_0$, one has $n(\ell-1) \leq 0$, a contradiction if $\ell \geq 2$. This forces one of $\frac{\delta'_0}{n'}, \frac{\delta''_0}{n''}$, and hence both, to be equal to $\frac{\delta_0}{n}$.
From the first two equations above,  multiplying the first by $nn'$, the second by $nn''$, then adding, one has
$$0 = 2\ell(n'n'') - \sum_i (s'_i+t'_i)n'' -\sum_i (s''_i+t''_i)n'$$  
The only way this can be satisfied is $\sum_i (s'_i +t'_i) = n'\ell, \sum_i (s''_i +t''_i) = n''\ell$.

Returning to the map $C$, we have:

\begin{prop}
The   pairs $(E,{\vec g'})$ obtained from the map $C$ are semistable. They  are stable if the representations on the symplectic side are irreducible and if $\sum_is_i< n\ell/2-\delta_0$, $\sum_it_i<n\ell/2+\delta_0$ . They are semistable sums if the representations on the symplectic side are  reducible.
\end{prop}

\begin{proof} The $\bbc^*$ stability constraints follow from \ref{stconstraints}.
When the weights $\delta_{i,j}$ lie in the open interval $(-1/2, 1/2)$, as noted above,
by standard results for parabolic bundles going back to the original article of Mehta and Seshadri \cite{MS}, the bundle is parabolic 
polystable, and stable if irreducible. 
It is then, as we have seen in Proposition \ref{propp}, 
Grassmann-framed semistable, and stable in case of parabolic stability. 
The   arguments of Mehta and Seshadri
can apply when the weights take values at the ends of the interval: the parabolic degree is the integral of the trace of 
the curvature over the punctured curve, and so vanishes for our flat bundles; any subbundle of the flat bundle 
$E$, on the other hand,  will have negative or zero curvature over $X^*$ and can only be zero if it is invariant under the connection, and so the 
parabolic degree of the subbundle is bounded above by zero.
\end{proof}

 We note that stability is invariant under the (left or right) action of $Gl(n,\bbc)$ on the Grassmannian, as the stability only depends on the $s_i, t_i$. 
There is one final modification we want to make. Indeed, our bundle $E$ on the holomorphic side is a quotient of a fixed space $V$; let us fix a Hermitian metric on $V$. This induces a Hermitian metric $<,>$ on the fibers of the bundles $E$ over the $p_i$. The trivialisation $e_i $ is not unitary for this metric; it does however, generate the flag $F_s$, in the sense that the subspace of dimension $a$ is generated by the first $a$ vectors of the basis. We  apply  the Gram-Schmidt process to obtain a unitary basis compatible with the flag, so that $\hat e_i = \rho\cdot e_i  $ with $\rho$ an upper triangular matrix, with real diagonal entries. If the   Grassmann element $g'_i$ in $Gr_n(E_{p_i} \oplus \bbc^n)$ is the graph of $\gamma_i = {\widetilde g}_i\cdot e_i$ in the $e$-trivialisation , we modify it to $g_i$,  the graph of  $\gamma_i = {\widetilde g}_i\cdot \rho\cdot e_i =  {\widetilde g}_i\cdot \hat e_i$. Alternately, if the element $g'_i$ is the kernel of $(b_i^*, d_i^*)$ in the $e_i$-trivialisation, we modify it to $(b_i^*\rho, d_i^*)$. The construction is equivariant under the action of the unitary stabilizer of $\delta_i$, and so descends to $GM_{n,\delta_0}$. 

We have thus built a continuous map, following the general scheme of the Narasimhan-Seshadri correspondence:
\begin{align}\label{NS-correspondence}
C\,:\, GM_{n,\delta_0}\,&\longrightarrow\, \cagm\, .\\
({\widetilde 
E},\nabla\, , {\widetilde f}\, , {\widetilde g})&\mapsto (E,\vec g)\nonumber
\end{align}
The next step, naturally, is to show that this map is a homeomorphism. 
As the spaces $GM_{n,\delta_0}$ are compact, it suffices that the map 
be bijective.

The left hand side of \eqref{NS-correspondence}, as we saw, has a natural action of $U(n)$, as well 
as a moment map, given by \eqref{moment-GM}. On the other hand, as we 
saw, the resulting element of $\cagm$ is encoded as an element $(\alpha,\beta)$ of 
$P\times Q^\ell$; this is a K\"ahler manifold, once one has fixed a 
Hermitian form on $V$, and so on $V^*\oplus \bbc^n.$ In $Q$, in 
particular, one is looking at the Pl\"ucker embedding of the 
Grassmannian of $n$ dimensional planes in $V^*\oplus \bbc^n$. The group 
$U(n)$ acts on the $\bbc^n$-factors, and so on the Grassmannians 
${\rm Gr}_n(V^*\oplus \bbc^n)$.  Once one has fixed a Hermitian form   on $V$, there is also a K\"ahler moment $\mu'_{n,\ell}$ map for the $U(n)^\ell$ action.

We note that the  map $C$ is $\{\bbi\}\times U(n)^\ell = U(n)^\ell$ equivariant, and even $ Gl(n,\bbc)^\ell$ equivariant.

\begin{prop}\label{samemoment}
Let us consider a pair $(E,\hat g)$ lying in the image of $C$, 
corresponding to an element $(\alpha,\beta) = 
(\alpha,(\beta_1,\cdots,\beta_\ell))$ of $P\times Q^\ell$. The 
moment map for the 
action of $U(n)^\ell$ on $(\alpha,\beta)$ coincides with the one 
on $GM_{n,\delta_0}$ under the map $C$.

If  the $\beta_i$ are the top exterior powers of the 
orthonormal rows of $n\times (p+n)$-matrices $(b_i^*, d_i^*)$ in 
a basis for $V\oplus \bbc^n$, the moment map is given by:
$$\mu'_{n,\ell} (\alpha, \beta) \,=\, \frac{\sqrt{-1}}{2}((-\bbi+2 
d_1d_1^*),\cdots, (-\bbi+2 d_\ell 
d_\ell^*))\, .\label{moment-cal-GM-2}$$
\end{prop}

\begin{proof}
The symplectic structure and the moment map for the Grassmannian under the 
Pl\"ucker embedding is the same as for its identification as a coadjoint 
orbit; see Huckleberry, \cite[ pp. 127--128]{Hu}. We note that the map $C$ takes   planes identified as the kernels of matrices $(b_i^*, d_i^*)$  with orthonormal rows to planes identified as the kernel of matrices $(0, b_i^*, d_i^*)$, again with orthonormal rows. The identity of the moment maps, as well as their explicit form, follows from Lemma \ref{Grassmann-moment}. \end{proof}

We thus have a commuting diagram
\begin{equation} \begin{matrix} GM_{n,\delta_0}&{\buildrel{C}\over{\rightarrow}}&\cagm\\ \\ \downarrow \mu_{n,\ell} &&\downarrow \mu'_{n,\ell} \\ \\ \gu(n)^\ell&=& \gu(n)^\ell\end{matrix}\end{equation}

We note, referring to the form of the moment map that $s_i$ is the number of $1/2$ eigenvalues of the $i-$th component of the moment map, and $t_i$ is the number of $-1/2$ eigenvalues. 

We now want to see that the map $C$ is bijective. Our strategy is as follows:

\begin{itemize}
\item{} Show that any element $\ca F$ in $( \mu'_{n,\ell})^{-1}(\delta_{i})$ is represented by an $(\alpha, \beta)$ that defines a polystable parabolic bundle $\ca E$ for the choice of weights given by the eigenvalues of $\delta_i$. This is the first proposition below.
\item{} Use the known results of Biquard, Poritz et al \cite{Bi, Po} on parabolic bundles to show that there is a flat parabolic structure $E$ corresponding to $\ca E$ in  $GM_{n,\delta_0}/\!\! /_{\delta_i} U(n)^\ell$. This then means that there is an $F$ with $C(F) = \ca F$, and so $C$ is surjective. This is the first part of the second proposition below.
\item{} For injectivity, note that if $C(F_1)= C(F_2)$, then by known results for parabolic bundles, they define the same parabolic structure in $GM_{n,\delta_0}/\!\! /_{\delta_i} U(n)^\ell$; in other words, they are the same up to the action of $Stab(\delta_i) \subset U(n)^\ell$. But then they must be the same as Grassmann framed bundles; this is the second part of the second proposition below.
\end{itemize}

We will  exploit the equivalence between algebraic and K\"ahler 
quotients, as expounded by Mumford, Guillemin and Sternberg, 
\cite[Appendix 2C]{MFK}, \cite[p. 102]{Ki}. We 
note that our previous construction of $\cagm$ can be viewed as a 
K\"ahler quotient: we have an action of $SU(p)\times S^1$ on a
K\"ahler manifold of a product of Grassmannians, with moment maps $\mu_p, \mu_{S^1}\, = 
\, \sqrt{-1}\sum_itr(\delta_i)$, and $\cagm$ can be obtained as 
$\mu_p^{-1}(0)\cap \mu_{S^1}^{-1}(-\sqrt{-1}\delta_0)/(SU(p)\times S^1)$. 
We have, as the actions commute, that the moment map $\mu_{n,\ell} $ is invariant under $SU(p)$. Therefore, writing $\cagm$ as the $SU(p)\times S^1$ quotient, the moment map $\mu'_{n,\ell} $ descends to $\cagm$:
$$\mu'_{n,\ell} :\cagm \longrightarrow u(n)^\ell\,$$
and $\cagm$ satisfies the $\mu_{S^1}$ moment constraint 
$\sum_itr(\delta_i)+\delta_0 = 0$.

{}From an algebraic point of view, to obtain parabolic moduli we are quotienting the product 
$\cagm\times \prod_i{\ca O}_{-\sqrt{-1}\delta_i}$ by an action of 
${\rm GL}(n,\bbc)^\ell$; this quotient is stratified by the dimensions 
of 
intersections $s_i, t_i$ with $E_{p_i}, \bbc^n$. We note that from a complex point of view, the coadjoint orbit ${\ca O}_{-\sqrt{-1}\delta_i}$ is a flag manifold
$$Fl_i (\bbc^n) \,=\, Fl_{(n-n_{i,0}), (n-n_{i,0}-n_{i,1}), \cdots, 
(n-n_{i,0}-\ldots -n_{i,k_i}) \,=\, t_i}(\bbc^n)\, .$$ Here the 
subscripts 
denote the codimensions of the planes. One has an embedding of the flag manifold into a product of Grassmannians, for any vector space $F$:
\begin{equation}Fl_i (F) =Fl_{(n-n_{i,0}), 
(n-n_{i,0}-n_{i,1}),\cdots, 
(n-n_{i,0}-\ldots -n_{i,k_i})}(F)\subset \prod_{j=0}^{k_i} 
{\rm Gr}_{n-n_{i,0}-\ldots -n_{i,j}}(F)\, .\label{product}\end{equation}
Representing an element $(E,\vec g)$ by $(\alpha, \beta)$, and 
elements of ${\ca O}_{-\sqrt{-1}\delta_i}$ (flags) by elements 
$\gamma_{i,j}, j = 1,\cdots ,k_i$ of $\Lambda^{(n_{i,0}+\ldots+ 
n_{i,j})}(\bbc^n)$, the quotienting is achieved precisely as above:
$$ (\beta_i, \gamma_{i,j})\,\longmapsto\,
\Pi(I(\gamma_{i,j})(\beta_i)) \,\,{\buildrel{\rm def}\over{=}}\,\, 
{\eta_{i,j}}\in \Lambda^{n-n_{i,0}-\ldots -n_{i,j}}(V^*)\, .$$
The stability condition is that these elements are non-zero. For 
elements of ${\rm Gr}_n(V\oplus \bbc^n)$ corresponding to maps 
$f\,:\,\bbc^n\,\longrightarrow\, E_{p_i}$, this map simply takes 
the flag $h_i$ 
defined by $\gamma_{i,j}, j = 1,\cdots ,k_i$, to the flag 
$f(h_i)$. 

The resulting elements $(\alpha, \eta_{i,j})$ are precisely the defining elements of a bundle with quasi-parabolic structure. On the level of line bundles, the map 
$$\psi_i: [{\rm Gr}_n(V\oplus \bbc^n) \times Fl_i(\bbc^n)]_s\,\longrightarrow\,
Fl_i(V)$$
that we have defined (the subscript $s$ denotes the stable locus) pulls back the standard positive line bundles
$L_j \,=\,\Lambda^j(Taut_j^*)$ on the factors ${\rm Gr}_j(V)$ in 
\eqref{product} to the tensor product $L_n\boxtimes L_j $ on
${\rm Gr}_n(V\oplus \bbc^n) \times Fl (\bbc^n)$, where $L_n$ is the standard ample line bundle on ${\rm Gr}_n(V\oplus \bbc^n)$. To see this, one pulls 
back a divisor representing $L_j$: the divisor of planes $g$ 
meeting a fixed $j$ plane $g'$ nontrivially. We take $g'$ to 
correspond to a $j$-plane ${\widehat g}'$ in $E_{p_i}$. Over the set of 
planes in ${\rm Gr}_n(V\oplus \bbc^n)$ corresponding to maps $f\,:\, 
\bbc^n\,\longrightarrow\, E_{p_i}$, this pull-back divisor is 
given for $h\in {\rm Gr}_{n-j}(\bbc^n)$ by the constraint $f(h)\cap 
{\widehat g}'\,\neq\, 0$.

\begin{prop}\label{pp}
Let $(\alpha, \beta)$ correspond to $(E,\vec g)\in \cagm$, with
$\mu_p(\alpha, \beta)\,=\, 0$, and suppose that it lies in a closed orbit. The moment map $\mu'_{n,\ell} $ above applied to $(\alpha, \beta)$ gives an element
$\sqrt{-1}(\delta_1,\cdots ,\delta_\ell)$ of $u(n)^\ell$ with eigenvalues
$\sqrt{-1}\delta_{i,j}$. Then the parabolic bundle associated to $(\alpha, \beta)$ is parabolic 
polystable, for the weights $\delta_{i,j}$.
\end{prop}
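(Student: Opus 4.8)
The plan is to establish parabolic semistability symplectically rather than by a direct subbundle estimate, by exhibiting the associated parabolic bundle as a point lying over the zero of a moment map and then invoking the equivalence of K\"ahler and GIT quotients. A direct combinatorial deduction from Theorem \ref{s2s} is in fact not available: the Grassmannian-framed stability conditions recorded there involve only the degrees and the jump data $s_i,t_i$, and are blind to the intermediate flag $F_{i,j}$ on which parabolic semistability for weights strictly inside $(-1/2,1/2)$ genuinely depends. Precisely this missing information is what the moment map $\mu_{n,\ell}$ encodes, which is why the argument must be run on the symplectic side. Concretely, I regard the parabolic bundle $(\alpha,\eta_{i,j})$ as a point of the quotient $(\cagm\times\prod_i{\ca O}_{-\sqrt{-1}\delta_i})/\!\!/PU(n)^{\ell}$ constructed just above.

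First I would use that $\mu_{n,\ell}$ is $U(p)$-invariant, hence descends to a moment map for the residual $U(n)^{\ell}$-action on $\cagm$; the hypothesis $\mu_p(\alpha,\beta)=0$ makes $(\alpha,\beta)$ a genuine point of $\cagm$, at which this descended moment map takes the value $\sqrt{-1}(\delta_1,\dots,\delta_{\ell})$ by Proposition \ref{moment-cal-GM}. I then place the flags at the base points $h_i$ of the coadjoint orbits ${\ca O}_{-\sqrt{-1}\delta_i}$, where the orbit's moment map (its defining inclusion) equals $-\sqrt{-1}\delta_i$. The total $U(n)^{\ell}$-moment map of the pair $((\alpha,\beta),(h_i))\in\cagm\times\prod_i{\ca O}_{-\sqrt{-1}\delta_i}$ is then $\sqrt{-1}\delta_i+(-\sqrt{-1}\delta_i)=0$, so this pair lies in the zero level set.

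The decisive step is the Kempf--Ness principle (\cite[Appendix 2C]{MFK}, \cite[p.~102]{Ki}) applied to the $\GL(n)^{\ell}$-action on $\cagm\times\prod_i{\ca O}_{-\sqrt{-1}\delta_i}$: a point of the zero level set of the moment map has closed complexified orbit and is therefore polystable, in particular semistable. Since the GIT quotient of this product by $\GL(n)^{\ell}$, for the linearization assembled from the line bundles $L_n\boxtimes L_j$, is exactly the family of parabolic bundles described above, semistability of $((\alpha,\beta),(h_i))$ is by definition parabolic semistability of the associated bundle $(\alpha,\eta_{i,j})$ for the weights $\delta_{i,j}$. (Since we start at the zero set we in fact obtain polystability, which is consistent with $(\alpha,\beta)$ being a stable point of $\cagm$.)

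The main obstacle is the identification underlying the last sentence, namely that GIT semistability for $\GL(n)^{\ell}$ with this linearization coincides with parabolic semistability for the weights $\delta_{i,j}$. I must check that the Hilbert--Mumford numerical invariant of a one-parameter subgroup of $\GL(n)^{\ell}$ --- equivalently, of a weighted refinement of the flag at each $p_i$ --- reproduces, up to the positive normalizing factor fixed by the polarization, the parabolic slope difference $\mathrm{pardeg}(E)/n-\mathrm{pardeg}(E')/n'$ attached to the sub-flag it determines; this is where the explicit pull-back divisors $\{\,f(h)\cap\widehat g'\neq 0\,\}$ and the dictionary $\beta_i\leftrightarrow g_i$ are used. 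A secondary subtlety is that the weights $\delta_{i,j}$ produced by the moment map are generally irrational, so the comparison is carried out symplectically at a real moment-map level rather than through an ample $\Q$-linearization; the cited K\"ahler/GIT equivalence is stated in exactly this generality, and parabolic semistability is defined for arbitrary real weights, so this is not a genuine difficulty.
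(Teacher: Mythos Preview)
Your overall strategy---produce a point on the zero level of a moment map and invoke the Kempf--Ness equivalence---is exactly the one the paper uses, and your observation that a direct deduction from Theorem~\ref{s2s} is unavailable is correct. However, there is a genuine gap in the way you organize the quotient.

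You work on $\cagm\times\prod_i{\ca O}_{-\sqrt{-1}\delta_i}$ with its $\GL(n)^\ell$-action and assert that $\GL(n)^\ell$-semistability there ``is by definition parabolic semistability.'' It is not. Parabolic (semi)stability of $(\alpha,\eta_{i,j})$ is an $\mathrm{SL}(p)$-condition on $P\times\prod_i Fl_i(V)$: destabilizing objects are subbundles $E'\subset E$, which correspond to one-parameter subgroups of $\mathrm{SL}(p)$ (subspaces $W\subset V$), not to one-parameter subgroups of $\GL(n)^\ell$. Your proposed ``main obstacle''---computing Hilbert--Mumford weights for one-parameter subgroups of $\GL(n)^\ell$ and matching them with $\mathrm{pardeg}(E)/n-\mathrm{pardeg}(E')/n'$---therefore cannot succeed as stated: such a subgroup only refines the flags at the $p_i$ and never produces a subbundle $E'$, so there is no parabolic slope difference to compare with.

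The paper repairs this by lifting back to $P\times Q^\ell\times\prod_i{\ca O}_{-\sqrt{-1}\delta_i}$ with the full group $S(\GL(p)\times\GL(n)^\ell)$. Since $(\alpha,\beta,-\sqrt{-1}\delta_i)\in(\mu_p\times\mu_{n,\ell})^{-1}(0)$, it is semistable for the full group with closed orbit. One then quotients first by $S(\GL(n)^\ell)$, landing in $P\times\prod_i Fl_i(V)$, and uses the general fact that semistability for a product group descends to semistability for the remaining factor on the partial quotient. The entire content of the proof is then the polarization computation: one must check that the line bundle induced on $P\times\prod_i Fl_i(V)$ via the maps $\psi_i$ (using the explicit pull-back $\psi_i^*L_j\cong L_n\boxtimes L_j$ you mention) is precisely the standard parabolic polarization ${\ca O}_P(1)^\rho\boxtimes(\boxtimes_i\otimes_j L_{n-n_{i,0}-\cdots-n_{i,j}}^{\delta_i^j-\delta_i^{j+1}})$ for $\rho=k-g+1/2$. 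Only after this matching does the residual $\mathrm{SL}(p)$-semistability become the parabolic semistability criterion. Your outline should be reorganized along these lines; the polarization check is the real work, and it is a line-bundle computation on $P\times\prod_i Fl_i(V)$, not a Hilbert--Mumford argument for $\GL(n)^\ell$.
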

 
\begin{proof}
Let us first suppose that $(\alpha, \beta)$ is a stable element.
Let us suppose that $\mu'_{n,\ell}(\alpha,\beta) \,=\, 
\sqrt{-1}( \delta_1,\cdots, \delta_\ell)$, with 
$\sqrt{-1}\delta_j$ belonging to a 
coadjoint 
orbit ${\ca O}_{\sqrt{-1}\delta_i}$. Then
$$(\alpha,\beta,-\sqrt{-1}\delta_i)\in (\mu_p\times \mu'_{n,\ell})^{-1} (0) $$
and so is semistable for the action of ${\rm S(GL}(p)\times \GL(n)^\ell)$; moreover, its orbit is closed.
We 
can then quotient by the action of $S(\GL(n)^{\ell})$; the result is still 
${\rm SL}(p)$-semistable.

Before declaring that we are done, we must check that the 
polarizations match on both sides of $\times_{i=1}^{\ell} \psi_i$.
Recall that $Q\,\supset \,{\rm Gr}_n(V\oplus \bbc^n) $; 
the restriction of ${\mathcal O}_Q(1)$ to ${\rm Gr}_n(V\oplus \bbc^n) $ is its standard positive line bundle (the dual of the top exterior power 
of the tautological bundle), we denote it again by ${\ca 
O}(1)_Q$. The 
map $\psi_i$, as we saw, pulls back $ \bigotimes_{k} {L}_{i,k}^{\alpha_{i,k}}$ to the line bundle $({\ca O}(1)_Q^{\sum_k\alpha_{i,k}})\otimes 
(\otimes_{i,k} L_{i,k}^{\otimes \alpha_{i,k}})$. 
Let $\delta_i^j, j=0,\cdots , k_i+1$, be the eigenvalue corresponding to 
the block of size $n_{i,j}$ in $\delta_i$. Note that $\delta_i^0 = 1/2, \delta_i^{k_i+1} = -1/2$. The standard choice of polarization on $P\times (\prod_i Fl_i(V))$ for parabolic bundles is the bundle 
$${\ca O}_P(1)^\rho\boxtimes (\boxtimes_i (\otimes_{j=0}^{k_i}
L_{n-n_{i,0}-n_{i,1}-\ldots 
-n_{i,j}}^{(\delta_i^j-\delta_{i}^{j+1})})),$$
for $\rho=k-g+1/2$. (In \cite{Bh1, MS}, one has $\rho=k-g$; however, one can check that shifting the parabolic weights from $[0,1]$ to $[-1/2, 1/2]$ requires the change to $\rho=k-g+1/2$.) Now pull this back to 
$Z\times \prod_i{\ca O}_{\sqrt{-1}\delta_i}\subset P\times \prod_i (Q\times {\ca O}_{\sqrt{-1}\delta_i})$. The result is 

 $${\ca O}_P(1)^\rho\boxtimes (\boxtimes_i {\ca O}_Q(1))\boxtimes 
(\boxtimes_i(\otimes_{j=0}^{k_i}
L_{n-n_{i,0}-n_{i,1}-\ldots 
-n_{i,j}}^{(\delta_i^j-\delta_{i}^{j+1})}))\, .$$
The last term is the standard polarization on the coadjoint orbit $\prod_i{\ca O}_{-\sqrt{-1}\delta_i} $; one has the correct line bundles for the $Q$ factors, and the line bundles on $P$ match also.

As our element, after projection, lies in $\mu_p^{-1}(0)$, it satisfies the standard parabolic semistability criterion;  as it has a closed orbit, it will in fact be parabolic polystable.

Now suppose that the element $(\alpha, \beta)$  is semi-stable, but not stable. In 
the latter case one finds that it is a semi stable sum, and that the parabolic bundle it defines is polystable. This is a consequence 
of the orbit being closed. Indeed, let $(\alpha, \eta_{i,j})$ represent 
a semistable parabolic bundle $E$, with $W\subset V$ representing a 
destabilizing subbundle $E'$. One then has a subspace $W^\perp \subset 
V^*$; choose a complementary subspace $U$. At each point of $X$, the 
element $\alpha$ is represented by a product $w_1\wedge\cdots\wedge 
w_k\wedge (w_{k+1} + u_{k+1})\wedge\cdots\wedge(w_n+u_n)$, $w_i\in 
W^\perp, u_i\in U$. One can act by $\bbc^*\subset Sl(V)$ so that projectively, the 
limit element is $w_1\wedge\cdots\wedge w_k\wedge
u_{k+1}\wedge\cdots\wedge u_n$. For the bundle, this amounts to 
rescaling the extension class of $E'\longrightarrow E\longrightarrow 
E/E'$ to zero. Similar considerations hold for the $\eta_{i,j}$, so that 
the limit object is a sum of parabolic bundles. In other words, if an 
extension class (in the sense of parabolic bundles) is nontrivial, 
then the orbit is not closed. Once again, one has that our element, after projection, lies in $\mu_p^{-1}(0)$,  with a closed orbit,  is polystable, and so in particular satisfies the standard parabolic stability criterion.\end{proof}

\begin{prop} The map $C$ is bijective. \end{prop}

\begin{proof} It suffices to consider this over each $(\mu'_{n,\ell})^{-1}(\delta_i)$. For surjectivity, note first that we have seen in \ref{pp} that an element $(\alpha, \beta)$ in $\mu_p^{-1}(0)$ defines a semi-stable parabolic structure; its weights are given by the eigenvalues of its image under $(\mu'_{n,\ell})^{-1}$. Let us first concentrate on the 
case when all the $s_i$ are zero, so that there are no weights equal to 
$1/2$. This implies that the spread of the weights is less than one 
and then the Mehta-Seshadri theorem for parabolic bundles (see \cite{MS}, \cite{Bi}, \cite{Po}) 
give us the flat connection, with the right residues at the puncture. 

Now suppose that there is a subspace $F_0$ of $E_{p_i}$ with weight $1/2$. 
We take a Hecke transform $\widetilde E$ as the subsheaf of 
sections of $E(p)$ whose polar part lies in $F_0$. 
This Hecke transforms does not affect stability. 
Indeed, let $1/2 = \delta'_{i,0}, \delta'_{i,1}, \cdots ,\delta'_{i,r}$ be the distinct elements among the eigenvalues 
$\delta_{i,1}, \cdots, \delta_{i,1}$ of $\delta_i$ with $1/2> \delta'_{i,1} > \cdots > \delta'_{i,r}$. Let
$$0 \subset F_0\subset F_1\subset \cdots\subset F_r= E_p$$ the corresponding flag with weights 
 $$1/2 > \delta'_1 > \cdots > \delta'_r\, .$$ 
 Instead of this flag one has the flag 
 $$ 0 \subset F_1/F_0\subset \cdots\subset F_r/F_0 \subset \widetilde{E}_p\, .$$ 
with weights 
 $$ \delta'_1 > \cdots > \delta'_r \geq -1/2\, .$$ 
One has that the parabolic degree of $E$ equals the parabolic 
degree of $\widetilde{E}$ 
(with the shifted weights) and the same holds for subbundles of $E$. Hence $E$ is parabolic semistable if and only if $\widetilde E$ is, and the same holds for polystability. Now again the spread of the weights is less 
than one, and one can use the result on parabolic bundles to produce a 
flat connection for $\widetilde E$; shifting back (in the space of flat 
connections, taking a Schlesinger transformation) gives us the 
flat connection we want on $E$, with the right residues.

This means that on the level of quotients by $U(n)^\ell$, the map $C$ is surjective. The normal form lemma \ref{normalform} tells us that once we have fixed the bundle, and the (orbits of) the values $\delta_i$ of the moment map, the intersection of the inverse image $\mu_{n,\ell}^{-1}(\delta_i)$ with the elements in $\cagm$ corresponding to the fixed bundle lies a single $U(n)^\ell$-orbit; the same holds for $GM_{n,\delta_0}$.   
The bijection then follows from the fact that the map $C$ is $U(n)^\ell$ equivariant, and that the normal form lemma \ref{normalform} tells us that for an element $a$ in $GM_{n,\delta_0}$, both  $a$ and $C(a)$ have the same stabilizers in $U(n)^\ell$.\end{proof}

As we have a continuous, indeed smooth,  bijection from a compact space, we have, in sum:

\begin{thm}
There is a smooth homeomorphism: 
$$C:GM_{n,\delta_0} \rightarrow \cagm$$
that commutes with the natural actions of $U(n)^\ell$ and indeed of $Gl(n,\bbc)^\ell$ on both terms. The $U(n)$-actions are Hamiltonian, and commute with the moment maps:
\begin{equation} \begin{matrix} GM_{n,\delta_0}&{\buildrel{C}\over{\rightarrow}}&\cagm\\ \\ \downarrow \mu_{n,\ell} &&\downarrow \mu'_{n,\ell} \\ \\ \gu(n)^\ell&=& \gu(n)^\ell\end{matrix}\end{equation}
The symplectic reductions on both sides at $\delta_1,\cdots ,\delta_n$ are the  moduli spaces $PM_{\underline \delta}$ of parabolic bundles with weights given by the eigenvalues $\delta_{i,j}$ of $\delta_{i}$.\end{thm}
 
\section{Generalized parabolic bundles}\label{sec5}

\subsection{Bundles on nodal curves and generalized parabolic structures} Given a bundle $E$ on $X$ with framings at $\ell$ pairs of points $(p_i, q_i)$, there is a natural way of associating to it a bundle on the singular curve $\hat X$ given by identifying the points $p_i,q_i$ of each pair: the framings allow us to identify $E_{p_i}$ with $E_{q_i}$. Alternately, this identification gives, via its graph, a plane in $E_{p_i}\oplus E_{q_i}$, and more generally, one could hope for a way of associating to an element of $\cagm$ a pair consisting of a bundle $E$ and a vector $\vec g$ of $n$-planes $g_i$ in $E_{p_i}\oplus E_{q_i}$.
 
 Pointwise, this procedure is fairly clear: let us consider a bundle $E$ 
with vectors $\vec g^p = (g^p_1,\cdots ,g^p_\ell), \vec g^q = 
(g^q_1,\cdots ,g^q_\ell)$ with $g^p_i\in {\rm Gr}_n(E_{p_i}\oplus 
\bbc^n), g^q_i\,\in\, 
{\rm Gr}_n(E_{q_i}\oplus \bbc^n)$. Let $$R^p_i\,:\, E_{p_i}\oplus 
\bbc^n \,\longrightarrow\,\bbc^n ~\text{~and~}~ R^q_i\,:\,E_{q_i}
\oplus \bbc^n \,\longrightarrow \,\bbc^n$$ be the projections 
and $R_i= R^p_i \oplus R^q_i$. We suppose that:
 \begin{align}
 R^p_i(g^p_i) + R^q_i(g^q_i) \,=\,& \bbc^n\label{condition1}\\
 g^p_i\cap\bbc^n\cap g^q_i \,=\,& 0\, .\label{condition2}
 \end{align}
Let 
$$ g_i = \{(e_p,e_q)\in E_{p_i}\oplus E_{q_i}\, \mid\, \exists ~ 
b\in\bbc^n \ {\rm with} \ (e_p,b)\in g^p_i, (e_q,b)\in g^q_i \}.$$ 
In other words,
$$g_i= R_i((g^p_i \oplus g^q_i)\cap (E_{p_i}\oplus E_{q_i} \oplus 
\Delta))\, , $$
where $\Delta$ denotes the diagonal in $\bbc^n \oplus \bbc^n$. Note that
\eqref{condition1} tells us that $g^p_i \oplus g^q_i$ and $E_{p_i}\oplus 
E_{q_i} \oplus \Delta$ span the full space $E_{p_i}\oplus E_{q_i} \oplus \bbc^n\oplus \bbc^n$;
the intersection of these two spaces is then $n$-dimensional. The 
projection $g_j$ of this intersection to $E_{p_i}\oplus E_{q_i}$ is also $n$-dimensional, as it intersects the kernel of the projection map trivially, by \eqref{condition2}.
We note that $g_i$ is invariant under the diagonal action of 
${\rm GL}(n)$ on the $\bbc^n$s associated to $p_i, q_i$. 

For example, if $g^p_i \,\in\, {\rm Gr}_n(E_{p_i}\oplus \bbc^n)$ is the 
graph of $f^p_i \,\in \,Hom(E_{p_i}, \bbc^n)$ and $g^q_i \,\in\, 
Gr_n(E_{q_i}\oplus \bbc^n)$ is the graph of $f^q_i \in Hom(\bbc^n, 
E_{q_i})$, then $g_i\in {\rm Gr}_n(E_{p_i}\oplus E_{q_i})$ is the graph 
of $f^q_i\circ f^p_i \,\in\, {\rm Hom}(E_{p_i},E_{q_i})$.
 
Algebraically, in terms of the data which defines $(E,\vec g^p, \vec 
g^q)$ as in Section 2, one has the natural map $\Lambda^n 
(V^*\oplus 
\bbc^n) \times \Lambda^n (V^*\oplus \bbc^n) \longrightarrow \Lambda^{2n} 
(V^*\oplus \bbc^n)$. Since 
$(\Lambda^n V^*\otimes \Lambda^n\bbc^n)$ is a direct summand of 
$\Lambda^{2n} (V^*\oplus \bbc^n)$, we have a projection map 
$\Lambda^{2n} (V^*\oplus \bbc^n)) \longrightarrow \Lambda^n V^*\otimes 
\Lambda^n \bbc^n$. 
The restriction of the composite of these two maps gives the map described above. 
If $(\alpha, \beta^p_1, \cdots, \beta^p_{\ell}, \beta^q_1,\cdots,\beta^q_{\ell})$ is the algebraic data encoding $(E,\vec g^p,\vec g^q)$, then $(\alpha,\beta), 
\beta=(\beta_1, \cdots, \beta_{\ell})$ encodes $(E,\vec g)$, where 
$$\beta_j \,= \,i(v)(\beta^p_j\wedge\beta^q_j) \in \Lambda^n(V^*)\, .$$
Here $v$ is the (co)volume element in $\Lambda^n(\bbc^n)$. The constraint $R^p_i(g^p_i) + R^q_i(g^q_i) = \bbc^n$ ensures that $\beta^p_i\wedge\beta^q_i$ is non-zero, and the constraint $g^p_i\cap\bbc^n\cap g^q_i = 0$ then tells us that $\beta_i$ does not vanish.

On the level of moduli spaces, there is a space, constructed in 
\cite{Bh2}, classifying pairs $(E, \vec g)$. We briefly recall the 
construction. (In fact, the construction in \cite{Bh2} is more general, 
but we restrict our attention to this particular case.)
As above, fix disjoint divisors $D_i=p_i+q_i, i=1,\cdots,\ell$, where 
$(p_i, q_i)$ is a pair of distinct points of $X$. Let $E$ denote a vector bundle of rank $n$, degree $\delta_0$ on $X$.
Let $g_i \subset E_{p_i}\oplus E_{q_i}$ be an $n$-dimensional subspace 
and $\vec g= (g_1,\cdots , g_{\ell})$. Such a pair $(E, \vec g)$ is called a 
generalized parabolic bundle (with parabolic structure over the divisors 
$D_i$); let us abbreviate to GPB. 

There is a notion of (semi)stability of GPBs, analogous to that of 
parabolic bundles: as for parabolic bundles, there are weights, and the 
relevant ones here are $\alpha_1 = 1/2, \alpha_2 = -1/2$. For a subbundle $E'$ of $E$ of rank $n'$, degree $\delta_0'$, we set the parabolic degree to be 
\begin{align}{\rm pardeg}(E') =& \delta_0' + \sum_{i=1}^{\ell} 
\alpha_1\dim((E'_{p_i}\oplus E'_{q_i})\cap g_i) + \alpha_2 \dim(E'_{p_i}\oplus E'_{q_i}/((E'_{p_i}\oplus E'_{q_i})\cap g_i))\nonumber\\
 =& \delta_0' + \sum_{i=1}^{\ell} (\dim((E'_{p_i}\oplus E'_{q_i})\cap 
g_i) -n')\, .
 \end{align}
 The definition of semistability is then the usual one, using the parabolic degree to define slopes.

Let $R, \ca{E}, \ca{E}_{p_i}, \ca{E}_{q_i}, \tilde{R}$ 
be as in Section 2.2 (but with half the points $p$ relabelled as $q$).
For $k$ sufficiently large, $R$ contains the underlying bundles of all semistable GPBs. 
Let
$${\rm Gr}_n(\ca{E}_{p_i}\oplus \ca{E}_{q_i}) \longrightarrow R $$ 
be the Grassmannian bundle 
whose fibers are isomorphic to the Grassmannian of $n$-planes in the sum $\ca{E}_{p_i}\oplus \ca{E}_{q_i}$.
Let ${\rm Gr}_n(\ca{E})$ be the fiber product of
${\rm Gr}_n(\ca{E}_{p_i}\oplus \ca{E}_{q_i}), i=1,\cdots, \ell$, 
over $R$. We denote the total space of ${\rm Gr}_n(\ca{E})$ 
by $R^{gpar}$. A point of $R^{gpar}$ corresponds to a 
GPB $(E,\vec g)$. The moduli space ${\ca M}^{gpar}(n,\delta_0)$ is 
a GIT-quotient of $R^{gpar}$ by ${\rm SL}(p)$.

\subsection{Symplectic version} As in Section 3, one can 
build a symplectic version of the moduli space of GPBs. The starting 
point is again the space ${EM}_{n}$ of flat connections on the 
complement of the points $p_i, q_i$, framed at the punctures. We had a 
symplectic action of $U(n)^{2\ell}$ on ${EM}_{n}$, via the framings. 
One has $U(n)$ acting simultaneously on the framing at $p_i$ and on the 
Grassmannian ${\rm Gr}_n(\bbc^n\oplus \bbc^n)$, acting here on the first 
$\bbc^n$; similarly, one has an action of $U(n)$ on the framing at 
$q_i$, and on the Grassmannian ${\rm Gr}_n(\bbc^n\oplus \bbc^n)$, now 
acting on the second copy of $\bbc^n$. We take the symplectic quotient
$$M^{gpar}(n) = ({EM}_{n}\times Gr_n(\bbc^n\oplus 
\bbc^n)^\ell)/\!\!/(U(n)^{2\ell}\, .$$
This gives $\delta_{p_i}$ conjugate to $- \delta_{q_i}$, with eigenvalues in the 
interval $[-1/2, 1/2]$; when all the eigenvalues are in $(-1/2, 1/2)$, 
the elements of the Grassmannian are graphs of maps 
$\gamma_i:\bbc^n\longrightarrow \bbc^n$, which conjugate $\delta_{p_i}$ 
to   $\delta_{q_i}$.
 
As in Section 4, we can define a map
$$C:M^{gpar}(n)\longrightarrow {\ca M}^{gpar}(n,0)\, ;$$
retracing the steps of Section 4, one should be able to show that this 
is an isomorphism, but we will leave this discussion for elsewhere. 

This map indicates what the Narasimhan-Seshadri correspondence should be 
for nodal curves. If $p_i, q_i$ in a desingularization $X$ of the curve are the pairs of points corresponding to 
the nodes, semistable vector 
bundles should correspond to singular unitary connections $\nabla$ on 
the punctured curve $ X^*$, with holonomies 
$\exp(2\pi\sqrt{-1}\delta_{p_i}), \exp(2\pi\sqrt{-1}\delta_{q_i})$, with 
$\delta_{p_i}= -\delta_{q_i}$ having eigenvalues in $(-1/2,1/2)$, and 
unitary isomorphisms between the eigenspaces of $\delta_{p_i}$ and those 
of $\delta_{q_i}$, with the corresponding eigenvalues summing to zero.

\subsection{Relations between $\cagm$ and ${\ca 
M}^{gpar}(n,\delta_0)$}
 
Let us now consider the moduli space $\cagm$ 
for the $2\ell$ marked points $p_i, q_i$; as above, we write an element 
of this moduli space as a triple $(E, \vec g^p, \vec g^q)$. The group 
${\rm GL}(n)^\ell$ acts on $\cagm$, with the $i$-th copy of ${\rm 
GL}(n)$ acting diagonally on the $\bbc^n$s associated to $p_i, q_i$. We 
consider the quotient $\cagm/\!\!/{\rm GL}(n)^\ell$, with the natural 
polarization on the product of Grassmannians
${\rm Gr}_n(V\oplus \bbc^n)$.

\begin{lem}\label{le5.1}
The condition \eqref{condition2} above, $g^p_i\cap \bbc^n\cap g^q_i = 
0$, is a consequence of semistability for the action of ${\rm 
GL}(n)^\ell$.
\end{lem}

\begin{proof}
If $g^p_i\cap \bbc^n\cap g^q_i $ is non empty, let $e_1$ be a non-zero element of the intersection, and complete to a basis $e_i$ of $\bbc^n$. the elements $\beta_{p_i},\beta_{q_i}$ of $\Lambda^n(V^*\oplus \bbc^n)$ describing the Grassmannian framing are of the form 
$$\beta_{p_i} \,=\, b_1\wedge 
(c_2e^*_{2}+b_{2})\wedge\cdots\wedge(c_ne^*_n+b_n),\, \beta_{q_i} \,=\, 
b'_1\wedge 
(c'_2e^*_{2}+b'_{2})\wedge\cdots\wedge(c'_ne^*_n+b'_n)\, .$$
Here the $c_j, c'_j$ are constants, and $b_j, b'_j$ are elements of 
$V^*$. One can then take a $1$-parameter subgroup of $S({\rm 
GL}(p)\times {\rm GL}(n)^\ell)$ taking $\alpha, \beta_{p_i}, 
\beta_{q_i}$ to zero, essentially by putting 
positive weight on $e_1^*$, and negative weight everywhere else. 
\end{proof}

Lemma \ref{le5.1} implies that on the semi-stable locus, the projection 
$$R_i((g^p_i 
\oplus g^q_i)\cap (E_{p_i}\oplus E_{q_i} \oplus \Delta))$$ will always 
be at least $n$-dimensional, and if one defines the variety
\begin{align} \label{spaceZ} Z = \{ ( (E', \vec g^p, \vec g^q), (E,\vec 
g))\,\in\, 
&(\cagm/\!\!/{\rm GL}(n)^\ell) \times {\ca M}^{gpar}(n,\delta_0)\,\mid 
\nonumber 
\\ &E=E', g_i\subset R_i((g^p_i \oplus g^q_i)\cap (E_{p_i}\oplus E_{q_i} 
\oplus \Delta))\}\, ,\end{align}
one obtains a closed subvariety of the product.

Define 
$$(\cagm)_{gen}\,=\, \{(E,\vec g^p,\vec g^q) \in \cagm \,\mid\, 
R^p_i(g^p_i) + R^q_i(g^q_i) 
= \bbc^n \}\, .$$
One has that the variety $Z$ over   the quotient of 
the semi-stable locus of $(\cagm)_{gen}$ by $S({\rm GL}(n)^{\ell})$, is the graph of a 
morphism $\phi$, provided that the image is semistable as a generalized parabolic bundle. 
Note that indeed, this quotient has the correct dimension $n^2(g-1)+1+n^2 \ell =$ dim ${\ca 
M}^{gpar}(n,\delta_0)$.

Specializing a bit further, let us consider the locus 
$$(\cagm)_{gen,0}\,=\, \{(E,\vec g^p,\vec g^q) \in \cagm \,\mid\, 
s_{p_i}= s_{q_i}= t_{p_i}= t_{q_i}=0\}$$
of framed bundles within $\cagm$.

\begin{prop} Let $(E,\vec g)$ be an element of ${\ca 
M}^{gpar}(n,\delta_0)$ for which the planes $g_i$ are the graphs of 
isomorphisms. If $(E,\vec g)$ is stable, then there is a unique 
element
$(E,\vec g^p,\vec g^q)$ of $(\cagm)_{gen,0}/\!\!/({\rm 
GL}(n)^\ell)\,\subset\, \cagm/\!\!/({\rm GL}(n)^\ell)$ corresponding to 
it in the variety $Z$, so that $\phi(E,\vec g^p,\vec g^q) \,=\, 
(E,\vec 
g)$. 

If $(E,\vec g)$ is only semistable, then the same holds, provided that $n$ is odd, or that
$\delta_0 \geq\, (\ell-1)n/2.$
\end{prop}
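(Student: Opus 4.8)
The plan is to construct the element $(E,\vec g^p,\vec g^q)$ explicitly from $(E,\vec g)$ and then check that it is semistable as a Grassmannian framed bundle, lies in the generic locus, and is unique up to the $\mathrm{GL}(n)^\ell$-action. Since each $g_i\subset E_{p_i}\oplus E_{q_i}$ is the graph of an isomorphism $\psi_i\colon E_{p_i}\to E_{q_i}$, I would split this factorization through $\bbc^n$: choose any isomorphism $f^p_i\colon E_{p_i}\to\bbc^n$, set $g^p_i$ to be its graph (so $s_{p_i}=t_{p_i}=0$), and let $g^q_i$ be the graph of $\psi_i\circ (f^p_i)^{-1}\colon\bbc^n\to E_{q_i}$ (so $s_{q_i}=t_{q_i}=0$). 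The formula recalled just before Lemma \ref{le5.1}, namely that graphs compose to the graph of the composite, guarantees that $\phi(E,\vec g^p,\vec g^q)=(E,\vec g)$. The ambiguity in choosing $f^p_i$ is precisely the $\mathrm{GL}(n)^\ell$-action, which gives uniqueness in the quotient $(\cagm)_{gen,0}/\!\!/(\mathrm{GL}(n)^\ell)$ once semistability is established.

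The substance of the proof is the semistability verification: I must show that $(E,\vec g^p,\vec g^q)$ satisfies the conditions of Theorem \ref{s2s} knowing that $(E,\vec g)$ is (semi)stable as a GPB. The natural route is to translate the GPB (semi)stability inequality for a subbundle $E'$ into the Grassmannian framed inequality for the \emph{same} $E'$, using that in our situation $s'_{p_i}=t'_{p_i}=0$ (since $g^p_i$ is a genuine framing) while the numbers $s'_{q_i},t'_{q_i}$ at $q_i$ are read off from how $E'_{q_i}$ meets the graph $g^q_i$, and are controlled by $\dim((E'_{p_i}\oplus E'_{q_i})\cap g_i)$ appearing in the GPB parabolic degree. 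The key computation is that the term $\sum_i[\,(n'-s'_i)/n' - (n'-s'_i-t'_i+t_i)/n\,]$ summed over the $2\ell$ points $\{p_i,q_i\}$ reduces, after substituting the framing data, to the GPB quantity $\sum_i(\dim((E'_{p_i}\oplus E'_{q_i})\cap g_i)-n')$ up to the normalization built into $\delta_0$. Because the weights are $\pm 1/2$ on both sides, the equality case and the secondary inequality involving $S^2$ should match the GPB equality case directly; and the global constraints $\sum s_i\le \ell n/2-\delta_0$, $\sum t_i\le \ell n/2+\delta_0$ become automatic here since at $p_i$ all of $s,t$ vanish and at $q_i$ they also vanish in $(\cagm)_{gen,0}$.

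The main obstacle is the semistable (non-stable) case, and this is exactly where the hypothesis $\delta_0\le 2\ell+(n/2)$ enters. When $(E,\vec g)$ is strictly semistable, there may be a subbundle $E'$ achieving equality in the GPB inequality, and I must verify both that the corresponding framed inequality \eqref{3} holds with equality and that the secondary condition \eqref{4} (the $S^2$ term) is then satisfied, rather than violated. The degree bound should be precisely what forces the bracket $[-\delta_0/n+\sum_i((n'-s'_i-t'_i+t_i)/n)-1/2]$ to have the correct sign when $S^1(E')=0$; I would trace through the contribution of the $2\ell$ marked points to $\sum_i(n'-s'_i-t'_i+t_i)/n$, bound it below using $s'_i,t'_i\le n'$, and check that $\delta_0\le 2\ell+(n/2)$ is exactly the inequality guaranteeing nonnegativity. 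I expect the generic positivity of $S^1$ for all \emph{other} subbundles to follow cleanly, so the whole difficulty concentrates in this sign check at the equality locus; once it is done, Theorem \ref{s2s} yields Grassmannian framed semistability and the proposition follows.
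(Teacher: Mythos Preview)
Your construction of $(E,\vec g^p,\vec g^q)$ and the uniqueness argument match the paper's. The gap is in the stability verification. You assert that the quantities $s'_{q_i},t'_{q_i}$ are ``read off from how $E'_{q_i}$ meets the graph $g^q_i$'' and are ``controlled by $\dim((E'_{p_i}\oplus E'_{q_i})\cap g_i)$''. This is incorrect: since each $g^p_i$ and $g^q_i$ is the graph of an \emph{isomorphism}, one has $s'_{p_i}=t'_{p_i}=s'_{q_i}=t'_{q_i}=0$ for \emph{every} subbundle $E'$ (the projection $\Pi(g^q_i)$ is all of $E_{q_i}$, and $g^q_i\cap E_{q_i}=0$). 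The Grassmannian framed inequality \eqref{3} for $E'$ therefore reads simply
\[
0\ (\leq)\ \frac{\delta_0}{n}-\frac{\delta'_0}{n'}+\frac{2\ell(n-n')}{n},
\]
and carries no memory whatsoever of the GPB intersection number $\dim((E'_{p_i}\oplus E'_{q_i})\cap g_i)$. There is no ``key computation'' equating the two conditions: the framed inequality is a priori \emph{weaker} than the GPB inequality, and the content of the proof is precisely to show that GPB (semi)stability nonetheless implies it.

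The paper does this via the elementary dimension bound
\[
\dim\bigl((E'_{p_i}\oplus E'_{q_i})\cap g_i\bigr)\ \geq\ \max(0,\,2n'-n),
\]
which holds because $g_i$ is an $n$-plane and $E'_{p_i}\oplus E'_{q_i}$ a $2n'$-plane inside the $2n$-dimensional $E_{p_i}\oplus E_{q_i}$. Splitting into the cases $2n'\geq n$ and $2n'\leq n$, GPB stability then forces $0<\frac{\delta_0}{n}-\frac{\delta'_0}{n'}+\frac{\ell(n-n')}{n'}$ (respectively $+\,\ell$), and in each case one checks this dominates the framed quantity $\frac{2\ell(n-n')}{n}$. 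Your proposed route, a direct translation via the $s',t'$ data, cannot work because that data vanishes identically on $(\cagm)_{gen,0}$. The semistable case is handled by the same dimension bound with non-strict inequalities, the hypothesis $\delta_0\le 2\ell+n/2$ then being what is needed for the secondary condition \eqref{4}; it is not a matter of matching equality loci between the two stability notions.
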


\begin{proof}
Let $g_i$ be the $i$-th element of $\vec g$; it 
corresponds to a homomorphism $\rho_i: E_{p_i}\longrightarrow E_{q_i}$. 
There are 
elements $ g_i^p, g_i^q$ corresponding to linear maps $\rho_{p_i}: 
E_{p_i}\longrightarrow \bbc^n$, $\rho_{q_i}: E_{q_i}\longrightarrow 
\bbc^n$ with $\rho_i = \rho_{q_i}^{-1}\circ \rho_{p_i}$; these elements 
are unique up to the action of ${\rm GL}(n)$. 

If the element $(E,\vec g)$ is stable, one has, for a subbundle of rank 
$n'$,
$$0 \ <\ \frac{\delta_0}{n} + \frac {-\delta_0' + \sum_{i=1}^{\ell} 
(n'- \dim((E'_{p_i}\oplus E'_{q_i})\cap g_i) )}{n'}\, .$$
The dimension of the intersection $\dim((E'_{p_i}\oplus E'_{q_i})\cap g_i)$ is bounded below by $\max(0,2n'-n)$, giving 
$$0 \ <\ \frac{\delta_0}{n} + \frac {-\delta_0' + \sum_{i=1}^{\ell} (n'-\max(0,2n'-n) )}{n'}$$
and so, for $2n'\geq n$, 
$$0 \ < \ \frac{\delta_0}{n} - \frac {\delta_0'}{n'} + \frac{\ell (n-n')}{n'}$$
and for $2n'\leq n$,
$$
0 \ <\ \frac{\delta_0}{n} - \frac {\delta_0'}{n'} + \ell \, .
$$
Both imply
$$0 < \frac{\delta_0}{n} - \frac {\delta_0'}{n'} + \frac{2\ell (n-n')}{n}$$
This is the stability condition \ref{3}  for our Grassmann framings, when 
$s_i, t_i= 0$. 
 
For the semistability case, one has the same inequalities, but not 
strict; on the other hand, for equality to hold throughout, one needs $n'= n/2$, and the additional stability condition  \ref{4} is then guaranteed  by $\delta_0 \geq\, (\ell-1)n/2.$
\end{proof}
 
The moduli spaces $\cagm, {\ca M}^{gpar}(n,\delta_0)$ are compact; this then gives:

\begin{thm}
The correspondence $Z$ of \ref{spaceZ} defines a birational map $\phi$ between 
the two spaces $\cagm/\!\!/({\rm GL}(n))^\ell$ and $ {\ca M}^{gpar}(n,\delta_0)$.
\end{thm}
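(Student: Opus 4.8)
The plan is to exhibit dense open subsets of the two moduli spaces on which $\phi$ restricts to a bijection, and then to invoke the standard fact that a dominant morphism between irreducible varieties of equal dimension that is generically injective is, in characteristic zero, birational. First I would pin down the relevant open loci. On the framed side, the conditions $s_{p_i}=s_{q_i}=t_{p_i}=t_{q_i}=0$ cutting out $(\cagm)_{gen,0}$ are open, since the dimensions of the relevant intersections are upper semicontinuous, and a generic pair $(g^p_i,g^q_i)$ is the graph of a trivialization; hence $(\cagm)_{gen,0}$ is dense in $\cagm$, and its image $(\cagm)_{gen,0}/\!\!/\GL(n)^\ell$ is dense open in $\cagm/\!\!/\GL(n)^\ell$. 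On the GPB side, the locus $\mathcal{U}$ of stable GPBs whose planes $g_i$ are graphs of isomorphisms $E_{p_i}\to E_{q_i}$ is likewise open and dense in ${\ca M}^{gpar}(n,\delta_0)$. I would then recall that the correspondence variety $Z$ defines $\phi$ as a morphism over $(\cagm)_{gen}$, with Lemma \ref{le5.1} supplying condition \eqref{condition2} automatically on the semistable locus and condition \eqref{condition1} being exactly the genericity defining $(\cagm)_{gen}$; this morphism carries $(\cagm)_{gen,0}/\!\!/\GL(n)^\ell$ into $\mathcal{U}$.

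Next I would assemble the three ingredients for the birationality criterion. Irreducibility of both spaces follows from that of the moduli of bundles on the irreducible curve $X$ (for degree in the stable range) together with irreducibility of the Grassmannian and flag fibrations over $R$. Equality of dimensions is the computation already recorded in the text, namely that the quotient has dimension $n^2(g-1)+1+n^2\ell=\dim{\ca M}^{gpar}(n,\delta_0)$. Generic injectivity is precisely the content of the preceding Proposition: for a stable GPB with $g_i$ the graph of an isomorphism $\rho_i$, the factorization $\rho_i=\rho_{q_i}^{-1}\circ\rho_{p_i}$ determines $(g^p_i,g^q_i)$ uniquely up to the residual $S(\GL(n)^\ell)$-action (the $\mathbb{C}^*$ having already been quotiented in building $\cagm$), so $\phi$ is injective on $(\cagm)_{gen,0}/\!\!/\GL(n)^\ell$ with image $\mathcal{U}$.

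With these in hand I would conclude that $\phi$ is a dominant morphism of irreducible varieties of equal dimension, injective on a dense open set, hence birational, the inverse rational map being given explicitly by the factorization $\rho_i=\rho_{q_i}^{-1}\circ\rho_{p_i}$. The main obstacle is the passage from the pointwise bijection of the Proposition to a statement about morphisms: one must verify that $\phi$ is a genuine morphism on the whole generic locus, which the algebraic description $\beta_j=i(v)(\beta^p_j\wedge\beta^q_j)$ guarantees, and that preservation of semistability holds in both directions (as in the Proposition, under the bound $\delta_0\le 2\ell+n/2$ in the semistable case). The delicate point is thus controlling semistability across the correspondence, but since only generic injectivity is needed for birationality, one may work entirely over the stable graph-of-isomorphism locus $\mathcal{U}$, which sidesteps the boundary behaviour and makes the argument go through.
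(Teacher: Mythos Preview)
Your proposal is correct and follows essentially the same route as the paper: the paper's own proof is the single sentence ``The moduli spaces $\cagm$, ${\ca M}^{gpar}(n,\delta_0)$ are compact; this then gives'' the theorem, relying on the correspondence variety $Z$, the dimension count, and the preceding Proposition for generic bijectivity, exactly the ingredients you assemble. You have simply made explicit the standard birationality criterion (irreducibility, equal dimension, generic injectivity in characteristic zero) that the paper leaves implicit, and correctly flagged that working over the stable graph-of-isomorphism locus avoids the semistability-preservation subtleties at the boundary.
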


\subsection {The symplectic point of view} Let us now consider the space $GM_{n,\delta_0}$ for punctures $p_1,..,p_\ell, q_1,..,q_\ell$, and fix the degree $\delta_0$   to be zero.
From a symplectic point of view, one has the action of the 
diagonal $U(n)^\ell$ in $(U(n)\times U(n))^\ell$ on 
$GM_{n,\delta_0}$, with the $i$-the copy of $U(n)$ acting simultaneously at $p_i$ and $q_i$. Taking the symplectic quotient, the values of 
the moment maps at the punctures $p_i, q_i$ take 
opposing values; one then quotients by the diagonal action of 
$U(n)$ for each pair.

This quotient, over the generic set $S$ corresponding to $s_i= t_i= 0$, so that the elements  $g_{p_i},g_{q_i}$ of the Grassmannian are graphs of invertible maps $\gamma_{p_i}:E_{p_i}\rightarrow \bbc^n$, $\gamma_{q_i}:E_{p_i}\rightarrow \bbc^n$, gives us in a natural way an element of 
$M^{gpar}(n,0)$, simply by taking  the graphs of $\gamma^{-1}_{q_i}\circ\gamma_{p_i}$. The moment map constraint gives us 
$$ (\bbi-\gamma_{p_i}\gamma_{p_i}^*)(\bbi+\gamma_{p_i} \gamma_{p_i}^*)^{-1}=  -(\bbi-\gamma_{q_i}\gamma_{q_i}^*)(\bbi+\gamma_{q_i} \gamma_{q_i}^*)^{-1},$$
from which one deduces that  $\gamma^{-1}_{q_i}\circ\gamma_{p_i}$  conjugates $\delta_{p_i}$ to $-\delta_{q_i}$. 
 We thus have a map 
$$S /\!\!/U(n)^\ell \,\longrightarrow\, M^{gpar}(n)\, .$$ 

Going back to the construction of both spaces from $EM_{n}$, one has 
$$GM_{n,0}/\!\!/U(n)^\ell \,=\, EM_{n}\times 
({\rm Gr}_n(\bbc^n\oplus\bbc^n)\times 
{\rm Gr}_n(\bbc^n\oplus\bbc^n))^\ell/\!\!/(U(n)\times U(n)\times 
U(n))^\ell\, ,$$
where the first copy of $U(n)$ acts on the framing at $p_i$ and on the 
first $\bbc^n$ in the first copy of ${\rm Gr}_n(\bbc^n\oplus\bbc^n)$, 
the second copy of $U(n)$ acts on the framing at $q_i$ and on the first 
$\bbc^n$ in the second copy of ${\rm Gr}_n(\bbc^n\oplus\bbc^n)$, and the 
third copy of $U(n)$ acts on the second $\bbc^n$ in both copies of 
${\rm Gr}_n(\bbc^n\oplus\bbc^n)$. On the other hand,
$$M^{gpar}(n) \,=\, ({EM}_{n}\times {\rm Gr}_n(\bbc^n\oplus 
\bbc^n)^\ell)/\!\!/(U(n)^{\ell}\times U(n)^{\ell})\, ,$$
where the first copy of $U(n)$ acts on the framing at $p_i$ and on the 
first $\bbc^n$ in ${\rm Gr}_n(\bbc^n\oplus\bbc^n)$, and the second copy 
of $U(n)$ acts on the framing at $q_i$ and on the second $\bbc^n$ in 
${\rm Gr}_n(\bbc^n\oplus\bbc^n)$.

The relation between $S/\!\!/U(n)^\ell$ and $M^{gpar}(n)$ is 
thus mediated by the relation between 
$({\rm Gr}_n(\bbc^n\oplus\bbc^n)\times 
{\rm Gr}_n(\bbc^n\oplus\bbc^n))/\!\!/U(n)$ 
and ${\rm Gr}_n(\bbc^n\oplus\bbc^n)$. These two spaces are isomorphic 
over the 
open set consisting of graphs of isomorphisms $\bbc^n\longrightarrow \bbc^n$, 
but the quotient is not an isomorphism away from this.

\section{Examples}\label{sec6}

\subsection{Line bundles}

We now consider line bundles over a curve of arbitrary genus; 
the degree $\delta_0$ will live in a range 
$\{-[(\ell-1)/2],\cdots , [(\ell -1)/2]\}$. There are no 
conditions on subbundles, and one just has the constraints
$\sum_i s_i\leq \ell/2 -\delta_0, \sum_i t_i\leq \ell/2 +\delta_0$. (Thus, 
for $\ell= 1$, we have $t_1 = 0, s_1=0$.)

For the framed moduli, this gives us for $\ell=1$, the Jacobian. 

The other moduli spaces are fibered over the Jacobian. The fiber, over a 
line bundle $L$, is a quotient of $\prod_i{\mathbb P}(L_{p_i}\oplus 
\bbc)$ by 
$\bbc^*$. For $\ell =2$, the fiber is ${\mathbb P}^1$. For $\ell= 3$, 
there are different rational quotients depending on the degree 
$\delta_0$:
\begin{itemize}
\item{} For $\delta_0 = 1$, one has $s_i = 0$, and $\sum_i t_i \leq 2$. This then
gives ${\mathbb P}^2$ as a quotient. 
\item{} For $\delta_0= -1$,the roles of $t_i$ and 
$s_i$ are inverted, so that $t_i = 0$, and $\sum_i s_i \leq 2$. Again, the quotient is ${\mathbb P}^2$.
\item{} For $\delta_0 = 0$, 
one has that $\sum_is_i\leq 1$, $\sum_it_i\leq 1$, and one gets the quotient
${\mathbb P}^1\times{\mathbb P}^1$. \end{itemize}

{}From the symplectic point of view, one has elements of the moduli 
space $EM_1$ given by
elements $A_j\, ,B_j$, $j=1,\cdots ,g$, $C_i$, $i=1,\cdots ,\ell$, of 
$S^1$ and $\delta_i$, $i =1,\cdots ,\ell$, of $\bbr$. One can amalgamate 
the $C_i$ and the $\delta_i$ into elements $\gamma_i$ of $\bbc^*$; the 
space $GM_1$ is obtained by taking the cylinders $\log(|\gamma_i|)\,\in 
\, [-1/2,1/2]$, and collapsing the boundaries of the cylinders to obtain 
spheres. The fiber over the Jacobian is then a product of spheres, which one quotients symplectically by the circle action. One 
fixes the sum of the $\delta_i$ to minus the degree, then quotients by 
$S^1$; different level spaces give different spaces.

\subsection{The genus zero, two point case.}

This is related to the work of Martens and Thaddeus \cite {MaTh}. Let us consider this first from the symplectic side. In this case, the 
geometric data for the space $EM_n$ simplifies somewhat: one has a flat 
connection $\sqrt{-1}\delta d\theta = -\sqrt{-1}\delta d(-\theta)$ on a 
cylinder, and framings $f_1, f_2$ at each puncture. Assuming that the 
connection is expressed in the basis given by the framing $f_1$, the 
data is simply the Hermitian matrix $\delta$ and a unitary matrix $U$ 
expressing the second framing in terms of the first: $f_2 = U\cdot f_1$. 
Going to the Grassmann framed space, one has elements $g_1,g_2$ of ${\rm
Gr}_n(2n)$ satisfying $\mu_1(g_1) \,=\, \sqrt{-1}\delta$, $\mu_1(g_2) \,=\, 
U(-\sqrt{-1}\delta )U^{-1}$. This is to be considered modulo the 
action of two copies of $U(n)$, one at each puncture. One of these copies simply undoes the action of $U$, and one then has the description of the moduli space as the set of pairs $( g_1, g_2)$, satisfying $\mu_1(g_1) = -\mu_1(g_2) $, modulo $U(n)$; in other words, one has the symplectic quotient 
$$({\rm Gr}_n(2n)\times {\rm Gr}_n(2n))/\!\!/U(n)\, ,$$
 under the diagonal action of $U(n)$.

On the open set of planes that are graphs of linear isomorphisms 
$\gamma_1, \gamma_2$, this gives the 
constraint
$$\delta \,=\, (-\bbi + \gamma_1^*\gamma_1)(\bbi+\gamma_1^* 
\gamma_1)^{-1} \,=\, -(-\bbi + 
\gamma_2^*\gamma_2)(\bbi+\gamma_2^*\gamma_2)^{-1}$$
 with the equivalence
$$(\delta, \gamma_1,\gamma_2) \,\longrightarrow\, (U\delta U^{-1}, 
\gamma_1U^{-1},\gamma_2U^{-1})\, .$$
Decomposing into a product of a positive Hermitian part and a unitary part, one has $\gamma_1 = (\gamma_1\gamma_1^*)^{1/2}\cdot (\gamma_1\gamma_1^*)^{-1/2} \gamma_1$, and one can use the unitary action to normalize 
$\gamma_1$ to a positive Hermitian matrix. One then has that $\gamma_1 = 
(\gamma_1\gamma_1^*)^{1/2}$, and via the relation above, $\gamma_1$ is 
then computable in terms of $\gamma_2$. The open set of the 
moduli space is simply given by the possible choices for the matrix $\gamma_2$, and so is $\GL(n,\bbc)$.

{}From the holomorphic viewpoint, if one restricts to the open set over 
which the bundle is trivial, and for which the planes correspond to the 
graphs of framings, one again finds $\GL(n,\bbc)$: one has a trivial 
bundle $E$, and two invertible linear maps from $H^0({\mathbb P}^1, E) 
= E_p = 
E_q$ to $\bbc^n$. One can use the automorphisms of the bundle to normalize one of the maps to the identity, with the other map giving the element of $\GL(n,\bbc)$.

\subsection{The one point case.}

Let us consider the case of a framing at just one point $p$. The 
degree $\delta_0\in \{-[( n-1)/2]\, ,\cdots\, , [( n-1)/2]\}$. Here 
we have $s_1\leq n/2 - \delta_0$, and $t_1 \leq n/2 + \delta_0$. 
If the base bundle $E$ is stable, the pair $(E,g)$ is also. Over 
the locus of stable (hence simple) bundles, the moduli space 
$\cagm$ has fiber given by the quotient of the Grassmannian by 
$\bbc^*$. This quotient will depend on $\delta_0$; for example, 
if $n$ is odd and $\delta_0 = -( n-1)/2$, then $t_1 = 0, 
s_1<n$; the set of planes is then the set of graphs of non-zero 
maps from $\bbc^n$ to the fiber of the bundle at $p$, and, 
quotienting by $\bbc^*$, one simply gets the projective space 
${\mathbb P}^{n^2-1}$.

\end{document}